\newtheorem{theorem}{Theorem}[section]
\newtheorem{corollary}[theorem]{Corollary}
\newtheorem{lemma}[theorem]{Lemma}
\theoremstyle{definition}
\newtheorem{example}[theorem]{Example}
\theoremstyle{remark}
\numberwithin{equation}{section}
\DeclareMathOperator{\spt}{supp}
\DeclareMathOperator{\divv}{div}
\DeclareMathOperator{\curl}{curl}
\newcommand\R{\ensuremath{\mathbb R}}
\newcommand\N{\ensuremath{\mathbb N}}
\newcommand\map[3]{\ensuremath{{#1}\,\colon \,{#2}\to{#3}}}
\newcommand{\subnorm}[2]{{{\left \|#2\right \|}_{#1}}}
\newcommand{\norm}[1]{{\subnorm{{}}{#1}}}
\newcommand{\tnorm}[1]{{\|#1 \|}}
\newcommand{\norms}[1]{\|#1\|}
\newcommand\dt{\ensuremath{{\,\mathrm{d}t}}}
\newcommand\dx{\ensuremath{{\,\mathrm{d}x}}}
\newcommand\dd{\ensuremath{{\,\mathrm{d}}}}
\newcommand\ddt{\ensuremath{{\frac{\dd}{\dt}\,}}}
\newcommand\scp[2]{{\left({#1},{#2} \right)}}
\newcommand\dup[2]{{\langle{#1},{#2} \rangle}}
\newcommand{\assign}{=}
\newcommand\Lsa{\mathcal{L}^{\mathrm{sa}}}
\renewcommand{\phi}{\varphi}
\renewcommand{\epsilon}{\varepsilon}
\newcommand{\eps}{\epsilon}
\title{Dynamic Inverse Wave Problems -- Part II:\\ Operator Identification and Applications} 
\author{Thies Gerken\thanks{Center for Industrial Mathematics, Universit\"at Bremen, Germany; \texttt{tgerken@math.uni-bremen.de}}}
\date{\today}
\begin{document}

\maketitle

\begin{abstract}
  We present a framework which enables the analysis of dynamic inverse problems for wave phenomena that are modeled through second-order hyperbolic PDEs. 
  This includes well-posedness and regularity results for the forward operator in an abstract setting, where the operators in an evolution equation represent the unknowns. 
  We also prove Fréchet-differentiability and local ill-posedness for this problem.
  We then demonstrate how to apply this theory to actual problems by two example equations motivated by linear elasticity and electrodynamics. 
  For these problems it is even possible to obtain a simple characterization of the adjoint of the Fréchet-derivative of the forward operator, which is of particular interest for the application of regularization schemes. 
\end{abstract}


\section{Introduction}
 
Our main motivation behind this work is the identification of time-dependent quantities that govern wave propagation.
The first example for such a setting is the reconstruction of the wave speed or mass density in a wave equation from measurements of the wave field.
We thereby continue the work done in~\cite{gerken_reconstruction_2017}, where only a zero-order potential was sought. 
To be more precise, the equation under consideration in this situation is 
\begin{equation*}
  \frac{1}{\rho(t,x)} \left(\frac{u'(t,x)}{c(t,x)^2}\right)' - \divv \left( \frac{\nabla u(t,x)}{\rho(t,x)} \right) = f, 
\end{equation*}
together with suitable initial- and boundary conditions. In this setting the right-hand side $f$ is known, and either $c$ or $\rho$ is to be identified. The corresponding problem with static parameters was previously analyzed in~\cite{kirsch_linearization_2014} and~\cite{kirsch_seismic_2014}.

Another scenario of interest can be found in elasticity. Here one can discuss the problem of reconstructing time-dependent Lamé parameters, thus lifting~\cite{lechleiter_identifying_2017} into the world of dynamic inverse problems. 
Both the classic- and the elastic wave equation already share similar theory for existence, uniqueness and regularity of the solution because they can both be written as evolution equations. 
Inspired by~\cite{kirsch_inverse_2016} and~\cite{blazek_mathematical_2013} we also developed a common approach to the analysis of the inverse problems, but based on a second-order formulation and with the strong emphasis on time-dependent parameters. 

The general formulation which we consider throughout this article is the evolution equation 
\begin{equation}\label{eq:intro:abstract}
  \ddt C(t)u'(t) + B(t)u'(t) + A(t)u(t) + Q(t)u(t) = f(t),
\end{equation}
to be solved on a finite time interval $I=(0,T)$ with $T>0$. The unknowns are the linear operators $A, B, C$ and $Q$, and the forward operator of the problem maps them onto the solution $u$ of this equation. In applications one would subsequently apply a measurement operator, which restricts the knowledge of $u$, e.g.\ to boundary data. As long as this operator is linear or at least Fréchet-differentiable, this would not impact the analysis done here. 

We give a short motivation why we chose the operators in~\eqref{eq:intro:abstract}. The operators $A$ and $C$ contain the second-order differential operators in space and time, respectively, and are therefore of particular interest. 
We include another operator $Q$, which can only act on lower spatial derivatives of $u$, like a potential in the wave equation (as in~\cite{gerken_reconstruction_2017}). 
Such an operator might arise from the linearization of a previously semi-linear equation. 
By not combining it with $A$ we can achieve lower regularity assumptions on this part of the equation. 
The operator $B$ is not only valueable to introduce damping into the wave propagation, but also gives more flexibility in the positioning of an unknown parameter between the time derivatives in the highest order term. 
Without it, handling of $u''/\rho$ like in the introductory wave equation would not be possible. 
By including $B$, we can re-write this as $(u'/\rho)' - u' \rho' / \rho^2$. Using the regularity results of~\cite{gerken_dynamic_2018} one can conclude that these two formulations are equivalent.

This article is organized as follows. Section~\ref{section:abstract} contains the abstract framework based on equation~\eqref{eq:intro:abstract}. After establishing a well-defined forward operator defined on an open subset of a Banach space we can analyse its differentiability in Section~\ref{section:abstract:frechet} and try to understand the adjoint of the resulting Fréchet derivative, which is the focus of Section~\ref{section:abstract:adjoint}. We close the abstract theory by showing local ill-posedness of the problem and also ill-posedness of its linearizations. In Sections~\ref{section:elasticity} and~\ref{section:maxwell} we then demonstrate how easy it is to apply this abstract theory to actual PDEs using the elastic wave equation and a model for electrodynamics based on Maxwell's equations as examples. 

\section{Abstract Inversion}%
\label{section:abstract}

Let $V, H$ be separable Hilbert spaces, with a compact and dense embedding $V\hookrightarrow H$. 
Without loss of generality we assume $\norm{\cdot}_H \leq \norm{\cdot}_V$. 
By identifying $H$ with $H^*$, but not doing so with $V$, we obtain a Gelfand triple $V\subset H \subset V^*$. 

First we would like to make a few remarks on our notation. With $W^{k,p}(I; X)$ we denote the usual Bochner space of functions that take values in the Banach space $X$. For their definition we refer to~\cite{zeidler_nonlinear_1985}. If it is not indicated otherwise, then $\scp{\cdot}{\cdot}$ and $\dup{\cdot}{\cdot}$ denote the inner product of $H$ and the dual product of $V^*$ and $V$, respectively. 
Further, we write $\mathcal L(X, Y)$ for the space of linear and continuous operators between normed spaces $X$ and $Y$, with the shorthand notation $\mathcal L(X)$ if $X=Y$. 
For operators belonging to $L^\infty(I; \mathcal L(X,Y))$ we denote their \emph{realization} using calligraphic font, i.e.\ for some $F\in L^\infty(I; \mathcal L(X,Y))$ the operator $\map{\mathcal F}{L^2(I; X)}{L^2(I; Y)}$ is defined by 
\begin{equation*}
(\mathcal F v)(t) = F(t)v(t),
\end{equation*}
which is valid for almost all $t\in I$ if $v\in L^2(I; X)$.

In the remainder of this section we analyze the operator $S$, which maps the operators $A, B, C, Q$ to the solution $u\in L^2(I; V)\cap H^1(I; H)$ of the problem 
\begin{subequations}\label{eq:abstract:problem}
  \begin{gather}
  (\mathcal Cu')'+\mathcal Bu' + (\mathcal A + \mathcal Q) u = f \text{ in } L^2(I; V^*),\\
    u(0)=u_0\text{ in } H,\ (\mathcal C u')(0)=u_1\text{ in } V^*.
  \end{gather}
\end{subequations}
Each of the operators may be time-dependent, and to make the above equations well-defined we require $A\in L^\infty(I; \mathcal L(V, V^*))$, $B\in L^\infty(I; \mathcal L(H))$, $C\in L^{\infty}(I; \mathcal L(H))$ und $Q\in L^\infty(I; \mathcal L(V, H))$.
To ensure that this equation is of hyperbolic type we have to assume $A(t)$ and $B(t)$ to be self-adjoint and coercive, i.e. $\scp{C(t)\phi}{\phi}\geq c_0 \norm{\phi}^2_H$ and $\dup{A(t)\psi}{\psi} \geq a_0 \norm{\psi}^2_V$ for all $\phi\in H, \psi \in V$ and almost all $t\in I$ with constants $a_0, c_0 > 0$.
We note that the case where $A$ only fulfills the weaker Gårding-inequality $\dup{A(t)\psi}{\psi} \geq a_0 \norm{\psi}^2_V - \lambda \norm{\psi}^2_H$ with $\lambda\in\R$ can be remedied by replacing $A$ with $A+\lambda I$ and $Q$ with $Q-\lambda I$.

For the definition of the solution operator to~\eqref{eq:abstract:problem} we need function spaces that capture these restrictions on $A$ and $C$. 
Therefore we define for Hilbert spaces $Z$ the set
\begin{equation*}
  \Lsa(Z, Z^*) \assign \Set{G\in \mathcal L(Z, Z^*) | G^* = G}.
\end{equation*}
Here we identify $Z^{**}$ with $Z$, i.e. $G, G^*\in \mathcal L(Z, Z^*)$. 
Because we also identify $H$ with $H^*$ this also gives rise to $\Lsa(H)$. 
In this way we obtain a closed subspace of $\mathcal L(Z, Z^*)$, i.e. $\Lsa(Z, Z^*)$ is a Banach space when it is equipped with the operator norm. 
The \enquote{natural} set of permissible $A(t)$ and $C(t)$ can then be expressed through the notation 
\begin{equation*}
  \Lsa_\alpha(Z, Z^*) \assign \Set{G\in \Lsa(Z, Z^*) | \dup{Gz}z \geq \alpha \norm{z}^2_Z \text{\ for all $z\in Z$}},
\end{equation*}
where $\alpha$ is a positive constant. 
Conditions for the existence and uniqueness of $u$ then read as follows.

\begin{lemma}\label{lemma:abstract:wellposed}
  Let $A\in W^{1,\infty}(I; \Lsa(V,V^*))$ with $A(t)\in \Lsa_{a_0}(V, V^*)$ and $C\in W^{1,\infty}(I; \Lsa(H))$ with $C(t)\in \Lsa_{c_0}(H)$ for almost all $t\in I$ for some $a_0, c_0>0$. 
  Furthermore assume that $B\in W^{1,\infty}(I; \mathcal L(H))$, $Q\in W^{1,\infty}(I; \mathcal L(V, H))$, $f\in L^2(I; H) \cup H^1(I; V^*)$, $u_0\in V$, and $u_1\in H$. 
  Then there exists a uniquely determined $u\in L^2(I; V)\cap H^1(I; H)$ with $(\mathcal Cu')' \in L^2(I; V^*)$ solving~\eqref{eq:abstract:problem}.
  Furthermore the solution $u$ continuously depends on the data $u_0$, $u_1$ and $f$ as well as on the operators $A$, $B$, $C$ and $Q$, using the natural norms in the spaces above.
\end{lemma}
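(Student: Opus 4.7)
The plan is to use a Faedo--Galerkin construction combined with an energy estimate, tailored to the self-adjointness and coercivity of $A(t)$ and $C(t)$; the $W^{1,\infty}$ regularity of all operator coefficients is what makes the resulting energy identity usable.

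First I would choose a basis $\{w_k\}$ of $V$ that is also total in $H$ (for instance, eigenfunctions of an auxiliary coercive operator in the Gelfand triple) and seek approximate solutions $u_n(t)\in\spann\{w_1,\dots,w_n\}$ solving the projection of~\eqref{eq:abstract:problem} onto this subspace with projected initial data. In coordinates this is a linear second-order ODE whose mass matrix $(\scp{C(t)w_j}{w_k})_{j,k}$ is invertible because $C(t)\in\Lsa_{c_0}(H)$; together with the Lipschitz-in-time regularity of all coefficients, classical linear ODE theory produces a unique $u_n$ on all of $[0,T]$.

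Next, testing the Galerkin equation against $u_n'(t)$ and exploiting the self-adjointness of $A$ and $C$ to rewrite the principal terms as total time derivatives, I obtain
\[
\ddt E_n(t) + \dup{B(t)u_n'}{u_n'} = -\tfrac12\dup{C'(t)u_n'}{u_n'}+\tfrac12\dup{A'(t)u_n}{u_n}-\dup{Q(t)u_n}{u_n'}+\dup{f(t)}{u_n'}
\]
with $E_n(t)=\tfrac12\dup{C(t)u_n'(t)}{u_n'(t)}+\tfrac12\dup{A(t)u_n(t)}{u_n(t)}$. The $W^{1,\infty}$-bounds on $A,B,C,Q$ together with the coercivity of $A$ and $C$ control the right-hand side by $c(1+E_n(t))$ plus the $f$-contribution; the latter is absorbed directly when $f\in L^2(I;H)$, and by first rewriting $\dup{f}{u_n'}=\ddt\dup{f}{u_n}-\dup{f'}{u_n}$ when $f\in H^1(I;V^*)$. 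Gr\"onwall's inequality then yields, uniformly in $n$, bounds for $u_n$ in $L^\infty(I;V)$ and for $u_n'$ in $L^\infty(I;H)$. I would then extract a weakly-$*$ convergent subsequence, verify that the limit $u$ is a weak solution of~\eqref{eq:abstract:problem}, read off $(\mathcal Cu')'\in L^2(I;V^*)$ from the equation itself, and recover the initial conditions through a continuity-in-time argument. Uniqueness and continuous dependence on $(u_0,u_1,f,A,B,C,Q)$ follow from applying the same energy identity to the difference of two solutions: discrepancies like $(\mathcal A_1-\mathcal A_2)u_2$ and $(\mathcal C_1-\mathcal C_2)u_2'$ enter as additional source terms controlled by the a priori bound on the reference solution $u_2$.

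The principal obstacle is making the above energy identity rigorous for the limit $u$, since the natural regularity $u\in L^2(I;V)\cap H^1(I;H)$ with $(\mathcal Cu')'\in L^2(I;V^*)$ does not directly produce a valid duality pairing $\dup{(\mathcal Cu')'}{u'}$: one only has $u'\in L^2(I;H)$, not $u'\in L^2(I;V)$. What is needed is a time-dependent analogue of the Lions--Magenes integration-by-parts formula,
\[
\int_0^t \dup{(\mathcal Cu')'}{u'}\ds = \tfrac12\dup{C(t)u'(t)}{u'(t)}-\tfrac12\dup{C(0)u'(0)}{u'(0)}-\tfrac12\int_0^t\dup{C'u'}{u'}\ds,
\]
and the assumption $C\in W^{1,\infty}(I;\Lsa(H))$ is precisely what allows one to verify this identity first for Galerkin approximants and then pass to the limit by density. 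This piece of abstract regularity theory for the Gelfand triple $V\subset H\subset V^*$ carries the bulk of the technical weight, and is where I would expect the companion results of~\cite{gerken_dynamic_2018} to come in.
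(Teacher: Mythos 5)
Your proposal is the standard Faedo--Galerkin construction with a Gr\"onwall energy estimate, which is exactly the proof the paper delegates to its cited references (Lions--Magenes and Zeidler), so the approach matches; you also correctly flag the one genuinely delicate point, namely justifying the pairing $\dup{(\mathcal Cu')'}{u'}$ for the limit solution via a time-dependent integration-by-parts identity. One small slip: in your final display the term $\tfrac12\int_0^t\dup{C'u'}{u'}\ds$ should carry a plus sign (your own differential energy identity, which correctly places $-\tfrac12\dup{C'(t)u_n'}{u_n'}$ on the right-hand side, already implies this), but the sign is immaterial since that term is lower order and controlled by $\norm{C}_{W^{1,\infty}(I;\Lsa(H))}$ either way.
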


\begin{proof}
  See e.g.~\cite{lions_non-homogeneous_1972} or~\cite{zeidler_nonlinear_1985}.
\end{proof}

If $Q$ does not represent a first-order differential operator, i.e. $Q(t)\in \mathcal L(H)$, then the differentiability assumption on $Q$ in the theorem can be dropped and therefore $Q\in L^\infty(I; \mathcal L(H))$ would suffice.

For a proper analysis of the differentiability of $S$ we need the operator to be defined on an open subset of a Banach space. 
Unfortunately, the sets of all $A$ and $C$ that satisfy the coercivity constraint are not open because $\Lsa_\alpha(Z, Z^*)$ is closed.
The interior of this set is not obtained by simply using \enquote{$>$} instead of \enquote{$\geq$} in its definition (and restrict the condition to  $z\neq 0$) because this set is also not open. 
We would like to demonstrate this by a simple example. 

\begin{example}
  Let $\alpha\in [0,1)$ and $Z \assign L^2([\alpha, 1]; \R)$. 
  We define $F\in \Lsa(Z)$ for $v\in Z$ through
  \begin{equation*}
    F(v) = (x\mapsto x v(x)) \in Z,
  \end{equation*}
  i.e. $\scp{F(v)}v = \int_\alpha^1 x v(x)^2 \dx > \alpha \norm{v}_Z^2$ for all $v\neq 0$, in particular $F\in \Lsa_\alpha(Z)$. 
  We also define the family of operators
  \begin{equation*}
    F_\epsilon(v) \assign (x\mapsto (x-\epsilon) v(x)).
  \end{equation*}
  with $\eps>0$. 
  We denote by $1_\Omega$ the characteristic function of $\Omega$, see that 
  \begin{equation*}
    \scp{F_\eps(1_{[\alpha, \alpha+\eps]})}{1_{[\alpha, \alpha+\eps]}} = \int_\alpha^{\alpha+\eps} x-\eps \dx = (\alpha - \eps/2) \eps < \eps = \norm{1_{[\alpha, \alpha+\eps]}}^2_Z
  \end{equation*}
  and conclude $F_\epsilon \notin \Lsa_\alpha(Z)$. 
  But $\norm{F-F_\epsilon}_{\mathcal L(Z)} \leq \epsilon$, so $F_\epsilon \to F$ when $\eps\to 0$. 
  Therefore $F$ does not belong to the interior of $\Lsa_\alpha(Z)$.
\end{example}

Nevertheless, the interior of $\Lsa_\alpha(Z, Z^*)$ is not empty, and we can give a short formula for it.

\begin{lemma}\label{lemma:abstract:lsa_interior}
  The interior of $\Lsa_\alpha(Z, Z^*)$ is given by
  \begin{equation*}
    {\Lsa_\alpha(Z, Z^*)}^\circ = \bigcup_{\epsilon > 0} \Lsa_{\alpha+\epsilon}(Z, Z^*).
  \end{equation*}
\end{lemma}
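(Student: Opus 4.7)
The plan is to prove the two inclusions separately; both directions are essentially one-line perturbation arguments once the right comparison operator has been identified.

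For the inclusion $\bigcup_{\epsilon>0}\Lsa_{\alpha+\epsilon}(Z,Z^*)\subseteq {\Lsa_\alpha(Z,Z^*)}^\circ$, I would fix $G\in \Lsa_{\alpha+\epsilon}(Z,Z^*)$ and consider an arbitrary $\tilde G\in \Lsa(Z,Z^*)$ with $\|\tilde G-G\|_{\mathcal L(Z,Z^*)}<\epsilon$. Adding and subtracting $G$ gives
\begin{equation*}
  \dup{\tilde G z}{z} = \dup{Gz}{z} + \dup{(\tilde G-G)z}{z} \geq (\alpha+\epsilon)\norm{z}_Z^2 - \|\tilde G-G\|\,\norm{z}_Z^2 \geq \alpha\norm{z}_Z^2,
\end{equation*}
so the whole $\epsilon$-ball around $G$ in $\Lsa(Z,Z^*)$ is contained in $\Lsa_\alpha(Z,Z^*)$, which makes $G$ an interior point.

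The reverse inclusion requires producing a \emph{quantitative} coercivity margin from the mere fact that $G$ is interior. The natural tool is the Riesz isomorphism $\map{J}{Z}{Z^*}$ defined by $\dup{Jz}{w}=\scp{z}{w}_Z$; it satisfies $J\in\Lsa(Z,Z^*)$ (because $\dup{J^*z}{w}=\dup{Jw}{z}=\scp{w}{z}_Z=\scp{z}{w}_Z$), has operator norm one, and obeys $\dup{Jz}{z}=\norm{z}_Z^2$. Given $G\in{\Lsa_\alpha(Z,Z^*)}^\circ$, I would choose $\delta>0$ such that every element of $\Lsa(Z,Z^*)$ within distance $\delta$ of $G$ lies in $\Lsa_\alpha(Z,Z^*)$, and then test this with the admissible perturbation $\tilde G \assign G-(\delta/2)J$, which satisfies $\|\tilde G-G\|\leq\delta/2<\delta$ and $\tilde G\in \Lsa(Z,Z^*)$. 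Coercivity of $\tilde G$ then yields
\begin{equation*}
  \dup{Gz}{z} - (\delta/2)\norm{z}_Z^2 = \dup{\tilde G z}{z} \geq \alpha\norm{z}_Z^2
\end{equation*}
for every $z\in Z$, i.e.\ $G\in\Lsa_{\alpha+\delta/2}(Z,Z^*)$, completing the argument.

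The only genuinely nontrivial step is the second inclusion: because $\Lsa_\alpha(Z,Z^*)$ is a subset of a Banach space of operators rather than of scalar coercivity constants, one needs a specific self-adjoint perturbation in $\Lsa(Z,Z^*)$ whose quadratic form is uniformly bounded below by $\norm{z}_Z^2$. The Riesz map is the canonical such object, and once it is introduced the rest is a direct estimate.
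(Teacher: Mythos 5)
Your proof is correct and matches the paper's argument in substance: the first inclusion is verified by the same ball-perturbation estimate, and for the reverse inclusion you use the same key perturbation $G - c\, I_{Z\to Z^*}$ (your Riesz map $J$ is exactly the paper's canonical embedding of $Z$ into $Z^*$), only phrased as a direct implication rather than the paper's contrapositive. Your direct version is in fact slightly cleaner, since it avoids having to extract a witnessing sequence $(v_k)$ with $\dup{Gv_k}{v_k}$ close to $\alpha\norm{v_k}_Z^2$ and then checking that the perturbed operators fail coercivity at those $v_k$.
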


\begin{proof}
  We set $M=\bigcup_{\epsilon > 0} \Lsa_{\alpha+\epsilon}(Z, Z^*)$ and show that it is the biggest open subset of $\Lsa_\alpha(Z, Z^*)$. 
  It is obvious that $M\subset \Lsa_\alpha(Z, Z^*)$. 
  We continue by proving that $M$ is open. 
  Let $G\in M$, which means there is $\epsilon_0>0$ such that $G\in \Lsa_{\alpha+\epsilon_0}(Z, Z^*)$. 
  For every $F\in B(G, \sfrac{\epsilon_0}2)$ (ball around $G$ with respect to the operator norm) and $v\in Z$ we have
  \begin{equation*}
    \dup{Fv}v = \dup{Gv}v + \dup{(F-G)v}v \geq (\alpha+\epsilon_0) \norm{v}^2 - \norm{F-G} \norm{v}^2 \geq (\alpha+\sfrac{\epsilon_0}2) \norm{v}^2,
  \end{equation*}
  which means that $F\in \Lsa_{\alpha+\sfrac{\epsilon_0}2}(Z, Z^*) \subset M$.
  As a last step we show that every $G \in \Lsa_\alpha(Z, Z^*) \setminus M$ can be approximated by operators that belong to $\Lsa(Z, Z^*)\setminus \Lsa_\alpha(Z, Z^*)$.
  Since $G\notin M$ there exists a sequence $(v_k)_{k\in\N}\subset Z$ with $\dup{Gv_k}{v_k} = (\alpha+\sfrac 1{(2k)})\norm{v_k}^2$.
  We set $G_k = G - \sfrac1k \, I_{Z\to Z^*}$, where $I_{Z\to Z^*}$ denotes the canonical embedding of the hilbert space $Z$ in its dual space. 
  It is easy to verify that $G_k \to G$ for $k\to\infty$ as well as $\dup{G_k v_k}{v_k} = (\alpha - \sfrac 1{(2k)}) \norm{v_k}^2$, so none of the $G_k$ belongs to $\Lsa_{\alpha}(Z, Z^*)$.
\end{proof}

We conclude that $\map S{D(S)\subset X}Y$ is well-defined when we fix $f, u_0$ and $u_1$ and make the definitions
\begin{align}\label{eq:abstract:space_x}
  X &= W^{1,\infty}(I; \Lsa(V,V^*)) \times W^{1,\infty}(I; \mathcal L(H)) \times W^{1,\infty}(I; \Lsa(H)) \times W^{1,\infty}(I; \mathcal L(V,H)), \\
  D(S) &= \Big \{(A, B, C, Q)\in X\ |\ A(t)\in \Lsa_{a_0+\epsilon}(V, V^*) \text{ and } C(t)\in\Lsa_{c_0+\epsilon}(H) \\
  &\hspace{10em} \text{ for almost all $t\in I$ for some $\epsilon > 0$} \Big \} \text{, and} \notag \\
  Y &= L^\infty(I; V)\cap W^{1,\infty}(I; H).
\end{align}
Again, in the case $Q(t)\in \mathcal L(H)$ we could omit the differentiability assumption on $Q$ in the definition of $X$.
The operator $S$ as given above is defined on an open subset of a Banach space and maps into another Banach space.
However, we will see that for this $S$ we are not able to show Fréchet-differentiability in $A$ or $C$. 
As we will see, this is due to a lack of regularity in $u=S(A, B, C, Q)$. 
Hence, we state the regularity result from~\cite{gerken_dynamic_2018} that will provide the required smoothness. 

For $k\geq 0$ let $u_{k+2}$ be given via
\begin{align*}
  C(0)u_{k+2} &= f^{(k)}(0) - ((k+1) C'(0) + B(0)) u_{k+1}\\
  &\ \ - \sum_{j=0}^{k} \left[ {k \choose j} (A^{(j)}(0) + Q^{(j)}(0)) + {k \choose j+1} B^{(j+1)}(0) +  {k+1 \choose j+2} C^{(j+2)}(0)\right] u_{k-j}. \notag
\end{align*}
Due to $C\in W^{1,\infty}(I; \Lsa(H))$ we know that $C$ is continuous, and from its coercivity we conclude that $C(0)$ is invertible. 
Therefore $u_{k+2}$ is well-defined as long as the right-hand side of the above equation is an element of $H$. 
With this notation we get the following result. 

\begin{theorem}\label{theorem:abstract:regularity}
    Let $k\in\N$ and suppose that $A\in W^{k+1,\infty}(I; \Lsa(V,V^*))$ with $A(t)\in \Lsa_{a_0}(V, V^*)$ and $C\in W^{k+1,\infty}(I; \Lsa(H))$ with $C(t)\in \Lsa_{c_0}(H)$ for almost all $t\in I$ and for some $a_0, c_0>0$. 
    Furthermore let $Q\in W^{k,\infty}(I; \mathcal L(V,H))$, $B\in W^{k,\infty}(I; \mathcal L(H))$,
    $f\in H^k(I; H)\cup H^{k+1}(I; V^*)$, $u_j\in V$ ($j=0,\dots, k$) and $u_{k+1}\in H$ be fulfilled.
    Then the unique solution $u$ of problem~\eqref{eq:abstract:problem} lies in $H^k(I; V)\cap H^{k+1}(I; H)$ with $(\mathcal C u^{(k+1)})' \in L^2(I; V^*)$ and satisfies the energy estimate
    \begin{equation*}
      \norm{u}_{W^{k,\infty}(I; V)}^2 + \tnorm{u^{(k+1)}}_{L^{\infty}(I; H)}^2 \leq \Lambda \left(\sum_{j=0}^k \norm{u_j}_V^2 + \norm{u_{k+1}}_H^2 + \norm{f}^2 \right)
    \end{equation*}
    where $f$ is measured in either the $H^k(I; H)$- or the $H^{k+1}(I; V^*)$ norm and $\Lambda=\Lambda(k)$ is a constant depending continuously on $1/{c_0}$, $1/{a_0}$, $T$ and the operators $A, B, C, Q$, measured in the spaces above.
\end{theorem}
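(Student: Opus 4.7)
The natural strategy is induction on $k$. The base case $k=0$ is precisely Lemma~\ref{lemma:abstract:wellposed}; the upgrade from the $L^2$-in-time bound stated there to the $L^\infty$-in-time bound advertised here is the standard output of the hyperbolic energy identity (test the equation with $u'$, integrate in $t$, apply Grönwall), and is already built into the references cited for that lemma.

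For the inductive step, assuming the statement at order $k-1$, the plan is to formally differentiate the evolution equation $k$ times in $t$ and set $v\assign u^{(k)}$. The Leibniz rule then yields an equation of the shape
\begin{equation*}
    (\mathcal Cv')'+(\mathcal B+k\mathcal C')v'+(\mathcal A+\mathcal Q)v = f^{(k)} - g,
\end{equation*}
where $g$ is a finite sum whose summands take the form $c_{j,\ell}F^{(j)}u^{(\ell)}$ with binomial coefficients $c_{j,\ell}$, $F\in\{A,B,C,Q\}$, $\ell\leq k-1$ and total order $j+\ell\leq k+1$. The assumed $W^{k+1,\infty}$/$W^{k,\infty}$-regularity of the coefficients together with the inductive hypothesis (which provides $u^{(\ell)}\in L^\infty(I;V)\cap W^{1,\infty}(I;H)$ for $\ell\leq k-1$) force $g\in L^\infty(I;H)\subset L^2(I;H)$, with a norm controlled by exactly the quantities appearing on the right-hand side of the desired energy estimate. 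The new first-order operator $\mathcal B+k\mathcal C'$ still lies in $W^{1,\infty}(I;\mathcal L(H))$ because $C\in W^{k+1,\infty}(I;\Lsa(H))$ and $k\geq 1$, the coercivity of $A$ and $C$ is untouched, and the initial data come out as $v(0)=u_k\in V$ and $(\mathcal Cv')(0)=C(0)u_{k+1}\in H$. Lemma~\ref{lemma:abstract:wellposed} is therefore applicable to the equation for $v$ and produces $u^{(k)}\in L^2(I;V)\cap H^1(I;H)$, which together with the inductive hypothesis yields $u\in H^k(I;V)\cap H^{k+1}(I;H)$; the $L^\infty$-in-time control needed for the final energy estimate is obtained from the hyperbolic energy identity applied to the shifted equation, and the chain of constants produced in this way depends continuously on $1/a_0,1/c_0,T$ and the coefficient norms.

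Two consistency checks round off the argument. The recursive prescription of $u_{k+2}$ given in the statement is exactly what one obtains by writing out the $k$-times differentiated evolution equation at $t=0$, isolating the highest-order term and inverting $C(0)$ (which is permissible by continuity of $C$ and coercivity), so the compatibility of initial data with the shifted equation at each induction level is automatic. In the alternative case $f\in H^{k+1}(I;V^*)$ one reinterprets $f^{(k)}$ as an element of $H^1(I;V^*)$ and uses the corresponding variant of Lemma~\ref{lemma:abstract:wellposed}.

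The principal obstacle is not conceptual but technical: keeping the Leibniz bookkeeping tidy enough that $g$ is actually estimated only in terms of the norms appearing on the right-hand side of the final inequality (so that no power of the data is lost at each step of the induction), and rigorously producing the $L^\infty$-in-time bound from what Lemma~\ref{lemma:abstract:wellposed} formally supplies as an $L^2$-in-time bound. A fully detailed execution of exactly this plan is carried out in~\cite{gerken_dynamic_2018}, to which the technical verification can be deferred.
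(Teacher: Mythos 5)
The paper does not prove this theorem at all: it is imported verbatim from the companion paper \cite{gerken_dynamic_2018} (``we state the regularity result from [that reference]''), so your closing deferral to that reference is in fact exactly what the paper does, and your induction-on-$k$ sketch is the standard argument that the recursive definition of $u_{k+2}$ in the text is clearly set up for. Your bookkeeping of the differentiated equation is consistent with the paper's formula for $u_{k+2}$ (in particular the damping term $\mathcal B + k\mathcal C'$ and the coefficient $(k+1)C'(0)+B(0)$ of $u_{k+1}$ match).

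One step of your sketch is, however, genuinely wrong as stated and happens to be where the real technical difficulty sits. You claim $g\in L^\infty(I;H)$, but the summands $c_{j,\ell}A^{(j)}u^{(\ell)}$ only take values in $V^*$, not in $H$, since $A^{(j)}(t)\in\mathcal L(V,V^*)$; this is independent of whether $f$ lies in $H^k(I;H)$ or $H^{k+1}(I;V^*)$, so these terms cannot be absorbed into the ``alternative case'' as you suggest. To invoke \Cref{lemma:abstract:wellposed} for the shifted equation one must instead show that these $V^*$-valued terms lie in $H^1(I;V^*)$, and differentiating $A'u^{(k-1)}$ once produces $A'u^{(k)}$, where $u^{(k)}$ is at that stage of the induction only controlled in $H$ — so the needed regularity is not supplied by the inductive hypothesis and a more careful argument (the reason $A$ is assumed in $W^{k+1,\infty}$ rather than $W^{k,\infty}$, and the content of the detailed proof in \cite{gerken_dynamic_2018}) is required. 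Since you explicitly defer the technical verification to that reference, the proposal is acceptable as a sketch, but the assertion ``$g\in L^\infty(I;H)$'' should be corrected rather than presented as a routine consequence of the Leibniz rule.
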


The compatibility conditions $u_j\in V$ for $j=0,\dots, k$ and $u_{k+1}\in H$ read 
\begin{equation*}
  u_0,\, u_1 \in V,\ C(0) u_2 = f(0) - [C'(0)+B(0)]u_1 - [A(0)+Q(0)] u_0 \in H 
\end{equation*}
in the case $k=1$.
%
%
They encode \enquote{spatial} regularity of the operators and its time derivatives at the initial time and are (in general) nonlinear in the tupel $(A, B, C, Q)$. 
They can be linearized by making suitable additional assumptions on the operators and the data for the evolution equation, thus enabling them to be incorporated in the Banach space $X$. 
In this article we opt for the simplest solution by requiring homogeneous initial values $u_0=u_1=0$, $f^{(j)}(0) =0$ for $j=0, \dots, k-2$ and $f^{(k-1)}(0)\in H$, thereby avoiding any additional constraints in the space $X$. 
This allows for an easier notation, but any linear constraints that enforce the compatibility conditions would result in a similar analysis. 
Under these assumptions on $f$ and vanishing initial values we can also view $S$ for all $k\geq 1$ as the operator $\map S{D(S)\cap X^{(k)}\subset X^{(k)}}{Y^{(k)}}$ with
\begin{align}
  X^{(k)} &= W^{k+1,\infty}(I; \Lsa(V,V^*)) \times W^{k,\infty}(I; \mathcal L(H)) \times W^{k+1,\infty}(I; \Lsa(H)) \times W^{k,\infty}(I; \mathcal L(V,H)),\label{eq:abstract:space_xk}\\
  Y^{(k)} &= W^{k,\infty}(I; V)\cap W^{k+1,\infty}(I; H).
\end{align}
We extend this to $k=0$ by setting $X^{(0)}=X$ and $Y^{(0)}=Y$. 
To avoid having to repeat the conditions that $f$ has to fulfill in every assertion we define the set 
\begin{align*}
  \mathcal F^{(k)} = \Big \{ f\in H^k(I; H)\cup H^{k+1}(I; V^*) \ \Big|\ &\text{$f^{(k-1)}(0)\in H$ if $k\geq 1$ and} \\
  &\text{$f^{(j)}(0) = 0$ for all $j=0, \dots, k-2$ if $k\geq 2$} \Big \}
\end{align*}
of admissible right-hand sides.


\subsection{Fréchet-Differentiability}%
\label{section:abstract:frechet}

A formal application of the product rule shows that e.g. $u_h = \partial_A S(p)[h]$ should solve the same (linear) equation as $u$, but with the right-hand side $t\mapsto -h(t)[u(t)]$, where $u=S(p)$ is the solution of the direct problem. 
The same argument can be made for the other operators. 
Therefore we make the hypothesis that for each \enquote{symbol} $x\in \{A, B, C, Q\}$ the derivative $u_h = \partial_x S(p)[h]$ solves for each $p=(A, B, C, Q)\in D(S)$ the equation 
\begin{equation}\label{eq:abstract:linearized_general}
  (\mathcal Cu_h')'+\mathcal Bu_h' + (\mathcal A + \mathcal Q) u_h = g_x(u)[h]
\end{equation}
in $L^2(I; V^*)$ and possesses homogeneous initial values. 
The form of the right-hand side depends on the direction of the derivative and is given by  
\begin{alignat*}{3}
  g_A(v)[H] &= -\mathcal H[v] = - H(\cdot)[v(\cdot)], \qquad && g_C(v)[H] = -(\mathcal H[v'])', \\
  g_B(v)[H] &= -\mathcal H[v'], \qquad && g_Q(v)[H] = -\mathcal H[v].
\end{alignat*}
The right-hand sides for $A$ and $Q$ are the same, but $g_A$ and $g_Q$ will map between different spaces. 
We have to ensure that $g_x$ maps either into $L^2(I; H)$ or $H^1(I; V^*)$ in order to use \Cref{lemma:abstract:wellposed} to conclude that a unique solution $u_h\in Y^{(0)}$ of~\eqref{eq:abstract:linearized_general} exists. 
The natural choice of domains and ranges for the $g_x$ that facilitate this are 
\begin{align*}
  &\map {g_A(\cdot)[\cdot]}{H^1(I; V)\times W^{1,\infty}(I; \mathcal L(V, V^*))}{H^1(I; V^*)}, \\
  &\map {g_B(\cdot)[\cdot]}{H^1(I; H)\times L^\infty(I; \mathcal L(H))}{L^2(I; H)}, \\
  &\map {g_C(\cdot)[\cdot]}{H^2(I; H)\times W^{1,\infty}(I; \mathcal L(H))}{L^2(I; H)}, \\
  &\map {g_Q(\cdot)[\cdot]}{L^2(I; V)\times L^\infty(I; \mathcal L(V, H))}{L^2(I; H)}.
\end{align*}
This way we obtain continuous bilinear forms, e.g. 
\begin{equation*}
    g_A(\cdot)[\cdot] \in \mathcal L(H^1(I; V),\ \mathcal L(W^{1,\infty}(I; \mathcal L(V, V^*)),\ H^1(I; V^*)))
\end{equation*}
and can already deduce that $u\in Y^{(0)}$ is not enough to apply $g_A$ or $g_C$ to it. 
In these cases we need at least $u\in Y^{(1)}$ to make $u_h$ well-defined. 
If we also want to ensure higher regularity of $u_h$, then we have to use the continuous bilinear forms
\begin{align*}
  &\map {g_A(\cdot)[\cdot]}{H^{k+1}(I; V)\times W^{k+1,\infty}(I; \mathcal L(V, V^*))}{H^{k+1}(I; V^*)}, \\
  &\map {g_B(\cdot)[\cdot]}{H^{k+1}(I; H)\times W^{k,\infty}(I; \mathcal L(H))}{H^k(I; H)}, \\
  &\map {g_C(\cdot)[\cdot]}{H^{k+2}(I; H)\times W^{k+1,\infty}(I; \mathcal L(H))}{H^k(I; H)}, \\
  &\map {g_Q(\cdot)[\cdot]}{H^k(I; V)\times W^{k,\infty}(I; \mathcal L(V, H))}{H^k(I; H)},
\end{align*}
resulting in $u_h\in Y^{(k)}$, as long as the operators on the left-hand side of~\eqref{eq:abstract:linearized_general} belong to $X^{(k)}$.
This discussion only yields the existence of $u_h$. 
For the proof that $u_h$ indeed describes the Fréchet-derivative of $S$ we need another ingredient, namely that $S$ is locally Lipschitz continuous.

\begin{theorem}\label{theorem:abstract:lipschitz}
  Let $k\in \N\cup \{0\}$ and $f\in \mathcal F^{(k)}$. 
  If $k\neq 0$ then we also assume $u_0=u_1=0$. 
  Then 
  \begin{enumerate}[(i)]
    \item the map $\map S{D(S) \cap X^{(k)}}{Y^{(k)}}$ is locally Lipschitz continuous in the arguments $B$ and $Q$,
    \item for $k\geq 1$ the map $\map S{D(S) \cap X^{(k)}}{Y^{(k-1)}}$ is locally Lipschitz continuous. 
  \end{enumerate}
\end{theorem}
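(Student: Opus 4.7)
The natural strategy is to bound the difference $w := S(p_1) - S(p_2)$ of two solutions by an energy estimate applied to the linear hyperbolic equation it satisfies. Writing $p_i = (A_i, B_i, C_i, Q_i)$ and $u_i = S(p_i)$ and subtracting the two copies of~\eqref{eq:abstract:problem} after adding and subtracting $\mathcal C_1 u_2'$, $\mathcal B_1 u_2'$, $\mathcal A_1 u_2$ and $\mathcal Q_1 u_2$ produces
\begin{equation*}
  (\mathcal C_1 w')' + \mathcal B_1 w' + (\mathcal A_1 + \mathcal Q_1) w = R_{p_1,p_2}(u_2)
\end{equation*}
with homogeneous initial data and
\begin{equation*}
  R_{p_1,p_2}(u_2) := -((\mathcal C_1 - \mathcal C_2) u_2')' - (\mathcal B_1 - \mathcal B_2) u_2' - (\mathcal A_1 - \mathcal A_2) u_2 - (\mathcal Q_1 - \mathcal Q_2) u_2.
\end{equation*}
My plan is to apply \Cref{lemma:abstract:wellposed} (for $k=0$) or \Cref{theorem:abstract:regularity} (for $k\geq 1$) to this equation, treating $R_{p_1,p_2}(u_2)$ as the right-hand side, and to combine the resulting estimate with an a priori bound on $u_2$ coming from the same results applied to $p_2$.

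For part (i) one has $A_1 = A_2$ and $C_1 = C_2$, so only the two lower-order terms of $R$ survive. A Leibniz expansion, together with $B_i \in W^{k,\infty}(I;\mathcal L(H))$, $Q_i \in W^{k,\infty}(I;\mathcal L(V,H))$ and the regularity $u_2 \in Y^{(k)}$ furnished by \Cref{theorem:abstract:regularity}, shows that $R_{p_1,p_2}(u_2) \in H^k(I; H)$, with norm controlled by
\begin{equation*}
  \bigl(\|B_1 - B_2\|_{W^{k,\infty}(I;\mathcal L(H))} + \|Q_1 - Q_2\|_{W^{k,\infty}(I;\mathcal L(V,H))}\bigr)\cdot \|u_2\|_{Y^{(k)}}.
\end{equation*}
For part (ii) all four differences in $R$ contribute; the only one with reduced regularity is $((\mathcal C_1 - \mathcal C_2) u_2')'$, which lies merely in $H^{k-1}(I; H)$ because after expanding the outer derivative we need $u_2'' \in H^{k-1}(I; H)$, i.e.\ $u_2 \in Y^{(k)}$. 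This extra derivative is precisely the source of the loss of one regularity order in the target space.

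Before invoking the energy estimates I still have to verify that $R_{p_1,p_2}(u_2) \in \mathcal F^{(k)}$ for part (i), respectively $\mathcal F^{(k-1)}$ for part (ii), i.e.\ that the relevant initial-time derivatives vanish. This follows by iterating the recursion for the $u_j$ preceding \Cref{theorem:abstract:regularity}: from $u_0 = u_1 = 0$ and the vanishing $f^{(j)}(0) = 0$ for $j = 0,\dots,k-2$ one obtains $u_2^{(m)}(0) = 0$ in $V$ or $H$ for all $m \leq k$, which is exactly what a Leibniz expansion of $R$ at $t=0$ requires. The main obstacle will then be uniformity: both the constant $\Lambda$ from the energy estimate for $w$ and the bound $\|u_2\|_{Y^{(k)}}$ must be controlled uniformly on a neighborhood of a fixed $p_0 \in D(S)\cap X^{(k)}$. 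Using the openness of $D(S)$ provided by \Cref{lemma:abstract:lsa_interior}, I shrink the neighborhood so that $A_i(t) \in \Lsa_{a_0/2}(V,V^*)$ and $C_i(t) \in \Lsa_{c_0/2}(H)$ with common coercivity constants and so that the operator norms of $A_i, B_i, C_i, Q_i$ stay bounded; the continuous dependence of $\Lambda$ on these data then yields a uniform energy estimate, closing the argument.
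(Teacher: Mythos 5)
Your proposal is correct and follows essentially the same route as the paper: subtract the two copies of the evolution equation, observe that the difference solves the linear problem with a right-hand side formed by the operator differences applied to one of the solutions, and close with the energy estimates of \Cref{theorem:abstract:regularity}, with the $((\mathcal C_1-\mathcal C_2)u_2')'$ term correctly identified as the source of the one-order loss in (ii). The only cosmetic difference is that you perturb all four arguments at once, whereas the paper varies one argument at a time and then combines the per-argument Lipschitz constants; your explicit checks of the compatibility conditions for the difference equation and of the uniformity of the constants are details the paper leaves implicit.
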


\begin{proof}\hfill
  \begin{enumerate}[(i)]
    \item The proofs for $Q$ and $B$ are similar, therefore we demonstrate it using $Q$. 
    Let $p=(A, B, C, Q)$, $p^+=(A, B, C, Q^+) \in D(S)\cap X^{(k)}$, $u=S(p)$ and $u^+=S(p^+)$. 
    By subtracting the equations that are solved by $u$ and $u^+$ we conclude that $w\assign u^+-u$ solves
    \begin{equation*}
      (\mathcal Cw')'+\mathcal Bw' + (\mathcal A + \mathcal Q) w = g_Q(u^+)[Q-Q^+]
    \end{equation*}
    in $L^2(I; V^*)$ and possesses homogeneous initial conditions. 
    \Cref{theorem:abstract:regularity} shows that $w$ fulfills the energy estimate 
    \begin{align*}
      \norm{w}_{Y^{(k)}} &\leq \lambda_Q \norm{g_Q(u^+)[Q-Q^+]}_{H^k(I; H)} \leq  \lambda_Q \norm{u^+}_{H^k(I; V)} \norm{Q-Q^+}_{W^{k,\infty}(I; \mathcal L(V, H))} \\
      &\leq  \lambda_Q \lambda_{Q^+} \norm{f} \norm{Q-Q^+}_{W^{k,\infty}(I; \mathcal L(V, H))},
    \end{align*}
    with $\lambda_Q, \lambda_{Q^+}>0$ depending continuously not only on $Q$ and $Q^+$, respectively, but also on the other operators measured in $X^{(k)}$.
    \item If we start the same way with $C$, then we have to be mindful of the initial conditions because they depend on $C$ and $C^+$ ($(\mathcal Cu')(0)=(\mathcal C^+ (u^+)')(0) = 0$). 
    Due to $k\geq 1$ we have $u^+\in H^2(I; H)$, i.e. $(u^+)'$ is continuous (taking values in $H$) and therefore $(\mathcal C(u^+)')(0)=0$ and $(\mathcal Cw')(0) = 0$ also in this case. 
    Energy estimates for $w$ then show 
    \begin{align*}
      \norm{w}_{Y^{(k-1)}} &\leq \lambda_C^{(k-1)} \norm{g_C(u^+)[C-C^+]}_{H^{k-1}(I; H)} \\
      &\leq  \lambda_C^{(k-1)} \norm{u^+}_{H^{k+1}(I; H)} \norm{C-C^+}_{W^{k,\infty}(I; \Lsa(H))} \\
      &\leq  \lambda_C^{(k-1)} \lambda_{C^+}^{(k)} \norm{f} \norm{C-C^+}_{W^{k,\infty}(I; \Lsa(H))},
    \end{align*}
    with constants $\lambda_C^{(k-1)}, \lambda_{C^+}^{(k)} > 0$ depending continuosly on the operators in the $X^{(k-1)}$ and $X^{(k)}$-norm, respectively.
    Estimates for $A$ can be derived in the same fashion. 
    There we have to use $\norm{w}_{Y^{(k-1)}} \leq  \lambda_A^{(k-1)} \lambda_{A^+}^{(k)} \norm{f} \norm{A-A^+}_{W^{k,\infty}(I; \Lsa(V, V^*))}$.
    %
  
    We conclude that $\map S{D(S) \cap X^{(k)}}{Y^{(k-1)}}$ is locally Lipschitz continuous in all arguments with constants that also depend continuously on the other arguments. 
    Therefore the whole map is locally Lipschitz continuous as well.\qedhere
  \end{enumerate}
\end{proof}

Now we can apply this theorem to show differentiability of $S$ in each argument. 

\begin{theorem}\label{theorem:abstract:frechet_partial}
  Let $k\in \N\cup \{0\}$ and $f\in \mathcal F^{(k)}$. 
  If $k\neq 0$ then we also assume $u_0=u_1=0$. 
  Then
  \begin{enumerate}[(i)]
    \item the map $\map S{D(S) \cap X^{(k)}}{Y^{(k)}}$ is Fréchet-differentiable in $B$ and $Q$, and 
    \item for $k\geq 2$ the map $\map S{D(S) \cap X^{(k)}}{Y^{(k-2)}}$ is Fréchet-differentiable in all arguments.
  \end{enumerate}
  For each of these cases and each symbol $x\in \{A, B, C, Q\}$ is $u_h = (\partial_x S)(A, B, C, Q)[h]$ given as the unique solution of the equation 
  \begin{equation*}
      (\mathcal Cu_h')'+\mathcal Bu_h' + (\mathcal A + \mathcal Q) u_h = g_x(u)[h],
  \end{equation*}
  together with homogeneous initial conditions.
\end{theorem}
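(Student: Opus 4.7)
The plan is to construct the candidate derivative $u_h$ as the solution of the linearized equation, then bound the remainder $r := S(p+h)-S(p)-u_h$ quadratically in $\norm{h}$ by showing it solves the same linearized equation with a right-hand side $g_x(w)[h]$ where $w:=S(p+h)-S(p)$ is already known to be $O(\norm{h})$ by \Cref{theorem:abstract:lipschitz}. The cleanness of this approach depends crucially on the observation that each $g_x(\cdot)[h]$ is \emph{linear in its first argument}.

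First I would establish well-posedness of $u_h$. For $p\in D(S)\cap X^{(k)}$, \Cref{theorem:abstract:regularity} places $u=S(p)\in Y^{(k)}$, which plugged into the bilinear-form chart gives $g_B(u)[h], g_Q(u)[h]\in H^k(I;H)$, and — this uses the full regularity available — $g_A(u)[h]\in H^k(I;V^*)$ as well as $g_C(u)[h]\in H^{k-1}(I;H)$. Together with the homogeneous initial values $u_0=u_1=0$ and the recursion defining the higher $u_j$'s (which vanish for $j<k$ thanks to $f^{(j)}(0)=0$ for $j\leq k-2$), the compatibility conditions required by \Cref{theorem:abstract:regularity} hold for the linearized right-hand side. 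One more invocation of \Cref{theorem:abstract:regularity} then yields $u_h\in Y^{(k)}$ in the $B,Q$ case and $u_h\in Y^{(k-1)}$ in the $A,C$ case, and the dependence $h\mapsto u_h$ is bounded linear by the energy estimate.

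Next, subtracting the evolution equation for $S(p)$ from that for $S(p+h)$ shows that $w=S(p+h)-S(p)$ solves the same linearized operator equation as $u_h$ but with right-hand side $g_x(u^+)[h]$, where $u^+=S(p+h)$ (the $C$-direction requires checking that $(\mathcal Cw')(0)=0$, which holds because $u',(u^+)'$ are continuous into $H$ once $k\geq 1$, exactly as in the proof of \Cref{theorem:abstract:lipschitz}(ii)). Linearity of $g_x$ in its first argument then gives that $r=w-u_h$ solves the linearized equation with right-hand side $g_x(w)[h]$ and homogeneous initial values. Applying the energy estimate of \Cref{theorem:abstract:regularity} one final time, at regularity index $k$ (for $B,Q$) or $k-2$ (for $A,C$), and then using continuity of the bilinear form $g_x$ together with \Cref{theorem:abstract:lipschitz}, yields
\begin{equation*}
  \norm{r}_{Y^{(k)}} \ \text{resp.}\ \norm{r}_{Y^{(k-2)}} \ \leq\ C\,\norm{w}_{Y^{(\cdot)}}\,\norm{h}\ \leq\ C'\,\norm{h}^2,
\end{equation*}
which is precisely the Fréchet property with derivative $u_h$.

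The main obstacle is the bookkeeping of regularity indices: in the $A,C$-directions the right-hand side $g_x$ loses one derivative beyond what $g_B, g_Q$ lose, which is why (ii) needs $k\geq 2$ and lands in $Y^{(k-2)}$ rather than in $Y^{(k)}$. One has to check at every step that the regularity provided by \Cref{theorem:abstract:regularity} for $u$, the bilinear-form chart for $g_x$, and the Lipschitz range from \Cref{theorem:abstract:lipschitz} all line up so that both $u_h$ and the remainder equation are well-posed and the final estimate lands in the target space promised by the theorem. Once this is properly aligned, the proof is a routine chain of estimates; no fundamentally new analytic input beyond the two preceding theorems is required.
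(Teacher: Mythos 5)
Your proposal is correct and follows essentially the same route as the paper: the paper likewise defines the remainder $w=u^+-u-u_h$, observes that by linearity of $g_x$ in its first argument it solves the linearized equation with right-hand side $g_x(u-u^+)[h]$ and homogeneous initial data (checking, for the $C$-direction, that $(\mathcal Cw')(0)=0$ via continuity of $u'$ and $(u^+)'$), and then combines the energy estimate of \Cref{theorem:abstract:regularity} with the Lipschitz bound of \Cref{theorem:abstract:lipschitz} to get the quadratic estimate, with the same one-derivative loss forcing the target space $Y^{(k-2)}$ for $A$ and $C$. Your additional bookkeeping on the well-posedness of $u_h$ and the compatibility conditions matches the discussion the paper places just before the theorem.
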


\begin{proof}\hfill
  \begin{enumerate}[(i)]
    \item Let $p=(A, B, C, Q)\in D(S)\cap X$, $h\in L^\infty(I; \mathcal L(H))$, $u = Sp$, $u^+ = S(p+(0,h, 0,0))$ and $u_h$ as in the assertion. 
    Their difference $w = u^+ - u - u_h$ solves the equation 
    \begin{equation*}
      (\mathcal Cw')'+\mathcal Bw' + (\mathcal A + \mathcal Q) w = g_B(u-u^+)[h]
    \end{equation*}
    in $L^2(I; V^*)$ with vanishing initial conditions. 
    We use energy estimates for $w$ and \Cref{theorem:abstract:lipschitz} to obtain constants $\lambda_B, \lambda_{B+h}$ (that continuously depend on $B$ and $h$) and the estimate
    \begin{align*}
      \norm{w}_Y &\leq \lambda_B \norm{g_B(u-u^+)[h]}_{L^2(I; H)} \leq  \lambda_B \norm{u-u^+}_{H^1(I; H)}  \norm{h}_{L^\infty(I; \mathcal L(H))}\\
      &\leq \lambda_B^2 \lambda_{B+h} \norm{f} \norm{h}^2_{L^\infty(I; \mathcal L(H))} = \mathcal O\left(\norm{h}_{W^{1,\infty}(I; \mathcal L(H))}^2\right),
    \end{align*}
    which shows differentiability of $\map S{D(S)\cap X}Y$ w.r.t. $B$ and can be performed in the same way for $Q$ and the case $\map S{D(S)\cap X^{(k)}}{Y^{(k)}}$.
    \item For $A$ and $C$ we need to use different spaces for the estimation of $w$. 
    For every $h\in W^{k+1}(I; \Lsa(V, V^*))$ with $A+h\in L^\infty(I; \Lsa_{a_0}(V, V^*))$ we calculate
    \begin{align*}
      \norm{w}_{Y^{(k-2)}} &\leq \lambda_A^{(k-2)} \norm{g_A(u-u^+)[h]}_{H^{k-1}(I; V^*)} \\ 
      &\leq \lambda_A^{(k-2)} \lambda_A^{(k-1)} \lambda_{A+h}^{(k)} \norm{f} \norm{h}_{W^{k,\infty}(I; \Lsa(V, V^*))}  \norm{h}_{W^{k-1, \infty}(I; \Lsa(V, V^*))} = \mathcal O(\norm{h}^2).
    \end{align*}
    Note that the last equality holds only if $h$ is measured in the $W^{k+1}(I; \Lsa(V, V^*))$ (or stronger) norm because the constant $\lambda_{A+h}^{(k)}$ depends on the norm of $h$ in this space. 
    Regarding the deriviative in direction $C$: For $h\in W^{k+1}(I; \Lsa(H))$ small enough such that $C+h\in L^\infty(I; \Lsa_{c_0}(H))$ holds we see that $w$ also fulfills homogeneous initial conditions (because $u', (u^+)'$ and $u_h'$ are continuous in $t=0$) and
    \begin{align*}
      \norm{w}_{Y^{(k-2)}} &\leq \lambda_C^{(k-2)} \norm{g_A(u-u^+)[h]}_{H^{k-2}(I; H)} \leq \lambda_C^{(k-2)} \norm{u-u^+}_{H^{k}(I; H)}  \norm{h}_{W^{k-2, \infty}(I; \Lsa(H))}\\
      &\leq \lambda_C^{(k-2)} \lambda_C^{(k-1)} \lambda_{C+h}^{(k)} \norm{f} \norm{h}_{W^{k,\infty}(I; \Lsa(H))} \norm{h}_{W^{k-1, \infty}(I; \Lsa(H))} = \mathcal O(\norm{h}^2).
    \end{align*}
    Again, the last step only holds for $h\in W^{k+1}(I; \Lsa(H))$ because of the constant $\lambda_{C+h}^{(k)}$. \qedhere
  \end{enumerate}
\end{proof}

We would like to remark that although the derivative in direction $A$ or $C$ maps to $Y^{(k-1)}$, we can only show that it is indeed the derivative in the weaker norm of $Y^{(k-2)}$. 
This loss of regularity is due to the application of \Cref{theorem:abstract:lipschitz}. 
This is also the reason why we cannot show the tangential cone condition in these cases. 
On the other hand, the estimate for the linearization error in direction $B$ or $Q$ enables to show the tangential cone condition because there this loss of regularity does not occur (cf.~\cite{gerken_reconstruction_2017}). 

When trying to reconstruct one of the operators $A, B, C$ and $Q$ or a parameter that influences exactly one of these operators, this differentiability result is sufficient. 
If on the other hand the searched for quantity influences multiple operators then we also require the derivative of the whole operator $S$. 
To obtain this we prove that the partial derivatives of $S$ are locally Lipschitz continuous.
This fact is also interesting for the corresponding inverse problems because it allows to conclude ill-posedness of the derivative from ill-posedness of the nonlinear operator (cf.~\cite{hofmann_factors_1994}). 

\begin{lemma}\label{lemma:abstract:frechet_lipschitz}
  Let $k\geq 2$, $f\in \mathcal F^{(k)}$ and $u_0=u_1=0$.
  Each of the operators
  \begin{align*}
    &\partial_A S:\, D(S) \cap X^{(k)} \to \mathcal L(W^{k+1,\infty}(I; \Lsa(V,V^*)), Y^{(k-2)})\\
    &\partial_B S:\, D(S) \cap X^{(k)} \to \mathcal L(W^{k,\infty}(I; \mathcal L(H)), Y^{(k-1)})\\
    &\partial_C S:\, D(S) \cap X^{(k)} \to \mathcal L(W^{k+1,\infty}(I; \Lsa(H)), Y^{(k-2)})\\
    &\partial_Q S:\, D(S) \cap X^{(k)} \to \mathcal L(W^{k,\infty}(I; \mathcal L(V,H)), Y^{(k-1)})
  \end{align*}
  is locally Lipschitz continuous.
\end{lemma}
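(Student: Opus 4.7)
The plan is to reduce the Lipschitz continuity of each $\partial_x S$ to another application of the energy estimate from \Cref{theorem:abstract:regularity}, combined with the continuous dependence supplied by \Cref{theorem:abstract:lipschitz}. Fix $x\in\{A,B,C,Q\}$ and two parameter tuples $p$, $p^+$ in a bounded neighbourhood $U\subset D(S)\cap X^{(k)}$ of the reference point. For an arbitrary direction $h$ in the appropriate space, set $u=S(p)$, $u^+=S(p^+)$, $u_h=(\partial_x S)(p)[h]$ and $u_h^+=(\partial_x S)(p^+)[h]$; \Cref{theorem:abstract:frechet_partial} identifies both $u_h$ and $u_h^+$ as the unique solutions of the corresponding linearized problems with homogeneous initial data. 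Subtracting them and moving the $p^+$-operators to the left, $w:=u_h^+-u_h$ satisfies
\begin{equation*}
(\mathcal{C}^+ w')'+\mathcal{B}^+ w'+(\mathcal{A}^++\mathcal{Q}^+)w=R,\qquad w(0)=0,\quad (\mathcal{C}^+ w')(0)=0,
\end{equation*}
with right-hand side
\begin{equation*}
R=g_x(u^+)[h]-g_x(u)[h]-((\mathcal{C}^+-\mathcal{C})u_h')'-(\mathcal{B}^+-\mathcal{B})u_h'-\bigl((\mathcal{A}^+-\mathcal{A})+(\mathcal{Q}^+-\mathcal{Q})\bigr)u_h.
\end{equation*}

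I then apply \Cref{theorem:abstract:regularity} to this linear hyperbolic problem in the respective target class, namely $Y^{(k-1)}$ when $x\in\{B,Q\}$ and $Y^{(k-2)}$ when $x\in\{A,C\}$; this bounds $\|w\|$ by $\|R\|$ in the corresponding Bochner--Sobolev space, with a constant depending continuously on $p^+$ and hence uniformly bounded on $U$. The right-hand side is then estimated term by term: the first difference $g_x(u^+)[h]-g_x(u)[h]$ is, by the bilinearity of $g_x$, a product of $h$ with $u^+-u$, which is controlled through \Cref{theorem:abstract:lipschitz}(ii) by $L\,\|p^+-p\|_{X^{(k)}}$; each perturbation term is likewise a product of an operator difference measured in $X^{(k)}$ with a factor depending only on $u_h$, and those factors are bounded by a constant multiple of $\|h\|$ uniformly on $U$ by applying the energy estimate to the linearized equation that defines $u_h$.

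The main obstacle is the compatibility check of regularity indices: for each of the four symbols one has to verify that the Sobolev space chosen for the energy estimate on $w$, the norm used to bound the $u_h$-containing factors in $R$, and the level at which \Cref{theorem:abstract:lipschitz} bounds $u^+-u$ all match, so that the joint bound is linear in $\|h\|$ (producing the operator-norm estimate) and linear in $\|p^+-p\|_{X^{(k)}}$. The assumption $k\geq 2$ enters here in two ways: it is exactly what is needed for Fréchet-differentiability in $A$ and $C$ via \Cref{theorem:abstract:frechet_partial}(ii), and it guarantees sufficient temporal regularity of $u_h$ to expand the top-order perturbation $((\mathcal{C}^+-\mathcal{C})u_h')'$ by the product rule, which is why the $A$- and $C$-cases require $h\in W^{k+1,\infty}$ rather than $W^{k,\infty}$. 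Assembling the term-by-term estimates yields, for each $x$, a bound $\|(\partial_x S)(p^+)-(\partial_x S)(p)\|_{\mathcal{L}}\leq L\,\|p^+-p\|_{X^{(k)}}$ with $L$ depending continuously on the reference parameters, which is the claim.
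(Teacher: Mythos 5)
Your proposal is correct in substance and rests on the same two pillars as the paper's proof (the energy estimate of \Cref{theorem:abstract:regularity} and the local Lipschitz continuity of $S$ from \Cref{theorem:abstract:lipschitz}), but it organizes the perturbation argument differently. You subtract the two linearized equations in one step and collect \emph{all} differences into a single source term $R$; the paper instead introduces an intermediate function $w_h$ (same left-hand side as $u_h^{(1)}$, same right-hand side as $u_h^{(2)}$) and estimates $u_h^{(1)}-w_h$ (right-hand sides differ, operators agree) and $u_h^{(2)}-w_h$ (right-hand sides agree, operators differ) separately, reusing \Cref{theorem:abstract:lipschitz} as a black box for the second piece. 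The one place where your one-shot version needs an extra word is the term $(\mathcal A^+-\mathcal A)u_h$: it is only $V^*$-valued, so $R$ is a sum of an $H^{k-2}(I;H)$-part and an $H^{k-1}(I;V^*)$-part, whereas \Cref{theorem:abstract:regularity} admits right-hand sides in the \emph{union} $H^{k}(I;H)\cup H^{k+1}(I;V^*)$, not the sum. This is repaired by linearity — split $w$ into the solutions driven by the $H$-valued and the $V^*$-valued portions of $R$ and estimate each — which is in effect the decomposition the paper builds in from the start. Two further small points: the homogeneous initial condition $(\mathcal C^+w')(0)=0$ requires noting that $u_h'$ is continuous in $H$ (true for $k\geq 2$) so that $(\mathcal Cu_h')(0)=0$ forces $u_h'(0)=0$; and the reason $h$ must be measured in $W^{k+1,\infty}$ for the $A$- and $C$-cases is that this is the domain on which $\partial_AS$ and $\partial_CS$ are defined by \Cref{theorem:abstract:frechet_partial}(ii) — the Lipschitz bound itself only consumes $\norm{h}_{W^{k,\infty}}$, which is then relaxed — rather than the product-rule expansion you cite. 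With these adjustments your argument closes and yields the same constants, depending continuously on the reference point in $X^{(k)}$.
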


\begin{proof}
  The proofs only differ in the use of different spaces, and the most difficult ones are $A$ and $C$. 
  Therefore we only demonstrate the proof for $\partial_C$. \\
  For $i=1,2$ let $p_i=(A_i, B_i, C_i, Q_i)\in D(S)\cap X^{(k)}$, $h\in W^{k+1,\infty}(I; \Lsa(H))$, $u_h^{(i)}=\partial_C S(p_i)[h]$ and $u^{(i)}=S(p_i)$.
  The weak formulations of $u_h^{(1,2)}$ differ in their left- and right-hand sides. 
  To connect them, we introduce the function $w_h$ which solves the equation with the left-hand side of $u_h^{(1)}$ and the right-hand side of $u_h^{(2)}$. 
  Thus, in addition to homogeneous initial conditions, $w_h$ solves
  \begin{equation*}
    (\mathcal C_1 w_h')' + \mathcal B_1 w_h' + (\mathcal A_1+\mathcal Q_1) w_h = g_C(u^{(2)})[h]
  \end{equation*}
  in the $L^2(I; V^*)$-sense.
  As noted, $u_h^{(1)}$ and $w_h$ solve the same formulation with a different right-hand side. 
  We apply \Cref{theorem:abstract:regularity} and obtain a constant $\Lambda_1$, depending on $k$ and continuously on the $X^{(k-2)}$-norm of $p_1$, with
  \begin{align*}
    \norm{u_h^{(1)}-w_h}_{Y^{(k-2)}} &\leq \Lambda_1\norm{g_C(u^{(1)}-u^{(2)})[h]}_{H^{k-2}(I; H)}\\
    &\leq \Lambda_1 \norm{g_C} \norms{u^{(1)}-u^{(2)}}_{H^{k}(I; H)} \norm{h}_{W^{k-1,\infty}(I; \Lsa(H))}.
  \end{align*}
  Here $\norm{g_C}$ denotes the norm of $g_C$ in the space
  \begin{equation*}
    \mathcal L\left(H^{k}(I; H),\ \mathcal L\left(W^{k-1,\infty}(I; \mathcal L(H)),\, H^{k-2}(I; H)\right)\right),
  \end{equation*}
  which only depends on $k$. 
  Now we make use of the local Lipschitz continuity of $S:D(S)\cap X^{(k)} \to Y^{(k-1)}$ and obtain another constant $\Lambda_2$, depending on $p_1$ in $X^{(k)}$ and the estimate
  \begin{equation}
    \norm{u_h^{(1)}-w_h}_{Y^{(k-2)}} \leq \Lambda_1 \Lambda_2 \norm{g_C} \norms{p_1-p_2}_{X^{(k)}} \norm{h}_{W^{k-1,\infty}(I; \Lsa(H))}. \label{eq:abstract:frechet_lipschitz:1}
  \end{equation}
  Next we estimate the distance between $u_h^{(2)}$ and $w_h$. 
  Both functions solve a equation with the same right-hand side, but different left-hand sides. 
  Hence, we can apply Lipschitz continuity of the operator $S$ which would arise when the right-hand side would not be $f$, but $g_C(u^{(2)})[h]\in H^{k-2}(I; H)$. 
  Due to linearity of the equation, the norm of the right-hand side has to enter linearly into the Lipschitz constant, therefore through \Cref{theorem:abstract:lipschitz} we get another constant $\Lambda_3$, depending on $k$ and continuously on $p_1$ in $X^{(k-1)}$, such that 
  \begin{align*}
    \norm{u_h^{(2)}-w_h}_{Y^{(k-2)}} &\leq  \Lambda_3 \norm{g_C(u^{(2)})[h]}_{H^{k-1}(I; H)} \norms{p_1-p_2}_{X^{(k-1)}} \\
    &\leq \Lambda_3 \norm{g_C} \norms{p_1-p_2}_{X^{(k-1)}} \norms{u^{(2)}}_{H^{k+1}(I; H)} \norm{h}_{W^{k,\infty}(I; \Lsa(H))}.
  \end{align*}
  This time $\norm{g_C}$ denotes the norm of $g_C$ in
  \begin{equation*}
    \mathcal L\left(H^{k+1}(I; H),\ \mathcal L\left(W^{k,\infty}(I; \Lsa(H)),\, H^{k-1}(I; H)\right)\right).
  \end{equation*}
  Energy estimates for $S:D(S)\cap X^{(k)}\to Y^{(k)}$ from \Cref{theorem:abstract:regularity} provide $\Lambda_4>0$ with
  \begin{equation}
    \norm{u_h^{(2)}-w_h}_{Y^{(k-2)}} \leq \Lambda_3 \Lambda_4 \norm{g_C}  \norms{p_1-p_2}_{X^{(k-1)}} \norm{f} \norm{h}_{W^{k,\infty}(I; \Lsa(H))}, \label{eq:abstract:frechet_lipschitz:2}
  \end{equation}
  where $f$ is measured in the $H^k(I; H)$- or  $H^{k+1}(I; V^*)$ norm. \\
  Finally, we can combine~\eqref{eq:abstract:frechet_lipschitz:1} and~\eqref{eq:abstract:frechet_lipschitz:2} to conclude 
  \begin{align*}
    \norm{u_h^{(1)}-u_h^{(2)}}_{Y^{(k-2)}} &\leq \norm{u_h^{(1)}-w_h}_{Y^{(k-2)}} + \norm{u_h^{(2)}-w_h}_{Y^{(k-2)}} \\
    &\leq \Lambda \norm{f} \norms{p_1-p_2}_{X^{(k-1)}} \norm{h}_{W^{k,\infty}(I; \Lsa(H))}\\
    &\leq \Lambda \norm{f} \norms{p_1-p_2}_{X^{(k-1)}} \norm{h}_{W^{k+1,\infty}(I; \Lsa(H))},
  \end{align*}
  where $\Lambda$ depends continuously on $p_1$ in $X^{(k)}$.
\end{proof}

The differentiability of the whole operator $S$ follows from the differentiability in each arguments and the continuity of the derivatives. 

\begin{corollary}\label{corollary:abstract:frechet_total}
  Let $k\geq 2$, $u_0=u_1=0$ and $f\in \mathcal F^{(k)}$.  
  The operator $S: D(S) \cap X^{(k)} \to Y^{(k-2)}$ is Fréchet differentiable in every $p=(A, B, C, Q)\in D(S)\cap X^{(k)}$. 
  Its derivative $\partial S(p)[h]$ is given for every $h=(\bar A, \bar B, \bar C, \bar Q)\in X^{(k)}$ as the solution $u_h$ of
  \begin{align*}
      (\mathcal Cu_h')'+\mathcal Bu_h' + (\mathcal A + \mathcal Q) u_h &= g_A(u)[\bar A] + g_B(u)[\bar B] + g_C(u)[\bar C] + g_Q(u)[\bar Q] \\ &= -(\bar A(\cdot)+\bar Q(\cdot))[u(\cdot)] - \bar B(\cdot)[u'(\cdot)] - (\bar C(\cdot)[u'(\cdot)])',
  \end{align*}
  that has vanishing initial conditions $u_h(0)= (\mathcal C u_h')(0) = 0$. 
  As always, $u=S(p)$ denotes the solution of the direct problem. 
  Furthermore, the map $\partial S: D(S) \cap X^{(k)} \to \mathcal L(X^{(k)}, Y^{(k-2)})$ is locally Lipschitz continuous. 
\end{corollary}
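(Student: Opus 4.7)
The plan is to invoke the standard Banach-space result that a map on a product space with continuous partial Fréchet derivatives is totally Fréchet differentiable, with total derivative equal to the sum of the partials. The two ingredients — existence of all partial derivatives $\partial_x S(p)$ and their local Lipschitz continuity — are furnished by \Cref{theorem:abstract:frechet_partial} and \Cref{lemma:abstract:frechet_lipschitz}, respectively. So the proof should be short, with only some bookkeeping of function spaces.

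Concretely, I fix $p=(A,B,C,Q)\in D(S)\cap X^{(k)}$ and $h=(\bar A,\bar B,\bar C,\bar Q)\in X^{(k)}$ small enough that $p+h$ and all the intermediate points introduced below still lie in the open set $D(S)\cap X^{(k)}$ (openness follows from \Cref{lemma:abstract:lsa_interior}). I would then introduce the four-step telescoping sequence $p_0=p$, $p_1=p+(\bar A,0,0,0)$, $p_2=p+(\bar A,\bar B,0,0)$, $p_3=p+(\bar A,\bar B,\bar C,0)$, $p_4=p+h$ and split
\[
  S(p+h)-S(p) \;=\; \sum_{j=0}^3 \bigl(S(p_{j+1})-S(p_j)\bigr).
\]
Along each segment only one component changes, so the fundamental theorem of calculus in Banach spaces — applicable because the one-parameter curve is $C^1$ into $Y^{(k-2)}$ by \Cref{theorem:abstract:frechet_partial} combined with the continuity of the partial derivative from \Cref{lemma:abstract:frechet_lipschitz} — expresses the $j$-th summand as $\int_0^1 \partial_{x_j} S(p_j + t\tilde h_j)[\tilde h_j]\,\dt$, where $\tilde h_j$ denotes the relevant one-component increment.

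Subtracting the candidate total derivative $\sum_j \partial_{x_j} S(p)[\tilde h_j]$ and estimating each remainder via local Lipschitz continuity of $\partial_{x_j} S$ from \Cref{lemma:abstract:frechet_lipschitz} yields
\[
  \left\| \int_0^1 \bigl(\partial_{x_j} S(p_j + t\tilde h_j) - \partial_{x_j} S(p)\bigr)[\tilde h_j]\,\dt \right\|_{Y^{(k-2)}} \leq L \, \|h\|_{X^{(k)}}\, \|\tilde h_j\|_{X^{(k)}} = \mathcal O(\|h\|^2_{X^{(k)}}).
\]
Summing over $j$ establishes Fréchet differentiability at $p$ with the claimed derivative. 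Plugging in the formulas for $\partial_x S(p)[\tilde h_j]$ from \Cref{theorem:abstract:frechet_partial} and using linearity of the evolution equation then identifies $u_h$ with the solution of the PDE stated in the corollary, equipped with homogeneous initial conditions. The local Lipschitz continuity of the total derivative is finally an immediate consequence of the triangle inequality applied to the four partial-derivative maps, all of which are locally Lipschitz by \Cref{lemma:abstract:frechet_lipschitz} (the slightly stronger codomain $Y^{(k-1)}$ appearing for $B$ and $Q$ embeds continuously into $Y^{(k-2)}$).

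The only real subtlety is the bookkeeping of spaces: the partial derivatives live in different codomains ($Y^{(k-1)}$ for $B,Q$ versus $Y^{(k-2)}$ for $A,C$) and their Lipschitz estimates demand slightly different regularities of the increments. Since the corollary is formulated in the weakest target $Y^{(k-2)}$ on the strongest domain $X^{(k)}$ among those considered, the natural continuous embeddings render all four contributions compatible, so no additional arguments are required.
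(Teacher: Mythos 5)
Your proposal is correct and is essentially the paper's argument: the paper proves this corollary by simply invoking the classical fact that existence plus (local Lipschitz) continuity of all partial Fréchet derivatives, established in \Cref{theorem:abstract:frechet_partial} and \Cref{lemma:abstract:frechet_lipschitz}, yields total differentiability with the derivative given by the sum of the partials. Your telescoping decomposition with the fundamental theorem of calculus and the Lipschitz remainder estimate is just the standard proof of that fact written out, and your bookkeeping of the codomains ($Y^{(k-1)}$ versus $Y^{(k-2)}$) matches the paper's choice of the weakest common target.
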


\subsection{Adjoint of the Fréchet-Derivative}%
\label{section:abstract:adjoint}

For the numerical inversion of linearized problems that arise from $S$ we need not only its Fréchet derivative, but also its adjoint. 
At this point we only know that this adjoint exists, but have no means of calculating it efficiently. 
From an application viewpoint, $Y$ (or even $Y^{(k)}$) is not a suitable space for the (measured) data that is presumed to be noisy because this would imply that the noise is differentiable in time. 
An approach with $L^2$-spaces seems more sensible here, and also makes the analysis easier because $L^2(I; H)$ is a Hilbert space. 
Therefore we seek to calculate the adjoint of $\partial S(p)\in \mathcal L(X^{(k)}, L^2(I; H))$, which can be identified with an operator $\partial S(p)^* \in \mathcal L(L^2(I; H), (X^{(k)})^*)$. 
But even for this choice in spaces, the application of $\partial S(p)^*[v]\in {(X^{(k)})}^*$ to $h\in X^{(k)}$ must still be calculated by $\scp v{\partial S(p)h}_{L^2(I; H)}$ and therefore requires the solution of a different PDE for every $h$. 
Hence, we will try to shift as many operations from $h$ to $v$ as possible.

Unsurprisingly, $\partial S(p)^*[v]$ will involve the solution of an evolution equation, namely of the adjoint equation to~\eqref{eq:abstract:problem}, i.e. 
\begin{equation}\label{eq:abstract:adjoint_equation}
  (\mathcal Cw^\prime)^\prime - \mathcal B^* w^\prime + (\mathcal A + \mathcal Q^* - (\mathcal B^*)^\prime) w = v \ \text{in $L^2(I; V^*)$.}
\end{equation}
Here $\mathcal B^*$ and $\mathcal Q^*$ denote the realization of $t\mapsto B^*(t)$ and $Q^*(t)$, respectively. 
Due to the pointwise definition these are identical to the adjoints of the realizations $\mathcal B\in \mathcal L(L^2(I; H))$ and $\mathcal Q \in \mathcal L(L^2(I; H), L^2(I; V^*))$. 
Equation~\eqref{eq:abstract:adjoint_equation} has to be equipped with homogeneous \emph{end conditions} $w(T) = (\mathcal Cw^\prime)(T) = 0$, and in this form is the adjoint of the operator $f\mapsto u$ with $f$ and $u$ as in~\eqref{eq:abstract:problem} with respect to $L^2(I; H)$.
If $B=0$ and $Q$ is pointwise self-adjoint, then this is the original equation which has to be solved backwards in time. 
In any case, conditions for the unique solvability and regularity are also given by \Cref{lemma:abstract:wellposed} and \Cref{theorem:abstract:regularity} after reversing time using the transformation $t\mapsto T-t$.   

\begin{theorem}\label{theorem:abstract:adjoints}
Let $p=(A, B, C, Q)\in D(S)\cap X^{(k)}$ with $Q^*\in L^\infty(I; \mathcal L(V, H))$, $k\in \N\cup \{0\}$, $f\in \mathcal F^{(k)}$ and $u=S(p)$. 
In the case that $k\geq 1$ we also assume $u_0=u_1=0$.
For $v\in L^2(I; H)$ let $w_v$ denote the solution of~\eqref{eq:abstract:adjoint_equation} with homogeneous end conditions.
\begin{enumerate}[(i)]
  \item Set $k_1=\max \{k,1\}$. 
  The adjoints of 
  \begin{equation*}
    \partial_Q S(p) \in \mathcal L(W^{k_1,\infty}(I; \mathcal L(V, H)), L^2(I; H)) \text{ and } \partial_B S(p) \in \mathcal L(W^{k_1,\infty}(I; \mathcal L(H)), L^2(I; H))
  \end{equation*}
  can be characterized for $\bar Q \in W^{k_1,\infty}(I; \mathcal L(V, H))$, $\bar B \in W^{k_1,\infty}(I; \mathcal L(H))$ via
  \begin{align*}
    \dup{(\partial_Q S(p))^*[v]}{\bar Q}_{W^{k_1,\infty}(I; \mathcal L(V, H))^* \times W^{k_1,\infty}(I; \mathcal L(V, H))} &= -\int_0^T \scp{\bar Q(t)u(t)}{w_v(t)} \dt  \\
    \text{and } \dup{(\partial_B S(p))^*[v]}{\bar B}_{W^{k_1,\infty}(I; \mathcal L(H))^* \times W^{k_1,\infty}(I; \mathcal L(H))} &= -\int_0^T \scp{\bar B(t)u'(t)}{w_v(t)} \dt.
  \end{align*}
  \item If $k\geq 2$, then the evaluation of the adjoints of
  \begin{equation*}
    \partial_A S(p) \in \mathcal L(W^{k+1,\infty}(I; \Lsa(V, V^*)), L^2(I; H)) \text{,\ } \partial_C S(p) \in \mathcal L(W^{k+1,\infty}(I; \Lsa(H)), L^2(I; H))
  \end{equation*}
  can be expressed for every $\bar C\in W^{k+1,\infty}(I; \Lsa(H))$, $\bar A\in W^{k+1,\infty}(I; \Lsa(V, V^*))$ through
  \begin{align*}
    \dup{(\partial_A S(p))^*[v]}{\bar A}_{W^{k+1,\infty}(I; \Lsa(V, V^*))^* \times W^{k+1,\infty}(I; \Lsa(V, V^*))} &= -\int_0^T \dup{\bar A(t)u(t)}{w_v(t)} \dt \\
    \text{and } \dup{(\partial_C S(p))^*[v]}{\bar C}_{W^{k+1,\infty}(I; \Lsa(H))^* \times W^{k+1,\infty}(I; \Lsa(H))} &= \int_0^T \scp{\bar C(t) u'(t)}{w_v'(t)} \dt. 
  \end{align*}
\end{enumerate}
\end{theorem}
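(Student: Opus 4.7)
The plan is a classical adjoint-state argument: introduce $w_v$ as a Lagrange multiplier for the linearized forward equation so that $\int_0^T (v, u_h)_H \dt$ can be rewritten as an integral depending explicitly on $h$ rather than on $u_h$. First I would check well-posedness of the adjoint equation~\eqref{eq:abstract:adjoint_equation}: under the hypothesis $Q^*\in L^\infty(I;\mathcal L(V,H))$, and because $B\in W^{1,\infty}(I;\mathcal L(H))$ is part of $p\in X^{(k)}$ so that $(\mathcal B^*)'\in L^\infty(I;\mathcal L(H))$, the equation falls into the framework of \Cref{lemma:abstract:wellposed} after the time reversal $t\mapsto T-t$ (which turns the end conditions into initial conditions); hence $w_v\in L^2(I;V)\cap H^1(I;H)$ with $(\mathcal Cw_v')'\in L^2(I;V^*)$ exists uniquely.

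Writing $Lu\assign (\mathcal Cu')'+\mathcal Bu'+(\mathcal A+\mathcal Q)u$, the heart of the proof is the duality identity
\begin{equation*}
  \int_0^T (v, u_h)_H \dt = \int_0^T \langle L u_h, w_v\rangle \dt = \int_0^T \langle g_x(u)[h], w_v\rangle \dt,
\end{equation*}
which I would derive by integrating $\int_0^T \langle L u_h, w_v\rangle\dt$ by parts term-by-term. Two IBPs on $\int_0^T\langle (\mathcal Cu_h')', w_v\rangle\dt$ transfer both time derivatives onto $w_v$; the four resulting boundary contributions vanish because of $u_h(0) = (\mathcal Cu_h')(0) = 0$ (the homogeneous initial conditions of the linearization from \Cref{theorem:abstract:frechet_partial}) and $w_v(T) = (\mathcal Cw_v')(T) = 0$. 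For the damping term, IBP produces $\int_0^T (\mathcal Bu_h', w_v)\dt = -\int_0^T (u_h, \mathcal B^* w_v')\dt - \int_0^T \langle u_h, (\mathcal B^*)' w_v\rangle\dt$, which yields exactly the two $B^*$-contributions appearing in the adjoint operator. Self-adjointness of $\mathcal A$ and the definition of $\mathcal Q^*$ take care of the remaining terms, and $L^* w_v = v$ closes the identity.

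Substituting the explicit formulas for $g_x$ from \Cref{section:abstract:frechet} immediately gives the claims for $x\in\{A, B, Q\}$. For the $C$ direction the intermediate expression is $-\int_0^T \langle (\bar Cu')', w_v\rangle\dt$; one further IBP rewrites this as $\int_0^T (\bar C u', w_v')_H\dt$, with the boundary contributions vanishing thanks to $w_v(T)=0$ and $u'(0) = C(0)^{-1} u_1 = 0$ under the assumption $u_0=u_1=0$ imposed in part~(ii) (and the continuity of $u'$ at $t=0$ granted by $u\in H^2(I;H)$, which \Cref{theorem:abstract:regularity} provides for $k\geq 1$).

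The main technical obstacle will be to justify the integration by parts rigorously, particularly in part~(i) with $k=0$, where $u_h$ only enjoys $u_h \in Y^{(0)}$ with $(\mathcal Cu_h')' \in L^2(I;V^*)$. The boundary values $(\mathcal Cu_h')(0)$ and $(\mathcal Cw_v')(T)$ have to be read via the standard continuous embedding of $\{z\in L^2(I;V) : (\mathcal Cz')'\in L^2(I;V^*)\}$ into $C(\bar I; V^*)$, and the IBP identity must be established first on a dense subclass of smoother test functions and then extended by continuity using the energy estimates of \Cref{lemma:abstract:wellposed} and \Cref{theorem:abstract:regularity}. Higher-$k$ cases inherit the identity with additional regularity to spare, so this technicality is confined to the lowest-regularity setting.
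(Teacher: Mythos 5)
Your proposal is correct and follows essentially the same route as the paper: establish well-posedness of the adjoint equation via \Cref{lemma:abstract:wellposed} after time reversal, then derive the duality identity $\scp{v}{u_h}_{L^2(I;H)}=\int_0^T\dup{g_x(u)[h]}{w_v}\dt$ by term-by-term integration by parts (the paper runs the computation starting from the $w_v$-equation tested with $u_h$, you start from $Lu_h$ tested with $w_v$, which is the same calculation read backwards), and finish the $C$-case with one extra integration by parts using $u'(0)=C(0)^{-1}u_1=0$ and $w_v(T)=0$. Your closing remark on justifying the integration by parts at $k=0$ via density is a sensible elaboration of what the paper handles by citing the Gelfand-triple integration-by-parts formula.
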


\begin{proof}
  The assumption $t\mapsto Q(t)^* \in L^\infty(I; \mathcal L(V, H))$ has to be made to guarantee existence of $w=w_v\in Y$. 
  Then \Cref{lemma:abstract:wellposed} states that $w_v$ is well-defined for all $v\in L^2(I; H)$ and depends continuously on $v$. 
  We continue by verifying that $w_v$ indeed has something to do with the adjoint of $\partial S$. 
  From \Cref{theorem:abstract:frechet_partial} we know that for each symbol $x\in \{A, B, C, Q\}$ holds $\partial_x S(p)[h] = u_h \in Y$, where $u_h$ solves 
  \begin{equation*}
    (\mathcal Cu_h^\prime)^\prime + \mathcal Bu_h^\prime + (\mathcal A+\mathcal Q)u_h = g_x(u)[h]
  \end{equation*}
  with $u = S(p)$. 
  We test the equation that is solved by $w_v$ at time $t\in I$ with $u_h(t)$ and integrate over $I=(0,T)$ to attain
  \begin{equation}\label{eq:abstract:adjoints:1}
    \begin{aligned}
      \scp{v}{u_h}_{L^2(I; H)} = \int_0^T &\dup{(\mathcal Cw_v^\prime)^\prime(t)}{u_h(t)} + \dup{A(t)w_v(t)}{u_h(t)} \\
      +  &\scp{Q^*(t)w_v(t)}{u_h(t)} - \scp{B^*(t)w_v^\prime(t)}{u_h(t)} - \scp{(B^*)^\prime(t)w_v(t)}{u_h(t)} \dt.
    \end{aligned}
  \end{equation}
  On the first expression in the integral on the right-hand side of~\eqref{eq:abstract:adjoints:1} we apply the integration by parts formula in the Gelfand triple $H\subset V^* \subset H^*$ (cf.~\cite{zeidler_nonlinear_1985}) and conclude
  \begin{align}
    \int_0^T \dup{(\mathcal Cw_v^\prime)^\prime(t)}{u_h(t)} \!\dt &= \scp{(\mathcal C w_v^\prime)(T)}{u_h(T)} - \scp{(\mathcal C w_v^\prime)(0)}{u_h(0)} - \int_0^T \scp{(\mathcal Cw_v^\prime)(t)}{u_h^\prime(t)} \!\dt \notag \\
    &= - \int_0^T \scp{(\mathcal C u_h^\prime)(t)}{w_v^\prime(t)} \dt \notag \\
    &= \scp{(\mathcal C u_h^\prime)(0)}{w_v(0)}-\scp{(\mathcal C u_h^\prime)(T)}{w_v(T)} + \int_0^T \scp{(\mathcal C u_h^\prime)^\prime(t)}{w_v(t)} \!\dt \notag \\
    &= \int_0^T \scp{(\mathcal C u_h^\prime)^\prime(t)}{w_v(t)} \dt. \label{eq:abstract:adjoints:2}
  \end{align}
  For expressions in~\eqref{eq:abstract:adjoints:1} that involve $B$ we can apply the product rule (Lemma 2.2 in~\cite{gerken_dynamic_2018}) to see
  \begin{align}
    \int_0^T &\scp{B^*(t)w_v^\prime(t)}{u_h(t)} + \scp{(B^*)^\prime(t)w_v(t)}{u_h(t)} \dt = \int_0^T \scp{(\mathcal B^* w_v)^\prime(t)}{u_h(t)} \dt \notag \\
    &= \scp{B^*(T) w_v(T)}{u_h(T)} - \scp{B^*(0) w_v(0)}{u_h(0)} - \int_0^T \scp{B^*(t) w_v(t)}{u_h^\prime(t)} \dt \notag \\
    &= - \int_0^T \scp{B(t)u_h^\prime(t)}{w_v(t)} \dt. \label{eq:abstract:adjoints:3}
  \end{align}
  Dealing with $Q$ and $A$ is simple because we only need to insert their adjoints, bearing in mind that $A(t)$ is self-adjoint. 
  Now we can use~\eqref{eq:abstract:adjoints:2} and~\eqref{eq:abstract:adjoints:3} in~\eqref{eq:abstract:adjoints:1} and obtain 
  \begin{align*}
    \scp{v}{u_h}_{L^2(I; H)} = \int_0^T &\dup{(\mathcal Cu_h^\prime)^\prime(t)}{w_v(t)} + \scp{B(t)u_h^\prime(t)}{w_v(t)} \\
    &+ \dup{A(t)u_h(t)}{w_v(t)} + \scp{Q(t)u_h(t)}{w_v(t)} \dt,
  \end{align*}
  which contains the left-hand side of the equation that is solved by $u_h(t)$, tested with $w_v(t)$. 
  We replace it by the corresponding right-hand side and arrive at
  \begin{equation*}
    \dup{\partial_x S(p)^*[v]}h = \scp{v}{\partial_x S(p)[h]}_{L^2(I; H)} = \scp{v}{u_h}_{L^2([0,T], H)} = \int_0^T \dup{g_x(u)[h](t)}{w_v(t)} \dt.
  \end{equation*}
  The assertion follows by stating the correct spaces for $h$ and the definition of $g_x$. 
  In the case of $\partial_C S(p)^*$ we can use the integration by parts formula once more to get rid of the time derivative on $h$. 
\end{proof}


With this result the application of $\partial S(p)^*[v]$ on $h\in X^{(k)}$ can be implemented efficiently because the effort of computing $w_v$ does not depend on $h$, and the operations that do depend on $h$ (multiplication, integration over $I$) are cheap. 
Unfortunately, we are not able to represent the adjoint completely using the $L^2(I; H)$ dot product because we do not know how the application e.g.\ of $\bar C(t)$ on $u'(t)$ looks like. 
This will be the case in sections~\ref{section:elasticity} and~\ref{section:maxwell}, where we apply this theory to actual PDEs and therefore have more information about the structure of the operators. 

As a direct consequence of the above theorem we can also describe the adjoint of $\partial S(p)$.

\begin{corollary}\label{corollary:abstract:adjoint_total}
  Let the assumptions of \Cref{theorem:abstract:adjoints} be fulfilled with $k\geq 2$. 
  The adjoint $(\partial S(p))^*\in \mathcal L(L^2(I; H), (X^{(k)})^*)$ of $\partial S(p) \in \mathcal L(X^{(k)}, L^2(I; H))$ at $v\in L^2(I; H)$, $h = (\bar A, \bar B, \bar C, \bar Q) \in X^{(k)}$ is given by 
  \begin{align*}
    \dup{(\partial S(p))^*[v]}{h}_{(X^{(k)})^* \times X^{(k)}} = \int_0^T &\scp{\bar C(t) u'(t)}{w_v'(t)} - \scp{\bar B(t)u'(t)}{w_v(t)} \\
    &- \dup{\left(\bar A(t)+\bar Q(t)\right)u(t)}{w_v(t)}  \dt.
  \end{align*}
\end{corollary}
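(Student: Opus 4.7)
The plan is to reduce the statement to what is already available. By \Cref{corollary:abstract:frechet_total}, the total Fréchet derivative $\partial S(p)[h]$ for $h=(\bar A,\bar B,\bar C,\bar Q)\in X^{(k)}$ decomposes additively into the four partial derivatives, i.e.
\begin{equation*}
  \partial S(p)[h] = \partial_A S(p)[\bar A] + \partial_B S(p)[\bar B] + \partial_C S(p)[\bar C] + \partial_Q S(p)[\bar Q].
\end{equation*}
Taking adjoints with respect to the $L^2(I;H)$ inner product on the range side therefore gives, for any $v\in L^2(I;H)$,
\begin{equation*}
  \dup{\partial S(p)^*[v]}{h} = \dup{\partial_A S(p)^*[v]}{\bar A} + \dup{\partial_B S(p)^*[v]}{\bar B} + \dup{\partial_C S(p)^*[v]}{\bar C} + \dup{\partial_Q S(p)^*[v]}{\bar Q},
\end{equation*}
where every dual pairing is on the appropriate component space.

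The first step is to verify that all four partial adjoints are available in the form stated in \Cref{theorem:abstract:adjoints}. For $\bar A,\bar C$ this needs $k\geq 2$, which is the present hypothesis; for $\bar B,\bar Q$ part (i) applies and gives the same type of representation. Note that under the assumption $k\geq 2$ the operator $Q\in W^{k,\infty}(I;\mathcal L(V,H))$ is automatically in $L^\infty(I;\mathcal L(V,H))$, and the condition $Q^*\in L^\infty(I;\mathcal L(V,H))$ carried over from \Cref{theorem:abstract:adjoints} guarantees that the adjoint state $w_v$ is well-defined and belongs to $Y$; I would state this as an implicit assumption inherited from the referenced theorem.

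The second step is simply to substitute the four formulas from \Cref{theorem:abstract:adjoints} into the additive decomposition. This yields
\begin{align*}
  \dup{\partial S(p)^*[v]}{h} = \int_0^T \Big[ &-\dup{\bar A(t)u(t)}{w_v(t)} - \scp{\bar B(t)u'(t)}{w_v(t)} \\
  &+ \scp{\bar C(t)u'(t)}{w_v'(t)} - \scp{\bar Q(t)u(t)}{w_v(t)} \Big]\dt,
\end{align*}
and grouping the $\bar A$- and $\bar Q$-terms into the single dual pairing $\dup{(\bar A(t)+\bar Q(t))u(t)}{w_v(t)}$ delivers precisely the claimed formula.

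There is essentially no obstacle: all the analytical work (regularity of $w_v$, integration by parts to move the time derivative off $\bar C$, Lipschitz continuity needed for the linearity of the total derivative) has already been performed in \Cref{theorem:abstract:regularity}, \Cref{theorem:abstract:adjoints} and \Cref{corollary:abstract:frechet_total}. The only minor point worth stating explicitly is that $\partial S(p)$ really does belong to $\mathcal L(X^{(k)},L^2(I;H))$ — this follows from $Y^{(k-2)}\hookrightarrow L^2(I;H)$ together with \Cref{corollary:abstract:frechet_total}, which justifies identifying the adjoint with an element of $(X^{(k)})^*$ on the other side of the duality.
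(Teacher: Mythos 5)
Your proposal is correct and matches the paper's intent exactly: the paper states this corollary as a "direct consequence" of \Cref{theorem:abstract:adjoints} without a written proof, and the intended argument is precisely your additive decomposition of $\partial S(p)[h]$ into the four partial derivatives followed by substitution of the four partial adjoint formulas. Your remarks on the well-definedness of $w_v$ and on $Y^{(k-2)}\hookrightarrow L^2(I;H)$ are appropriate but add nothing beyond what the referenced results already supply.
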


\subsection{Ill-posedness}

In particular for the numerical treatment of inverse problems it is important to know whether the task under consideration is ill-posed or not because this fact has a large impact on the applicable algorithms. 
Therefore we will discuss the ill-posedness of $S$ and also its linearization. 

We do not prove the (local) ill-posedness of $S$ directly, but formulate an intermediate result first that can also be used to show ill-posedness in a setting where not the operators themselves, but another parameter that influences them is sought. 
In this case it is important that the perturbations which have been used to show ill-posedness of $S$ lie in the image of the operator that maps searched-for parameters to the operators $A, B, C$ and $Q$.
For both situations we need to be aware in which circumstances the image of a sequence of parameters under $S$ converges.

\begin{theorem}\label{theorem:abstract:illposed_convergence}
Let $k\in \N\cup \{0\}$ and $f\in \mathcal F^{(k)}$.
If $k\geq 1$ then we also require $u_0=u_1=0$. 
Further let $p=(A, B, C, Q)\in D(S) \cap X^{(k)}$, $u= S(p)$ and $k_1=\max \{k,1\}$.
\begin{enumerate}[(i)]
    \item If $(R_j)_{j\in\N} \subset W^{k_1,\infty}(I; \mathcal L(V, H))$ satisfies $\norm{R_j}\leq \Gamma$ and $\mathcal R_j v \to 0$ in $H^k(I; H)$ for all $v\in Y^{(k)}$, then $S(A, B, C, Q+R_j)\to u$ in $Y^{(k)}$ when $j\to\infty$.
    \item If $(R_j)_{j\in\N} \subset W^{k_1,\infty}(I; \mathcal L(H))$ satisfies $\norm{R_j}\leq \Gamma$ and $\mathcal R_j v' \to 0$ in $H^k(I; H)$ for all $v\in Y^{(k)}$, then $S(A, B+R_j, C, Q)\to u$ in $Y^{(k)}$ when $j\to\infty$.
    \item Let $k>0$ and $(R_j)_{j\in\N} \subset W^{k+1,\infty}(I; \mathcal L(V, V^*))$ with $\norm{R_j}\leq \Gamma$, with $\Gamma$ small enough to guarantee $(A+R_j, B, C, Q)\in D(S)$ for all $j$, and $\mathcal R_j v \to 0$ in $H^{k}(I; V^*)$ for all $v\in Y^{(k)}$.
    Then $S(A+R_j, B, C, Q)\to u$ in $Y^{(k-1)}$ when $j\to\infty$.
    \item Let $k>0$ and $(R_j)_{j\in\N} \subset W^{k+1,\infty}(I; \mathcal L(H))$ with $\norm{R_j}\leq \Gamma$, with $\Gamma$ small genug enough to guarantee $(A, B, C+R_j, Q)\in D(S)$ for all $j$, and $(\mathcal R_j v')' \to 0$ in $H^{k-1}(I; H)$ for all $v\in Y^{(k)}$.
    Then $S(A, B, C+R_j, Q)\to u$ in $Y^{(k-1)}$ when $j\to\infty$.
\end{enumerate}
In each case the convergence is uniform in $(A, B, C, Q)$ on every bounded subset of $D(S)\cap X^{(k)}$.
\end{theorem}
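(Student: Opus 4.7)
For each of the four cases, set $u_j \assign S(p_j)$ with $p_j$ the perturbed tuple and consider $w_j\assign u_j-u$. The guiding trick is to keep the perturbation $\mathcal R_j$ on the \emph{left}-hand side of the equation for $w_j$: this way the right-hand side features $\mathcal R_j$ acting on the \emph{unperturbed} solution $u\in Y^{(k)}$, to which the hypothesis applies directly, while the left-hand side defines operators whose $X^{(k)}$-norm and coercivity constants are controlled uniformly in $j$ by the bound $\|R_j\|\leq\Gamma$ (together with the smallness of $\Gamma$ in (iii) and (iv)).

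Subtracting the equations satisfied by $u_j$ and $u$ gives, with homogeneous initial conditions in each instance,
\begin{align*}
    (\mathrm{i})\colon\ &(\mathcal Cw_j')' + \mathcal Bw_j' + \mathcal Aw_j + (\mathcal Q+\mathcal R_j)w_j = -\mathcal R_j u, \\
    (\mathrm{ii})\colon\ &(\mathcal Cw_j')' + (\mathcal B+\mathcal R_j)w_j' + \mathcal Aw_j + \mathcal Qw_j = -\mathcal R_j u', \\
    (\mathrm{iii})\colon\ &(\mathcal Cw_j')' + \mathcal Bw_j' + (\mathcal A+\mathcal R_j)w_j + \mathcal Qw_j = -\mathcal R_j u, \\
    (\mathrm{iv})\colon\ &((\mathcal C+\mathcal R_j)w_j')' + \mathcal Bw_j' + \mathcal Aw_j + \mathcal Qw_j = -(\mathcal R_j u')'.
\end{align*}
The initial value $((\mathcal C+\mathcal R_j)w_j')(0)=0$ in (iv) is consistent because $u_0=u_1=0$ forces $u'(0)=0$, so $(\mathcal R_j u')(0)=0$. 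Applying \Cref{theorem:abstract:regularity} with regularity index $k$ in (i)/(ii) and $k-1$ in (iii)/(iv) produces a constant $\Lambda$ that is uniformly bounded in $j$ (depending only on $\Gamma$ and on $p$ in $X^{(k)}$) and yields
\begin{align*}
    (\mathrm{i})\colon\ \|w_j\|_{Y^{(k)}} &\leq \Lambda\,\|\mathcal R_j u\|_{H^k(I;H)},
    & (\mathrm{ii})\colon\ \|w_j\|_{Y^{(k)}} &\leq \Lambda\,\|\mathcal R_j u'\|_{H^k(I;H)}, \\
    (\mathrm{iii})\colon\ \|w_j\|_{Y^{(k-1)}} &\leq \Lambda\,\|\mathcal R_j u\|_{H^{k}(I;V^*)},
    & (\mathrm{iv})\colon\ \|w_j\|_{Y^{(k-1)}} &\leq \Lambda\,\|(\mathcal R_j u')'\|_{H^{k-1}(I;H)}.
\end{align*}
Since $u\in Y^{(k)}$, the hypothesis on $(R_j)$ forces the right-hand side of each bound to tend to zero, and the claim follows.

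\textbf{Main obstacle.} The delicate point is the asserted uniformity in $p$ over bounded subsets $K\subset D(S)\cap X^{(k)}$: the constants $\Lambda$ above are uniform on $K$, and $S(K)$ is bounded in $Y^{(k)}$ by \Cref{theorem:abstract:regularity}, but the hypothesis on $(R_j)$ is only a \emph{pointwise} statement, and in infinite dimensions a bounded set need not be precompact. The resolution is to combine the uniform bound $\|R_j\|\leq\Gamma$ with the local Lipschitz continuity of $S$ from \Cref{theorem:abstract:lipschitz}: for $p,p_0\in K$ one has $\|\mathcal R_j u_p\|\leq \|\mathcal R_j u_{p_0}\| + \Gamma\,\|u_p-u_{p_0}\|_{Y^{(k)}} \leq \|\mathcal R_j u_{p_0}\| + \Gamma L\,\|p-p_0\|_{X^{(k)}}$, so the uniformity reduces to splitting $K$ into small $X^{(k)}$-balls around a finite collection of reference points and applying the pointwise hypothesis at each of them. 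Apart from this somewhat subtle bookkeeping, the argument is a direct energy estimate.
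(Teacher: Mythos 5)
Your core argument is the paper's argument: for each case you form $w_j = u_j - u$, keep the perturbation $\mathcal R_j$ inside the differential operator on the left-hand side so that the source term becomes $\mathcal R_j$ applied to the \emph{unperturbed} solution $u\in Y^{(k)}$, and then apply the energy estimate of \Cref{theorem:abstract:regularity} with constants that stay bounded in $j$ thanks to $\norm{R_j}\leq\Gamma$ (and the smallness of $\Gamma$ in (iii)/(iv)). The four resulting estimates are exactly those in the paper, and the pointwise hypothesis on $(R_j)_j$ finishes each case; your remark on the initial condition in (iv) is a correct (and welcome) detail.

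The one genuine gap is in your treatment of the uniformity claim. You correctly diagnose the obstacle — pointwise convergence of the $\mathcal R_j$ together with the uniform bound $\norm{R_j}\leq\Gamma$ yields uniform convergence only on totally bounded families of arguments, and a bounded subset of an infinite-dimensional space need not be precompact — but your proposed resolution then covers the bounded set $K\subset D(S)\cap X^{(k)}$ by \emph{finitely many} small $X^{(k)}$-balls. That is precisely total boundedness of $K$, which fails for a general bounded subset of the infinite-dimensional space $X^{(k)}$; so the $\epsilon$-net argument proves uniformity on precompact subsets, not on bounded ones, and does not close the gap you yourself identified. (To be fair, the paper dispatches this point in a single sentence — the convergence is uniform "because $\lambda_j$ and $u$ depend continuously on the operators" — which by itself also only gives uniformity on compact sets. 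Your Lipschitz reduction via \Cref{theorem:abstract:lipschitz} is the right first step, but some additional compactness input on the family $\{S(p):p\in K\}$ in the space where $\mathcal R_j$ acts would be needed to make uniformity over genuinely bounded sets rigorous.)
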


\begin{proof}\hfill
  \begin{enumerate}[(i)]
    \item We start with $Q$. 
    Let $u_j = S(A, B, C, Q+R_j)$. 
    The fields $u$ and $u_j$ solve 
    \begin{align*}
      (\mathcal Cu')' + \mathcal Bu' + (\mathcal Q+ \mathcal R_j)u + \mathcal Au &= f + \mathcal R_j u \\
      (\mathcal Cu_j')' + \mathcal Bu_j' + (\mathcal Q + \mathcal R_j)u_j + \mathcal Au_j &= f
    \end{align*}
    with the same initial conditions. 
    Hence, $w_j=u-u_j$ is a solution to
    \begin{equation*}
      (\mathcal Cw_j')' + \mathcal Bw_j' + (\mathcal Q+\mathcal R_j)w_j + \mathcal Aw_j = \mathcal R_j u 
    \end{equation*}
    with homogeneous initial conditions and satisfies
    \begin{equation}\label{eq:abstract:illposed_convergence:1}
      \norm{w_j}_{Y^{(k)}}^2 \leq \lambda_j^2 \norm{\mathcal R_j u}_{H^k(I; H)}^2
    \end{equation}
    with $\lambda_j > 0$ that stays bounded when $j\to\infty$ (the constants in the energy estimates are continuous and the $R_j$ are bounded). 
    From the properties of $R_j$ we deduce $\norm{w_j}\to 0$ when $j\to\infty$. 
    This convergences is uniform in $A, B, C$ and $Q$ because both $\lambda_j$ and $u$ depend continuously on them.
    \item The proof for $B$ can be done in the same fashion, instead of~\eqref{eq:abstract:illposed_convergence:1} we obtain 
    \begin{equation*}
      \norm{w_j}_{Y^{(k)}}^2 \leq \lambda_j^2 \norm{\mathcal R_j u'}_{H^k(I; H)}^2 \to 0.
    \end{equation*}
    \item For the other two operators we lose one order of regularity because the right-hand side of the equation that is solved by $w_j=u-u_j$ is less regular. 
    When we perturb $A$ the $w_j$ satisfy
    \begin{equation*}
      \norm{w_j}_{Y^{(k-1)}}^2 \leq \lambda_j^2 \norm{\mathcal R_j u}_{H^{k}(I; V^*)}^2,
    \end{equation*}
    which vanishes in the limit $j\to\infty$.
    \item In the case of $C$ the estimate reads $\norm{w_j}_{Y^{(k-1)}}^2 \leq \lambda_j^2 \norm{(\mathcal R_j u')'}_{H^{k-1}(I; H)}^2 \to 0$. \qedhere
  \end{enumerate}
\end{proof}

The uniform convergence is important when a searched for quantity influences not only one, but multiple operators.
Now we show that such sequences $R_j$ always exist (even in this general framework), and conclude that the reconstruction of the operators is indeed an ill-posed problem. 

\begin{lemma}\label{lemma:abstract:illposed_helper}
  There exist constants $\Gamma > \gamma > 0$ and sequences of operators
  \begin{enumerate}[(i)]
    \item $(X_k)_{k\in\N} \subset \Lsa(H)$ such that $X_k v \to 0$ in $H$ for all fixed $v\in H$ and $\Gamma \geq \norm{X_k} \geq \gamma $ in $\Lsa(H)$ and $\Lsa(V, V^*)$,
    \item $(Y_k)_{k\in\N} \subset \mathcal L(V)$ with $Y_k v \to 0$ in $V$ for all fixed $v\in V$ and $\Gamma \geq \norm{Y_k}\geq \gamma$ in $\mathcal L(V)$ and $\mathcal L(V, H)$.
  \end{enumerate}
\end{lemma}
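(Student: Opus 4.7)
The plan is to build both sequences from explicit shift and projection operators relative to orthonormal bases adapted to the Gelfand triple $V \subset H \subset V^*$. For (ii), I would fix an orthonormal basis $(\phi_j)_{j\in\N}$ of $V$ consisting of eigenvectors of the compact self-adjoint operator $L\in\mathcal L(V)$ defined by $\scp{Lu}{v}_V = \scp{u}{v}$, with eigenvalues $\mu_j>0$ decreasing to $0$; note that the $\phi_j$ are then also $H$-orthogonal with $\|\phi_j\|_H^2 = \mu_j$. Define $Y_k\in\mathcal L(V)$ to be the $k$-fold backward shift on this basis, $Y_k\phi_l = \phi_{l-k}$ for $l>k$ and $Y_k\phi_l=0$ otherwise. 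Pointwise convergence $Y_k v\to 0$ in $V$ is immediate from the tail identity $\|Y_k v\|_V^2 = \sum_{l>k}|\scp{v}{\phi_l}_V|^2$, and $\|Y_k\|_{\mathcal L(V)}=1$ for every $k$. A short calculation gives $\|Y_k v\|_H^2 = \sum_{j}|\scp{v}{\phi_{j+k}}_V|^2\mu_j$, and maximizing the resulting Rayleigh-type quotient over the $V$-unit ball yields $\|Y_k\|_{\mathcal L(V,H)} = \sqrt{\mu_1}$ independently of $k$. Taking $\gamma = \tfrac12\min(1,\sqrt{\mu_1})$ and $\Gamma = 2$ then settles (ii).

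For (i), I would mirror the construction on the $H$-side: take the $H$-orthonormal basis $(f_j) = (\phi_j/\sqrt{\mu_j})$ and let $X_k$ be the orthogonal $H$-projection onto $\overline{\mathrm{span}}_H(f_j : j\geq k)$. Then $X_k\in\Lsa(H)$, $\|X_k\|_{\mathcal L(H)}=1$, and $X_k v\to 0$ in $H$ for each $v\in H$ by the usual Bessel-tail argument. The upper bound $\|X_k\|_{\mathcal L(V,V^*)}\leq 1$ follows from the chain $\|X_k h\|_{V^*}\leq \|X_k h\|_H\leq \|h\|_H\leq \|h\|_V$ for $h\in V$, using $\|\cdot\|_H\leq\|\cdot\|_V$.

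The hard part, and the point at which the argument becomes delicate, is the uniform lower bound $\|X_k\|_{\mathcal L(V,V^*)}\geq \gamma$ in (i). A pointwise convergent, uniformly bounded sequence of operators on $H$ converges uniformly on compact subsets of $H$; since compactness of the embedding $V\hookrightarrow H$ makes the closed $V$-unit ball relatively compact in $H$, this at first sight forces $\|X_k\|_{\mathcal L(V,H)}\to 0$ and hence $\|X_k\|_{\mathcal L(V,V^*)}\to 0$, which is in tension with the claim. I would therefore expect the naive projection to be insufficient and the proof to refine it — e.g.\ by weighting the tail projection so that a fixed share of the $V^*$-pairing survives, or by mixing the $H$- and $V$-structures so that $X_k v\to 0$ in $H$ pointwise is preserved while the bilinear form $\dup{X_k\cdot}{\cdot}$ retains a non-degenerate component on a subspace of $V$ whose $V$- and $V^*$-geometries are comparable. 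Verifying that such a refinement can be made self-adjoint on both slots, uniformly bounded, and still pointwise null in $H$ is where the real work sits; the remaining verifications (the upper bounds, pointwise convergence, self-adjointness) are routine computations in the chosen bases.
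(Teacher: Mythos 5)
Your construction for (ii) is correct: the backward shift along an eigenbasis of $\iota^*\iota$ (with $\map{\iota}{V}{H}$ the embedding) has $\norm{Y_k}_{\mathcal L(V)}=1$, $\norm{Y_k}_{\mathcal L(V,H)}=\sqrt{\mu_1}$ and is pointwise null in $V$; the paper reaches the same conclusion more cheaply with the rank-one operators $Y_k v=\scp{v}{\psi_k}_V\,\psi_1$ for an arbitrary orthonormal basis $(\psi_k)$ of $V$. Likewise your tail projections settle everything in (i) \emph{except} the lower bound in $\Lsa(V,V^*)$, which you leave open.

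That remaining point is not a refinement waiting to be found: the obstruction you describe is already a complete proof that \emph{no} sequence with all the properties listed in (i) can exist. By Banach--Steinhaus a pointwise null sequence in $\mathcal L(H)$ is uniformly bounded, hence converges to zero uniformly on every totally bounded subset of $H$; the closed unit ball of $V$ is such a set because $V\hookrightarrow H$ is compact, so $\norm{X_k}_{\mathcal L(V,H)}\to 0$, and since $\norm{h}_{V^*}\le\norm{h}_H$ for $h\in H$ also $\norm{X_k}_{\mathcal L(V,V^*)}\to 0$, contradicting $\norm{X_k}_{\mathcal L(V,V^*)}\ge\gamma$. So part (i) of the lemma is false as stated, and you should assert this rather than defer the construction. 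For comparison, the paper takes $X_k v=\scp{v}{\phi_k}_H\phi_k$ for an $H$-orthonormal basis $(\phi_k)\subset V$ and infers $\norm{X_k}_{\mathcal L(V,V^*)}\ge 1$ by testing with $u=v=\phi_k$; that step forgets to normalize by $\norm{\phi_k}_V^2$, which is unbounded for an $H$-orthonormal system lying in $V$. In fact $\norm{X_k}_{\mathcal L(V,V^*)}=\norm{\phi_k}_{V^*}^2\to 0$, since $\phi_k\rightharpoonup 0$ in $H$ and $H\hookrightarrow V^*$ is compact. The defective half of (i) is used only in the $A$-case of \Cref{theorem:abstract:illposed_locally}; that case is repairable because \Cref{theorem:abstract:illposed_convergence}(iii) only requires $\mathcal R_j v\to 0$ in $H^k(I;V^*)$ for $v\in Y^{(k)}$, so one may instead perturb $A$ by the self-adjoint rank-one operators defined by $\dup{R_k v}{u}=\scp{v}{\psi_k}_V\scp{u}{\psi_k}_V$, which satisfy $\norm{R_k}_{\mathcal L(V,V^*)}=1$ and $R_k v\to 0$ in $V^*$ for every $v\in V$ --- but these do not define elements of $\Lsa(H)$, so the lemma must be split into separate statements for the $\Lsa(H)$- and $\Lsa(V,V^*)$-perturbations.
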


\begin{proof}
  From the pointwise convergence (and therefore boundedness) of the operators we can already deduce the existence of the upper bound $\Gamma$ using the uniform boundedness principle. 
  \begin{enumerate}[(i)]
    \item Let $(\phi_j)_{j\in\N}\subset V$ denote an orthonormal basis of $H$ (possible because $V$ is dense in $H$). 
    We use it to define $X_k$ for $v\in H$ as
    \begin{equation*}
      X_k v = \scp{v}{\phi_k}_H \phi_k.
    \end{equation*}
    Apparently $\norm{X_k v}_H = |\scp{v}{\phi_k}_H| \to 0$ for $v\in H$ and $\norm{X_k}_{\mathcal L(H)}\leq 1$. 
    By evaluating $X_k$ at $v=\phi_k$ we can also see $\norm{X_k}\geq 1$.
    For $u, v\in V$ the identity $\dup{X_k v}{u}_{V^*\times V} = \scp v{\phi_k}_H \dup{\phi_k}{u}_{V^*\times V} = \scp v{\phi_k}_H \scp{u}{\phi_k}_H$ holds, which implies $\norm{X_k}_{\mathcal L(V, V^*)} \leq 1$ and due to $\phi_k\in V$ we may set $u=v=\phi_k$ to infer $\norm{X_k}_{\mathcal L(V, V^*)} \geq 1$. 
    \item For $\mathcal L(V, V)$ we could use the same $X_k$ if we replace $\phi_k$ by an orthonormal basis $(\psi_k)_{k\in\N}$ of $V$, but this sequence would not be suitable for $\mathcal L(V, H)$. 
    This is due to compactness of $V$ in $H$ since ONBs of $V$ convergence strongly to zero in $H$ and $V^*$. 
    Hence, we modify the definition slightly to arrive at
    \begin{equation*}
      Y_k v = \scp{v}{\psi_k}_V \psi_1,
    \end{equation*}
    which works in this case because $Y_k v \to 0$ for $v\in V$, but at the expense that $Y_k$ is not self-adjoint. \qedhere
  \end{enumerate}
\end{proof}

Finally, we can show local ill-posedness of $S$, even with data in $Y^{(k)}$. 
Of course this implies the ill-posedness in the case of data belonging to $L^2(S; H)$ because of the weaker norm.

\begin{theorem}\label{theorem:abstract:illposed_locally}
Let $k\in \N\cup \{0\}$ and $f\in \mathcal F^{(k)}$.
If $k\geq 1$ then we assume $u_0=u_1=0$. 
Let $(A, B, C, Q)\in D(S) \cap X^{(k)}$.
  \begin{enumerate}[(i)]
    \item The tasks of finding $B$ or $Q$ such that $S(A, B, C, Q)=y\in Y^{(k)}$ holds are locally ill-posed in every $B$ and $Q$.
    \item For all $k\in\N$ the tasks of finding $A$ or $C$ such that $S(A, B, C, Q)=y\in Y^{(k-1)}$ holds are locally ill-posed in every $A$ and $C$.
\end{enumerate}
\end{theorem}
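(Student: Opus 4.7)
My plan is to reduce the claim to the two preparatory results already proved: \Cref{theorem:abstract:illposed_convergence}, which converts a suitable vanishing property of a perturbation sequence into convergence of the forward images, and \Cref{lemma:abstract:illposed_helper}, which produces bounded sequences of operators with pointwise convergence to zero but norms bounded \emph{below}. Local ill-posedness in a parameter $p_0$ means that in every neighborhood of $p_0$ in $X^{(k)}$ we can find parameters $p_j\neq p_0$ with $p_j \not\to p_0$ but $S(p_j)\to S(p_0)$ in the data space. It therefore suffices, for each of the four operator slots separately, to construct perturbations $R_j$ in the relevant Banach space that (a) are bounded below in norm, so that rescaling them by $\eps>0$ yields a sequence of size comparable to $\eps$ not converging to $0$, and (b) satisfy the hypothesis of the corresponding part of \Cref{theorem:abstract:illposed_convergence}.

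For (i), i.e.\ the $Q$- and $B$-directions, I would take the \emph{constant-in-time} perturbations $R_j \equiv \eps Y_j$ and $R_j\equiv \eps X_j$ respectively, with $X_j, Y_j$ from \Cref{lemma:abstract:illposed_helper}. Time-independence places $R_j$ in $W^{m,\infty}(I;\cdot)$ for every $m$, and the lower bound $\norm{R_j}\geq \eps\gamma$ transfers to the $W^{k_1,\infty}$-norm. For the convergence hypotheses one computes $(\mathcal R_j v)^{(l)} = R_j v^{(l)}$ and $(\mathcal R_j v')^{(l)} = R_j v^{(l+1)}$, and invokes dominated convergence: pointwise, $R_j v^{(l)}(t)\to 0$ in $H$ by \Cref{lemma:abstract:illposed_helper}, and the uniform bound $\Gamma\,\norm{v^{(l)}(t)}_V$ (resp.\ $\Gamma\,\norm{v^{(l)}(t)}_H$) supplies an $L^2(I)$-majorant because $v\in Y^{(k)}$. \Cref{theorem:abstract:illposed_convergence} (i) and (ii) then yield $S(A,B,C,Q+R_j)\to u$ and $S(A,B+R_j,C,Q)\to u$ in $Y^{(k)}$, completing the proof for $Q$ and $B$.

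For (ii) one proceeds identically with the self-adjoint sequence $X_j$ of \Cref{lemma:abstract:illposed_helper}(i), used as a constant-in-time perturbation of $A$ (viewed via $X_j \in \Lsa(V,V^*)$) or of $C$ (viewed via $X_j\in\Lsa(H)$); one just first chooses $\eps$ small enough that $A+\eps X_j$, respectively $C+\eps X_j$, remains in $D(S)$, which is possible because the coercivity margin at a point of $D(S)$ is strictly positive by \Cref{lemma:abstract:lsa_interior} and $\norm{X_j}\leq\Gamma$ uniformly. The hypotheses of parts (iii) and (iv) of \Cref{theorem:abstract:illposed_convergence} reduce, by time-independence of $R_j$, to $R_j v^{(l)}\to 0$ in $V^*$ (for $A$) and $R_j v^{(l+2)}\to 0$ in $H$ (for $C$), both covered by the same dominated-convergence argument, now using the embedding $H\hookrightarrow V^*$ and the uniform bound $\norm{X_j}_{\mathcal L(V,V^*)}\leq \Gamma$ in the first case. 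The main subtlety — and the reason \Cref{lemma:abstract:illposed_helper} is stated as it is — is that the perturbations must live in the \emph{exact} Banach-space slots appearing in $X^{(k)}$ and be bounded below in precisely those norms, including the self-adjointness restriction for $A$ and $C$; once this is ensured, the rest is bookkeeping.
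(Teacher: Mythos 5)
Your proposal is correct and follows essentially the same route as the paper: time-independent perturbations $\tilde r\,Y_j$ (for $Q$), $\tilde r\,X_j$ (for $B$, $A$, $C$) built from \Cref{lemma:abstract:illposed_helper}, a dominated-convergence check of the hypotheses of \Cref{theorem:abstract:illposed_convergence} using the pointwise convergence and the uniform bound $\Gamma$, and openness of $D(S)$ to keep the perturbed $A$ and $C$ admissible. The only cosmetic difference is your explicit appeal to $H\hookrightarrow V^*$ for the $A$-case, which the paper leaves implicit.
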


\begin{proof}
  We prove the claim by explicitly constructing sequences of operators that do not converge, but stay arbitrary close to $p=(A, B, C, Q)$ such that their image under $S$ converges to $S(p)$. 
  Let $r>0$ be fixed.  
  \begin{enumerate}[(i)]
    \item We start with $Q$ and set $Q_j(t)=Q(t)+\tilde r Y_j$ with $\tilde r \assign r / \Gamma$ and $\Gamma>0$, $Y_j\in \mathcal L(V, H)$ as in \Cref{lemma:abstract:illposed_helper}.
    This way $Q_j\in B(Q, r)$ and $Q_j\not\to Q$ in $W^{k,\infty}(I; \mathcal L(V, H))$.
    We show that $R_j(t)\assign \tilde r Y_j$ satisfies the requirements of \Cref{theorem:abstract:illposed_convergence} (i).
    For $v\in Y^{(k)}$ we have
    \begin{equation}\label{eq:abstract:illposed_locally:1}
      \norm{\tilde r\mathcal Y_j v}_{H^k(I; H)}^2 = \tilde r^2 \sum_{i=0}^k \int_0^T \norms{Y_j v^{(i)}(t)}_{H}^2 \dt.
    \end{equation}
    Every one of the finitely many integrands converges pointwise to zero and is bounded by $\Gamma^2 \norm{v^{(i)}(t)}_V^2 \in L^1(I)$, hence the whole sum vanishes in the limit $j\to\infty$. \\
    For $B$ we have to use $(X_j)_{j\in\N}\subset \mathcal L(H)$ from \Cref{lemma:abstract:illposed_helper} as the perturbation. 
    Instead of~\eqref{eq:abstract:illposed_locally:1} we obtain 
    \begin{equation*}
      \norm{\tilde r\mathcal X_j v'}_{H^k(I; H)}^2 = \tilde r^2 \sum_{i=0}^k \int_0^T \norms{X_j v^{(i+1)}(t)}_{H}^2 \dt,
    \end{equation*}
    which converges to zero for similar reasons. 
    The convergence of $S(p_j)$ to $S(p)$ then follows from \Cref{theorem:abstract:illposed_convergence} (ii).
    \item We set $A_j(t) = A(t) + \tilde r X_j$ and $C_j(t) = C(t) + \tilde r X_j$, still with $\tilde r=r / \Gamma $. 
    Since $D(S)$ is open the resulting $p_j$ belong to $D(S)$ as long as $r$ is sufficiently small. 
    For every $v\in Y^{(k-1)}$ we have
    \begin{equation*}
       \norm{\tilde r\mathcal X_j v}_{H^{k}(I; V^*)}^2 = \tilde r^2 \sum_{i=0}^{k} \int_0^T \norms{X_j v^{(i)}(t)}_{V^*}^2 \dt,
    \end{equation*}
    which converges to zero in the limit.
    For $C$ the reasoning is similar, we obtain
    \begin{equation*}
      \norm{(\tilde r\mathcal X_j v')'}_{H^{k}(I; H)}^2 = \tilde r^2 \sum_{i=0}^{k} \int_0^T \norms{X_j v^{(i+2)}(t)}_{H}^2 \dt
    \end{equation*}
    and use that $v\in H^{k+2}(I; H)$. 
    In both cases we can apply \Cref{theorem:abstract:illposed_convergence}.\qedhere
  \end{enumerate}
\end{proof}

For convenience we used sequences of perturbations that are time independent, which confirms that the corresponding \enquote{static} problems are ill-posed as well. 
In the case of time-dependent functions we would have to ensure that they are smooth enough to belong to $X^{(k)}$, which requires some work. 
We will showcase this when we apply the abstract results to the elastic wave equation. 

Ill-posedness of the linearized problem can be concluded from the local ill-posedness of $S$ because we showed its (local) Lipschitz continuity in \Cref{lemma:abstract:frechet_lipschitz}, but we can also show it directly using compact embeddings, which are established in the following lemma.


\begin{lemma}
  Given $1\leq p\leq \infty$, $1\leq q\leq p$ with $q < \infty$ the embeddings $W^{k,p}(I; V)\cap W^{k+1,p}(I; H)\hookrightarrow W^{k,q}(I; H)$ and $W^{k,\infty}(I; V)\cap W^{k+1,\infty}(I; H)\hookrightarrow C^k(I; H)$ are compact. 
\end{lemma}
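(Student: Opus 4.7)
The statement is essentially a higher-order version of the Aubin--Lions--Simon compactness theorem, so the plan is to reduce everything to the standard $k=0$ case and then iterate over derivatives. The standing assumption that $V \hookrightarrow H$ is compact (made at the beginning of Section~\ref{section:abstract}) is exactly what is needed to feed into Simon's theorem.

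First, I would handle the base case $k=0$. Simon's compactness theorem (applied with the triple $V \hookrightarrow H \hookrightarrow H$, where the first embedding is compact) yields that $L^p(I;V) \cap W^{1,p}(I;H)$ embeds compactly into $L^p(I;H)$ for $1 \leq p < \infty$, and that $L^\infty(I;V) \cap W^{1,\infty}(I;H)$ embeds compactly into $C(I;H)$. Since $I$ is bounded and $q \leq p$, Hölder's inequality gives $L^p(I;H) \hookrightarrow L^q(I;H)$ continuously, so the first embedding for $k=0$ into $L^q(I;H)$ follows by composition.

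Next, for general $k \geq 1$, let $(u_n)$ be a bounded sequence in $W^{k,p}(I;V) \cap W^{k+1,p}(I;H)$. For every $j \in \{0,1,\dots,k\}$ the derivative $u_n^{(j)}$ is bounded in $W^{k-j,p}(I;V) \cap W^{k-j+1,p}(I;H)$, which continuously embeds into $L^p(I;V) \cap W^{1,p}(I;H)$ because $k-j \geq 0$ and $k-j+1 \geq 1$. By the base case, each sequence $(u_n^{(j)})$ admits a subsequence converging in $L^q(I;H)$ (resp.\ in $C(I;H)$ in the $L^\infty$--case). A diagonal extraction yields a single subsequence along which $u_n^{(j)}$ converges in $L^q(I;H)$ for every $j = 0,\dots,k$ simultaneously. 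Since the norm of $W^{k,q}(I;H)$ is just the sum of the $L^q(I;H)$--norms of the first $k$ derivatives (and analogously for $C^k(I;H)$ in the uniform case), this gives convergence in the target space, establishing compactness.

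The proof is essentially bookkeeping on top of the Aubin--Lions--Simon lemma; the only point that requires a small amount of care is the verification that, for each $j \leq k$, the derivative $u_n^{(j)}$ lands in the correct intersection space to which Simon's theorem applies, and that the diagonal extraction preserves convergence of all lower-order derivatives. No genuinely hard step is expected.
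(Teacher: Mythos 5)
Your argument is correct and takes essentially the same route as the paper, whose entire proof is the remark that the claim ``follows by induction from the Aubin--Lions Lemma''; your write-up simply supplies that iteration over the derivatives $u_n^{(j)}$, $j=0,\dots,k$, together with the base case. The only refinement worth noting is that you correctly invoke Simon's version of the compactness theorem, which is what actually covers the endpoint $p=\infty$ with target $C(I;H)$.
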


\begin{proof}
  Follows by induction from the Aubin-Lions Lemma, see~\cite{aubin_theoreme_1963}.
\end{proof}

We apply this to the derivatives of $S$.

\begin{lemma}\label{lemma:abstract:frechet_compact}
  Let $k\in \N\cup \{0\}$, $f\in \mathcal F^{(k)}$ and $u_0=u_1=0$ if $k\geq 1$. 
  Further, let $p \in D(S) \cap X^{(k)}$ and $k_1 \assign \max \{k,1\}$.
  \begin{enumerate}[(i)]
    \item For $\map S{D(S) \cap X^{(k)}}{Z}$ with $Z=W^{j,p}(I; H)$ or $Z=C^j(I; H)$ with $0\leq j \leq k$ and $1\leq p< \infty$ the derivatives $\partial_Q S(p) \in \mathcal L(W^{k_1,\infty}(I; \mathcal L(V, H)), Z)$ and $\partial_B S(p)\in\mathcal L(W^{k_1,\infty}(I; \mathcal L(H)), Z)$ are compact operators.
    \item If $k\geq 2$ and $\map S{X^{(k)}}{Z}$ with $Z=W^{j,p}(I; H)$ or $Z=C^j(I; H)$ with $0\leq j \leq k-1$ and $1\leq p< \infty$ the operators $\partial_A S(p)\in \mathcal L(W^{k+1,\infty}(I; \Lsa(V, V^*)), Z)$ and
    $\partial_C S(p)\in \mathcal L(W^{k+1,\infty}(I; \Lsa(H)), Z)$ are compact.
  \end{enumerate}
\end{lemma}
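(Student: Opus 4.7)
My plan is to obtain the compactness of the derivatives as a composition of a continuous map (into the high-regularity space $Y^{(k)}$ or $Y^{(k-1)}$) with the compact embedding provided by the preceding lemma. The bookkeeping is therefore mostly about identifying the right intermediate space in each case.

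First, I would observe that even though \Cref{theorem:abstract:frechet_partial} only establishes that $\partial_A S(p)$ and $\partial_C S(p)$ are \emph{Fréchet} derivatives when $S$ is viewed as a map into $Y^{(k-2)}$, the solution $u_h$ of the linearized equation is, by construction (via the bilinear forms $g_A, g_B, g_C, g_Q$ listed before \Cref{theorem:abstract:lipschitz} and the regularity result \Cref{theorem:abstract:regularity}), an element of $Y^{(k)}$ for $x\in\{B,Q\}$ and of $Y^{(k-1)}$ for $x\in\{A,C\}$, and the map $h\mapsto u_h$ is continuous into that higher-regularity target. So in every case we have continuous linear operators
\begin{align*}
  \partial_Q S(p) &\in \mathcal L(W^{k_1,\infty}(I;\mathcal L(V,H)),\, Y^{(k)}), \\
  \partial_B S(p) &\in \mathcal L(W^{k_1,\infty}(I;\mathcal L(H)),\, Y^{(k)}), \\
  \partial_A S(p) &\in \mathcal L(W^{k+1,\infty}(I;\Lsa(V,V^*)),\, Y^{(k-1)}), \\
  \partial_C S(p) &\in \mathcal L(W^{k+1,\infty}(I;\Lsa(H)),\, Y^{(k-1)}),
\end{align*}
with the last two only being asserted in the situation $k\geq 2$.

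Second, I would invoke the embedding lemma just established. Since $I$ is bounded, $Y^{(k)}=W^{k,\infty}(I;V)\cap W^{k+1,\infty}(I;H)$ embeds continuously into $W^{k,p}(I;V)\cap W^{k+1,p}(I;H)$ for every finite $p$, and from there the lemma gives a compact embedding into $W^{k,q}(I;H)$ for any $1\leq q\leq p$ with $q<\infty$; the lemma also gives the compact embedding $Y^{(k)}\hookrightarrow C^k(I;H)$ directly. Since $W^{k,q}(I;H)\hookrightarrow W^{j,q}(I;H)$ and $C^k(I;H)\hookrightarrow C^j(I;H)$ continuously for every $0\leq j\leq k$, this yields a compact embedding $Y^{(k)}\hookrightarrow Z$ for every admissible choice of $Z$ in part~(i). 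Exactly the same argument with $k$ replaced by $k-1$ provides the compact embedding $Y^{(k-1)}\hookrightarrow Z$ for every admissible $Z$ in part~(ii).

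Finally, the composition of a continuous linear operator with a compact operator is compact, so each of the four derivatives, regarded as a map into $Z$, factors as a bounded operator into $Y^{(k)}$ (resp.\ $Y^{(k-1)}$) followed by a compact embedding, hence is itself compact. There is really no major obstacle here; the only point that could look delicate is making sure one is allowed to use the higher-regularity codomain for $\partial_A$ and $\partial_C$, but this is exactly the content of the remark following \Cref{theorem:abstract:frechet_partial} and does not require the Fréchet-differentiability to be valid in that stronger norm — only that the image $u_h$ lies in $Y^{(k-1)}$ and depends linearly and continuously on $h$, which is immediate from the energy estimate of \Cref{theorem:abstract:regularity} applied to the linearized equation.
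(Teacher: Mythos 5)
Your proposal is correct and follows essentially the same route as the paper: factor each partial derivative as a continuous linear map into the higher-regularity space ($Y^{(k)}$ for $B,Q$; $Y^{(k-1)}$ for $A,C$, noting that this holds even though Fréchet-differentiability is only available with codomain $Y^{(k-2)}$) composed with the compact embedding of that space into $Z$ from the preceding Aubin--Lions lemma. The paper's proof is just a terser version of the same argument.
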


\begin{proof}\hfill
  \begin{enumerate}[(i)]
    \item Follows from the compactness of $Y^{(k)}\hookrightarrow Z$.
    \item Note that $Y^{(k-2)}$ is continuously embedded in $Z$, i.e. $\map S{X^{(k)}}{Z}$ is Fréchet-dif\-fer\-en\-tia\-ble w.r.t.\ $A$ and $C$. 
    Additionally $\partial_A S(p)$ and $\partial_C S(p)$ map into $Y^{(k-1)}$, which has an compact embedding into $Z$.\qedhere
  \end{enumerate}
\end{proof}

From the compactness of the derivatives we know that the linearized problems arising from $S$ would be locally ill-posed at every point, but they might still be well-posed by restricting the problem to $N(\partial_x S(p))^\perp$. 
We show that this is not the case.

\begin{lemma}\label{lemma:abstract:frechet_range}
  Assume everything as in \Cref{lemma:abstract:frechet_compact} and additionally that $f\neq 0$. 
  In this setting the range of the following operators is infinite-dimensional for every $p\in D(S) \cap X^{(k)}$:
  \begin{enumerate}[(i)]
    \item $\partial_Q S(p) \in \mathcal L(W^{k_1,\infty}(I; \mathcal L(V, H)), Y^{(k)})$,
    \item $\partial_A S(p) \in \mathcal L(W^{k+1,\infty}(I; \Lsa(V, V^*)), Y^{(k-1)})$ if $k\geq 2$,
    \item $\partial_B S(p) \in \mathcal L(W^{k,\infty}(I; \mathcal L(H)), Y^{(k)})$ if $k\geq 1$ and
    \item $\partial_C S(p) \in \mathcal L(W^{k+1,\infty}(I; \Lsa(H)), Y^{(k-1)})$ if $k\geq 2$.
  \end{enumerate}
\end{lemma}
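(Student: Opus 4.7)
The plan is to reduce the claim, for each symbol $x\in\{A,B,C,Q\}$, to exhibiting a countable family $\{h_j\}_{j\geq 0}$ of directions whose images under the bilinear form $g_x(u)[\cdot]$ (with $u=S(p)$) are linearly independent in the target space of that $g_x$. This reduction works because the linear map $g \mapsto u_h$ that sends a right-hand side to the unique solution of
\begin{equation*}
  (\mathcal C u_h')' + \mathcal B u_h' + (\mathcal A + \mathcal Q) u_h = g
\end{equation*}
with homogeneous initial data is injective: plugging a vanishing combination $\sum_j c_j u_{h_j}=0$ back into the linear equation and exploiting uniqueness from \Cref{lemma:abstract:wellposed} forces $\sum_j c_j g_x(u)[h_j]=0$. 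Hence linear independence of the right-hand sides propagates to linear independence of the corresponding solutions $\partial_x S(p)[h_j]$.

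Before choosing the $h_j$, I would verify that $u\not\equiv 0$, and, whenever $u_0=u_1=0$ is in force (cases (ii)--(iv)), also that $u'\not\equiv 0$ on a set of positive measure. Both follow from a single observation: if $u\equiv 0$, then the forward equation forces $f\equiv 0$, contradicting the hypothesis $f\neq 0$; and if $u'\equiv 0$ while $u(0)=0$, then $u\equiv 0$, back to the previous case. These are exactly the nondegeneracy facts needed for the polynomial argument below.

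The explicit construction is uniform across the four cases: I would set $h_j(t)=t^j\, J_x$ with $j\geq 0$, where $J_A\colon V\to V^*$ is the Riesz isomorphism (self-adjoint by symmetry of the $V$-inner product), $J_C=I_H\in\Lsa(H)$, $J_B=I_H\in\mathcal L(H)$, and $J_Q$ is the canonical embedding $V\hookrightarrow H$. Each $h_j$ is smooth in $t$ and lies in the Banach space required by the respective statement. A direct computation yields
\begin{equation*}
  g_Q(u)[h_j]=-t^j u,\ g_A(u)[h_j]=-t^j J_A u,\ g_B(u)[h_j]=-t^j u',\ g_C(u)[h_j]=-(t^j u')'.
\end{equation*}
Now suppose a finite combination $\sum_j c_j g_x(u)[h_j]$ vanishes and set $P(t)=\sum_j c_j t^j$. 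For $x\in\{A,B,Q\}$ the identity reduces to $P(t)\,v(t)=0$ almost everywhere, with $v\in\{J_A u,u,u'\}$; since $v$ is nonzero on a set of positive measure and $J_A$ is injective, $P$ must vanish on that set and is therefore the zero polynomial, i.e.\ all $c_j$ vanish. For $x=C$ one instead obtains $(P u')'=0$, so $Pu'$ is constant in $t$ as an element of $C(I;H)$ (this uses $k\geq 2$, which makes $u'$ continuous in $H$); evaluating at $t=0$ and invoking $u'(0)=u_1=0$ identifies this constant as zero, and the polynomial argument then closes as before.

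The one delicate point I expect is the $C$-case, where the time derivative in $g_C$ means that pure scalar multiplication is not sufficient and one genuinely needs the initial datum $u'(0)=0$, together with continuity of $u'$, to eliminate the constant of integration. This is precisely the reason the hypothesis $k\geq 2$ is essential there (and analogously for $A$). Everything else is bookkeeping: checking that the prescribed $J_x$ are self-adjoint where required, that each $h_j$ has the regularity demanded by the Banach space in which $\partial_x S(p)$ is taken, and that the pointwise-a.e.\ argument transfers correctly to the Bochner-Sobolev norms in which linear independence is verified.
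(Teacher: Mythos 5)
Your proof is correct, and it rests on the same reduction as the paper's: by linearity and uniqueness for the linearized equation, the solution map $g\mapsto u_g$ is injective, so it suffices to exhibit an infinite linearly independent family among the right-hand sides $g_x(u)[h_j]$; you also identify the same nondegeneracy inputs ($u\neq 0$ from $f\neq 0$, and $u'\neq 0$ from $u_0=0$ for the $B$- and $C$-cases). Where you genuinely diverge is in the choice of witnesses. The paper takes $h_i(t)=\alpha_i(t)\,\mathrm{Id}$ with smooth bumps $\alpha_i$ having pairwise disjoint supports inside an interval where $u$ (resp.\ $u'$, resp.\ $(\alpha_i u')'$) does not vanish; linear independence of $\{-\alpha_i u\}$ is then immediate from disjointness of supports, at the price of first locating such an interval via continuity of $u$ in $H$. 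You instead take $h_j(t)=t^j J_x$ and run a polynomial-vanishing argument: $P(t)v(t)=0$ a.e.\ with $v\neq 0$ on a set of positive measure forces $P\equiv 0$. This needs no localization and no continuity of $u$ (positive measure of $\{u\neq 0\}$ suffices), but it makes the $C$-case slightly more delicate: you must integrate $(Pu')'=0$ and use $u'(0)=0$ (which follows from $u_1=(\mathcal Cu')(0)=0$ and injectivity of the coercive $C(0)$, a small step worth spelling out) to kill the constant, whereas the paper simply arranges $(\alpha_i u')'\neq 0$ directly. Both routes are sound; yours is arguably cleaner in cases (i)--(iii) and marginally heavier in case (iv).
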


\begin{proof}\hfill
\begin{description}
  \item[\normalfont(i)\, \& \,(ii):] Assume that one of the operators had a finite dimensional range, i.e.\ that $u_h=\partial_x S(p)[h]$ (with $x=A$ or $x=Q$) can be represented as a finite sum independent of $h\in W^{k,\infty}(I; \mathcal L(V, H))$ and $h\in W^{k+1,\infty}(I; \Lsa(V, V^*))$, respectively. 
  Due to the linearity of the equation solved by $u_h$ its left- and therefore also its right-hand side $-h[u]$ could be written as a finite sum as well. \\
  Since $f\neq 0$ we also have $u=S(p)\neq 0$ and even for $k=0$ know $u\in C(I; H)$. 
  Therefore there exists $t_0\in (0,T)$ and $\epsilon > 0$ such that $t_0+(-\epsilon, \epsilon)\subset (0,T)$ and $u(t_0+s)\neq 0$ for all $s\in (-\epsilon,\epsilon)$.
  Given any sequence of pointwise disjoint balls $B(t_i, \epsilon_i)\subset t_0+(-\epsilon, \epsilon)$ and functions $(\alpha_i)_{i\in\N}\subset C^\infty(\R)$ with $\emptyset \neq \spt \alpha_i \subset B(t_i, \epsilon_i)$ we define $h_i(t) = \alpha_i(t) \mathrm{Id}_H$.
  This way we get $h\in C^{\infty}(I; \mathcal L(H)) \subset C^{\infty}(I; \Lsa(V, V^*))$. 
  The supports of $-h_i[u]$ are non-empty and pairwise disjoint. Hence, the set $\{-h_i[u]\}_{i\in\N}$ is infinite and linear independent, which contradicts the assumption.
  \item[\normalfont(iii)\, \& \,(iv):] Here $h$ is applied to $u'$. 
  For $u'\in C(I; H)$ we have to require regularity with $k\geq 1$. 
  Due to $u_0=0$ and $u\neq 0$ we conclude $u'\neq 0$ and can proceed as in the first part of the proof and obtain $\dim(\mathcal R(\partial_B S(p)))=\infty$.
  When looking at $\partial_C S$ we additionally have to choose $\alpha_i$ in such a way that $(\alpha_i u')' = \alpha_i' u' + \alpha_i u'' \neq 0$, but this is no problem when $k\geq 2$. \qedhere
\end{description}
\end{proof}

\section{Application to Linear Elasticity}%
\label{section:elasticity}

As a first example we consider the propagation of elastic waves through a bounded domain $\Omega \subset \R^3$ in the finite time interval $I=[0,T]$ with $T>0$.
Our model for the displacement field $u : I\times \Omega \to \R^3$ is given through the equation
\begin{equation}\label{eq:elasticity:pde}
  (\rho u')' = \divv \sigma(u) + f\quad \text{in $I\times \Omega$}.
\end{equation}
The right-hand side consists of the restoring force, which is equal to the row-wise divergence of the stress tensor $\sigma(u):I\times \Omega \to \R^{3\times 3}$ due to Hooke's law. 
We also allow for a volumetric force $f : I\times \Omega \to \R$.
The function $\rho$ denotes the mass density inside $\Omega$.

We assume that $\Omega$ consists of a linear isotropic material such that the stress tensor has the form $\sigma(u)=\sigma_{\lambda, \mu}(u)=2\mu \epsilon(u)+ \lambda \divv(u) I_3$. 
The functions $\lambda$ and $\mu$ denote the Lamé coefficients of the material and $I_3$ is the $3\times 3$ unit matrix. 
The symmetric strain tensor $\epsilon(u)=(Du + Du^\top ) / 2$ depends on the Jacobian $Du$ of $u$.

For simplicity we make the assumption that the material is at rest at $t=0$, i.e. $u(0)=u'(0)=0$, and that the body is fixed throughout the whole time. 
This is modeled by the homogeneous Dirichlet boundary condition $u=0$ on $S\times \partial \Omega$. 
This setup implies that the excitation of waves inside $\Omega$ happens only due to the volumetric force $f$.

The inverse problem we would like to consider is the identification of the density $\rho$ and the Lamé coefficients $\lambda$, $\mu$ from measurements of the displacement field $u$. 
This setting is relevant e.g.\ for non-destructive testing, where a deviation in these values might indicate a defect in the material.

We start by stating the elastic equation in the required abstract setting. 
As stated above, we consider the initial boundary value problem 
\begin{subequations}\label{eq:elasticity:problem_pde}
\begin{align}
  (\rho u')' - \divv \sigma(u) &= f\quad \text{in $S\times \Omega$} \\
  u(0) = u'(0) &= 0 \ \text{ in $\Omega$} \\
  u &= 0 \ \text{ in $S\times\partial\Omega$}.
\end{align}
\end{subequations}
\newcommand{\spaceH}{\ensuremath{L^2(\Omega, \R^3)}}
\newcommand{\spaceV}{\ensuremath{H^1_0(\Omega, \R^3)}}
\newcommand{\spaceVdual}{\ensuremath{H^{-1}(\Omega, \R^3)}}
Due to the boundary conditions the appropriate function spaces for the weak formulation are given through $H=\spaceH$, $V=\spaceV$ and therefore we have $V^*=\spaceVdual$. 
A formal integration by parts shows that the weak formulation of the PDE then reads as 
\begin{equation}
  \int_\Omega (\rho(t)u'(t))' v \dx + \int_\Omega \sigma_{\lambda(t), \mu(t)}(u(t)):\epsilon(v) \dx = \int_\Omega f(t)v\dx,
\end{equation}
which should hold for all $v\in \spaceV$ for almost all $t\in I$. 
The expression $A:B$ refers to the scalar product of the matrices $A,B\in \R^{3\times 3}$, i.e. $A:B = \sum_{i,j} A_{ij} B_{ij}$.
To write this in an abstract setting we define $A=A_{\lambda, \mu} \in L^\infty(I; \Lsa(\spaceV; \spaceVdual))$ and $C=C_\rho\in L^\infty(I; \Lsa(\spaceH))$ through 
\begin{align*}
  \dup{A(t)v}\phi &= \int_\Omega \sigma_{\lambda(t), \mu(t)}(v): \epsilon(\phi) \dx = \int_\Omega 2\mu(t) \epsilon(v):\epsilon(\phi) + \lambda(t) \divv(v)\divv(\phi) \dx, \\
  C(t)u &= \rho(t) u 
\end{align*}
for $u\in \spaceH$ and $v,\phi\in \spaceV$. 
Therefore we are interested in finding a function $u\in L^2(I; \spaceV) \cap H^1(I; \spaceH)$ with $\mathcal Cu' \in H^1(I; \spaceVdual)$ which solves
\begin{subequations}\label{eq:elasticity:problem_abstract}
\begin{gather}
  (\mathcal Cu')'+ \mathcal A u = f \text{ in } L^2(I; \spaceVdual),\\
    u(0)=0\text{ in } \spaceH,\ (\mathcal C u')(0)=0\text{ in } \spaceVdual.
\end{gather}
\end{subequations}
This problem fits into the abstract theory of the previous sections, which yields results for the operator $S: (A, C)\mapsto u$. 
In particular, we conclude that $\map S{D(S)\subset X}Y$, where 
\begin{align*}
  X &= W^{1,\infty}(I; \Lsa(\spaceV;\spaceVdual)) \times W^{1,\infty}(I; \Lsa(\spaceH)), \\
  D(S) &= \left \{(A, C)\in X \ \Big| \ A\in L^\infty(I; \Lsa_{a_0+\epsilon}(\spaceV;\spaceVdual)) \text{ and } \right. \\
  &\qquad\qquad\qquad\quad\ \ \left. C\in  L^\infty(I; \Lsa_{c_0+\epsilon}(\spaceH)) \text{ for some $\epsilon > 0$}\right \},\\
  Y &= L^\infty(I; \spaceV)\cap W^{1,\infty}(I; \spaceH),
\end{align*}
is well-defined for $f\in L^2(I; \spaceH)$ or $f\in H^1(I; \spaceVdual)$. 
For a smooth $f \in \mathcal F^{(k)}$, we can also regard $S$ as a mapping $\map S{D(S)\cap X^{(k)}\subset X^{(k)}}{Y^{(k)}}$ with
\begin{align*}
  X^{(k)} &= W^{k+1,\infty}(I; \Lsa(\spaceV;\spaceVdual)) \times W^{k+1,\infty}(I; \Lsa(\spaceH))\\
  Y^{(k)} &= W^{k,\infty}(I; \spaceV)\cap W^{k+1,\infty}(I; \spaceH).
\end{align*}

We compose $S$ with the operator
\begin{equation*}
  P (\lambda, \mu, \rho) = (A_{\lambda, \mu}, C_\rho)
\end{equation*}
to get the forward operator $F=S\circ P$ of our problem. 
It is well-defined for those $\lambda, \mu, \rho$ that are mapped onto $D(S)\cap X$ by $P$. 
The following lemma characterizes this set of functions.

\begin{lemma}\label{lemma:elasticity:continuous_p}
  Let $k\geq 0$ and $\lambda, \mu, \rho \in W^{k+1,\infty}(I; L^\infty(\Omega))$. 
  Then we have $(A_{\lambda, \mu}, C_\rho)\in X^{(k)}$ with
  \begin{alignat*}{3}
    &\norm{A_{\lambda,\rho}}_{W^{k+1,\infty}(I; \mathcal L(\spaceV,\spaceVdual))} &&\leq 2 \norm{\mu}_{W^{k+1,\infty}(I; L^\infty(\Omega))}+\norm{\lambda}_{W^{k+1,\infty}(I; L^\infty(\Omega))} \\
    &\norm{C_\rho}_{W^{k+1,\infty}(I; \mathcal L(\spaceH))} &&\leq \norm{\rho}_{W^{k+1,\infty}(I; L^\infty(\Omega))}
  \end{alignat*}
  If, in addition, $\rho(t,x)\geq \rho_0$, $\mu(t,x) \leq \alpha_0$ and $\alpha_0^{-1} \leq 2\mu(t,x)+3\lambda(t,x)\leq \alpha_0$ for some $\rho_0, \alpha_0 > 0$, then 
  \begin{equation*}
    \dup{A(t)\phi}{\phi} \geq \alpha_0 \norm{\epsilon(\phi)}^2_{L^2(\Omega, \R^{3\times 3})}\ \ \text{and}\ \ \scp{C(t)\psi}{\psi}\geq \rho_0 \norm{\psi}^2_{L^2(\Omega, \R^3)}
  \end{equation*}
  for all $\phi\in \spaceV$, $\psi\in \spaceH$ and almost all $t\in I$.
\end{lemma}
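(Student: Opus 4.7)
The plan is to exploit that $P$ is linear and acts pointwise in time, so that every time derivative of $A_{\lambda,\mu}$ (resp.\ $C_\rho$) has the same structural form as the original operator with $\lambda,\mu$ (resp.\ $\rho$) replaced by their own time derivatives. More precisely, differentiating under the integral one checks that $(A_{\lambda,\mu})^{(j)}(t)=A_{\lambda^{(j)}(t),\mu^{(j)}(t)}$ in $\mathcal L(\spaceV,\spaceVdual)$ and $(C_\rho)^{(j)}(t)=C_{\rho^{(j)}(t)}$ in $\mathcal L(\spaceH)$ for every $0\le j\le k+1$. This reduces the $W^{k+1,\infty}(I;\mathcal L(\cdot,\cdot))$-norms to pointwise-in-$t$ operator-norm estimates carried out separately for each derivative order.

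For the pointwise bound on $A_{\lambda,\mu}(t)$ I would test against arbitrary $v,\phi\in\spaceV$, apply Cauchy--Schwarz in $\Omega$, and invoke the elementary estimates $\norm{\epsilon(v)}_{L^2(\Omega,\R^{3\times 3})}\le\norm{v}_V$ and $\norm{\divv v}_{L^2(\Omega)}\le\norm{v}_V$ that follow from identifying the $\spaceV$-norm with $\norm{\nabla v}_{L^2}$. This bounds the bilinear form by $(2\norm{\mu(t)}_{L^\infty}+\norm{\lambda(t)}_{L^\infty})\norm{v}_V\norm{\phi}_V$, and taking the essential supremum in $t$ for each derivative order yields the first claim. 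The corresponding bound for $C_\rho(t)$ is immediate from $|\scp{\rho(t)\psi}{\psi}|\le\norm{\rho(t)}_{L^\infty}\norm{\psi}_H^2$.

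For the coercivity claims, the $C$-part is trivial, since $\scp{C(t)\psi}{\psi}=\int_\Omega\rho(t)|\psi|^2\dx\ge\rho_0\norm{\psi}_H^2$. The key step for $A_{\lambda,\mu}$ is the orthogonal decomposition of the strain tensor into its deviatoric and isotropic parts, $\epsilon(\phi)=\epsilon^D(\phi)+\tfrac13(\divv\phi)I_3$, which gives the pointwise identities $|\epsilon(\phi)|^2=|\epsilon^D(\phi)|^2+\tfrac13(\divv\phi)^2$ and $2\mu|\epsilon(\phi)|^2+\lambda(\divv\phi)^2 = 2\mu|\epsilon^D(\phi)|^2+\tfrac{2\mu+3\lambda}{3}(\divv\phi)^2$. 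The two hypothesized bounds (on $\mu$ and on $2\mu+3\lambda$) then control the deviatoric and isotropic summands separately, producing a pointwise lower bound of the form $\alpha_0|\epsilon(\phi)|^2$ after matching constants; integration over $\Omega$ finishes the argument. I do not anticipate any serious obstacle; the only item requiring care is the bookkeeping of multiplicative constants in the deviatoric/isotropic decomposition so that the coercivity constant comes out as claimed.
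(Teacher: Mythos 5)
Your overall route is the right one, and it is essentially the argument the paper leans on: the paper's own proof consists of one sentence declaring the norm estimates and the coercivity of $C(t)$ straightforward and citing Kirsch--Rieder for the coercivity of $A(t)$, and the deviatoric/isotropic splitting $\epsilon(\phi)=\epsilon^D(\phi)+\tfrac13(\divv\phi)I_3$ with $2\mu|\epsilon|^2+\lambda(\divv\phi)^2=2\mu|\epsilon^D|^2+\tfrac{2\mu+3\lambda}{3}(\divv\phi)^2$ is precisely the standard computation behind that citation. The reduction $(A_{\lambda,\mu})^{(j)}=A_{\lambda^{(j)},\mu^{(j)}}$, $(C_\rho)^{(j)}=C_{\rho^{(j)}}$ by linearity and the Cauchy--Schwarz bound are fine. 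One small point on the norm estimate: $\norm{\divv v}_{L^2}\le\norm{\nabla v}_{L^2}$ is not a pointwise inequality (pointwise you only get $|\divv v|\le\sqrt3\,|Dv|$); to get the constant $1$, and hence the stated bound $2\norm{\mu}+\norm{\lambda}$, you need the integration-by-parts identity $\int_\Omega|\divv v|^2\dx\le\int_\Omega|Dv|^2\dx$ valid on $H^1_0(\Omega,\R^3)$, or else the full $H^1$-norm on $V$.

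The genuine problem is the step you deferred as ``bookkeeping of multiplicative constants'': it does not close under the hypotheses as written. Your decomposition requires a \emph{lower} bound on $\mu$ to control the deviatoric summand $2\mu|\epsilon^D(\phi)|^2$ from below, but the lemma only assumes the upper bound $\mu(t,x)\le\alpha_0$ (indeed no lower bound on $\mu$ at all). Taking a divergence-free $\phi$ with $\epsilon(\phi)\neq 0$, one has $\dup{A(t)\phi}{\phi}=\int_\Omega 2\mu(t)\,|\epsilon(\phi)|^2\dx$, which can be made arbitrarily small (or nonpositive) while respecting $\mu\le\alpha_0$ and $\alpha_0^{-1}\le2\mu+3\lambda\le\alpha_0$; so the asserted coercivity $\dup{A(t)\phi}{\phi}\ge\alpha_0\norm{\epsilon(\phi)}^2$ cannot be derived this way and is in fact false as literally stated. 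This is almost certainly a typo in the lemma (the reference cited by the paper assumes lower bounds, e.g.\ $2\mu\ge\alpha_0$ and $2\mu+3\lambda\ge\alpha_0$, under which your identity immediately gives $2\mu|\epsilon^D|^2+\tfrac{2\mu+3\lambda}{3}(\divv\phi)^2\ge\alpha_0(|\epsilon^D|^2+\tfrac13(\divv\phi)^2)=\alpha_0|\epsilon(\phi)|^2$), but a complete proof must either use the corrected hypotheses or flag the inconsistency; asserting that the constants ``come out as claimed'' is the one step in your argument that fails.
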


\begin{proof}
  The norm estimates are straightforward, as is the coercivity of $C(t)$. 
  Regarding the coercivity of $A(t)$ see, e.g.,~\cite{kirsch_inverse_2016}.
\end{proof}

Let $\rho_0>0$ and $\alpha_0>0$ be fixed in the sequel, and let $C_K$ and $C_P$ denote the constants from the Korn- and Poincaré inequality for $\Omega$, respectively, i.e. 
$\norm{\epsilon(\phi)}^2_{L^2(\Omega, \R^3)} \geq C_K \norm{\nabla \phi}^2_{L^2(\Omega, \R^3)} \geq C_K C_P \norm{\phi}^2_{\spaceV}$ holds for all $\phi\in V$. 
According to the above lemma, $P$ given as
\begin{equation*}
  P:D(P)\cap W^{(k)}\subset W^{(k)}\to D(S)\cap X^{(k)}\subset X^{(k)}
\end{equation*}
is well-defined for $k\geq 0$ if we set the constants that appear in the definition of $D(S)$ to be 
$C_0 = \rho_0$ and $A_0 = \alpha_0 C_K C_P$ and define the spaces
\begin{align*}
  W^{(k)} &= \left(W^{k+1,\infty}(I; L^\infty(\Omega))\right)^3,  \\
  D(P) &= \Big \{(\lambda, \mu, \rho) \in W^{(0)} \,|\, \rho \geq \rho_0+\epsilon, \ \mu \leq \alpha_0 - \epsilon \text{ and }  \\
       &\ \qquad \alpha_0^{-1}+\epsilon \leq 2\mu(t,x)+3\lambda(t,x)\leq \alpha_0-\epsilon \text{ a.e.\ in $S\times \Omega$ for some $\epsilon>0$}\Big \}.
\end{align*}
The forward operator can therefore be considered as the mapping 
\begin{equation*}
  \map{F\assign S\circ P}{D(P)\cap W^{(k)}}{Y^{(k)}}
\end{equation*}
for arbitrary $k\geq0$. 
We note that $D(P)\cap W^{(k)}$ is an open subset of the Banach space $W^{(k)}$ (cf. \Cref{lemma:abstract:lsa_interior}), so analyzing the Fréchet-differentiability of $F$ (and $P$) makes sense. 

\subsection{Properties of the Forward Operator}

We already know about the differentiability of $S$, so we only need to discuss derivatives of $P$. 
In this setting $P$ is linear and continuous, so we can directly calculate $\partial F$ using the chain rule and \Cref{corollary:abstract:frechet_total}. 

\begin{theorem}\label{theorem:elasticity:frechet_f}
  Let $k\geq 2$ and $f\in \mathcal F^{(k)}$. 
  Then $F: D(P) \cap W^{(k)} \to Y^{(k-2)}$ is Fréchet-differentiable.
  For all $x=(\lambda, \mu, \rho)\in D(P)\cap W^{(k)}$ and $h=(\bar\lambda, \bar\mu, \bar\rho)\in W^{(k)}$, $\partial F(x)[h]$ is given as the unique weak solution $u_h$ of the equation
  \begin{align*}
      (\rho u_h')'(t) - \divv \left(2\mu(t) \epsilon(u_h(t))+ \lambda \divv u_h(t) I_3 \right) &= -A_{\bar\lambda, \bar\mu}(t) u(t) - (\mathcal C_{\bar\rho}u')'(t) \\
      &= \divv \left(2\bar\mu(t) \epsilon(u(t))+ \bar\lambda \divv u(t) I_3 \right) - (\bar\rho u')'(t)
  \end{align*}
  that also satisfies homogeneous initial values $u_h(0) = (\rho u_h')(0) = 0$. 
  As always, $u=F(x)$ denotes the solution of the forward problem.
\end{theorem}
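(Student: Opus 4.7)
The plan is to reduce this to the chain rule, since $F = S\circ P$ and both factors have already been analyzed. First, I would observe that $P$ is linear in $(\lambda,\mu,\rho)$ (the expression $A_{\lambda,\mu}$ depends linearly on $\lambda$ and $\mu$ via the bilinear form, and $C_\rho$ is linear in $\rho$). By \Cref{lemma:elasticity:continuous_p} the norm bounds give continuity of $P\colon W^{(k)}\to X^{(k)}$ for every $k\geq 0$, so $P$ is trivially Fréchet-differentiable with $\partial P(x) = P$.

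Second, I would verify that $P$ actually sends $D(P)\cap W^{(k)}$ into $D(S)\cap X^{(k)}$. For $(\lambda,\mu,\rho)\in D(P)$ with slack parameter $\epsilon>0$ the coercivity part of \Cref{lemma:elasticity:continuous_p} combined with the Korn and Poincaré inequalities yields $\dup{A_{\lambda,\mu}(t)\phi}{\phi}\geq (\alpha_0^{-1}+\epsilon) C_K C_P \norm{\phi}_V^2$ up to the chosen thresholds $A_0 = \alpha_0 C_K C_P$, and similarly $\scp{C_\rho(t)\psi}{\psi}\geq (\rho_0+\epsilon)\norm{\psi}_H^2$, so $P(x)$ lies in the interior condition prescribed in $D(S)$. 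This is the one spot where I would check the constants carefully, but the matching has essentially been set up in the paragraph introducing $W^{(k)}$ and $D(P)$, so it is just bookkeeping.

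Third, I would invoke \Cref{corollary:abstract:frechet_total}, which applies because $k\geq 2$, $u_0=u_1=0$, and $f\in \mathcal F^{(k)}$. It gives Fréchet-differentiability of $S\colon D(S)\cap X^{(k)}\to Y^{(k-2)}$, so the chain rule yields Fréchet-differentiability of $F$ with
\begin{equation*}
  \partial F(x)[h] = \partial S(P(x))\bigl[P(h)\bigr].
\end{equation*}
Specializing the formula in \Cref{corollary:abstract:frechet_total} to the perturbation $(\bar A,\bar B,\bar C,\bar Q)=(A_{\bar\lambda,\bar\mu},0,C_{\bar\rho},0)$ identifies $u_h\assign \partial F(x)[h]$ as the unique function in $Y^{(k-2)}$ with vanishing initial data satisfying
\begin{equation*}
  (\mathcal C_\rho u_h')' + \mathcal A_{\lambda,\mu} u_h = -A_{\bar\lambda,\bar\mu}(\cdot)u(\cdot) - \bigl(C_{\bar\rho}(\cdot)u'(\cdot)\bigr)'
\end{equation*}
in $L^2(I;V^*)$.

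Fourth, I would translate this back into the PDE form stated in the theorem. The identity $C_{\bar\rho}(t)v=\bar\rho(t)v$ is immediate, giving $(\bar\rho u')'$ for the last term. For the $A$-term, the defining formula $\dup{A_{\bar\lambda,\bar\mu}(t)u(t)}{\phi}=\int_\Omega \sigma_{\bar\lambda(t),\bar\mu(t)}(u(t)):\epsilon(\phi)\dx$ identifies $-A_{\bar\lambda,\bar\mu}(t)u(t)$ with the distributional row-wise divergence $\divv\,\sigma_{\bar\lambda(t),\bar\mu(t)}(u(t))\in V^*$; rewriting the left-hand side analogously recovers the stated elastic equation. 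The main obstacle is not analytical but notational — keeping track of the weak-strong translation between the abstract operator form and the divergence form — so beyond the constants check in step two the argument is routine.
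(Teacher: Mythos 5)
Your proposal is correct and follows exactly the route the paper takes: the paper's entire justification is the one-line remark preceding the theorem that $P$ is linear and continuous, so $\partial F$ follows from the chain rule together with \Cref{corollary:abstract:frechet_total}. Your version merely spells out the domain/coercivity bookkeeping and the translation back to divergence form, which the paper leaves implicit.
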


We continue with the adjoint of $\partial F(x) \in \mathcal L(W^{(k)}, L^2(I; \spaceH))$. 
We already know about $\partial S(P(x))^*$, and due to the chain rule have
\begin{equation*}
  \partial F(x)^* =  P^* \circ \partial S(P(x))^*.
\end{equation*}
This formula suggests that we should also analyze $P^*$ independently from $S$. 
However, even with the simple structure of $C(t)$ a characterization of $P^*\in \mathcal L((X^{(k)})^*, (W^{(k)})^*)$ is not possible because of insufficent knowledge about the dual space of $X^{(k)}$, i.e.\ how a general $v\in (X^{(k)})^*$ could act on $P(h)$. 
Fortunately we do not need to evaluate $P^*(z)$ for arbitrary $z$, but only for $z\in \mathcal R(\partial S(P(x))^*)$. 
From \Cref{theorem:abstract:adjoints} we know that these $z\in (X^{(k)})^*$ evaluate its argument at a point (depending on $x$) and form a kind of $L^2(I; \spaceH)$ inner product with the result.

Since the abstract formulation of our elastic wave equation has $B=Q=0$, the adjoint equation~\eqref{eq:abstract:adjoint_equation} in this case is the original equation that has to be solved backwards in time. 

\begin{theorem}
  Let $k\geq 2$, $f\in \mathcal F^{(k)}$ and $x\in D(P)\cap W^{(k)}$.
  The application of the adjoint of $\partial F(x)\in \mathcal L(W^{(k)}, L^2(I; \spaceH))$ on $v\in L^2(I; \spaceH)$ can be written as 
  \begin{equation*}
    \partial F(x)^*[v] = \begin{pmatrix*}[l]
      -\divv u\divv w_v \\
      -2\epsilon(u):\epsilon(w_v) \\
      \displaystyle u'\cdot w_v'  
     \end{pmatrix*} 
     \in L^1(I; L^1(\Omega))^3 \subset \left( L^\infty(I; L^\infty(\Omega))^3 \right)^* \subset \left(W^{(k)}\right)^*,
  \end{equation*}
  where the embedding of $L^1(I; L^1(\Omega))$ into $L^\infty(I; L^\infty(\Omega))^*$ has to be understood using the inner product of $L^2(I; L^2(\Omega))$, $u=F(x)\in Y^{(2)}$ and $w_v\in Y$ denotes the solution of
  \begin{align*}
    (\rho w_v')'(t) - \divv \left(2\mu(t) \epsilon(w_v(t))+ \lambda \divv w_v(t) I_3 \right) = v(t) \text{\ in $\spaceVdual$}
  \end{align*}
  for almost all $t\in I$ together with homogeneous end conditions $(\rho w_v')(T)=w_v(T)=0$. 
\end{theorem}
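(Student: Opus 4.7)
The plan is to reduce the statement to the abstract adjoint formula in \Cref{corollary:abstract:adjoint_total} via the chain rule. Since $P$ is linear and continuous, $\partial F(x) = \partial S(P(x)) \circ P$ and consequently
\begin{equation*}
  \partial F(x)^* = P^* \circ \partial S(P(x))^* \in \mathcal L(L^2(I; \spaceH), (W^{(k)})^*).
\end{equation*}
The first step is therefore to apply \Cref{corollary:abstract:adjoint_total} (with $B=Q=0$) to the pair $P(x) = (A_{\lambda,\mu}, C_\rho) \in D(S)\cap X^{(k)}$, which immediately yields, for any $h = (\bar\lambda, \bar\mu, \bar\rho)\in W^{(k)}$,
\begin{equation*}
  \dup{\partial F(x)^*[v]}{h} = \int_0^T \scp{C_{\bar\rho}(t) u'(t)}{w_v'(t)} - \dup{A_{\bar\lambda,\bar\mu}(t)\, u(t)}{w_v(t)} \dt,
\end{equation*}
where $w_v$ is the solution of the adjoint equation, which in our case (with $B=Q=0$, $A$ pointwise self-adjoint) is exactly the original equation run backwards in time, so $w_v$ coincides with the function defined in the statement.

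Next, I would unfold the definitions of $A_{\bar\lambda,\bar\mu}(t)$ and $C_{\bar\rho}(t)$. The multiplication operator $C_{\bar\rho}(t)$ gives
\begin{equation*}
  \scp{C_{\bar\rho}(t)u'(t)}{w_v'(t)}_H = \int_\Omega \bar\rho(t,x)\, u'(t,x)\cdot w_v'(t,x) \dx,
\end{equation*}
while the bilinear form defining $A_{\bar\lambda,\bar\mu}(t)$ yields
\begin{equation*}
  \dup{A_{\bar\lambda,\bar\mu}(t) u(t)}{w_v(t)} = \int_\Omega 2\bar\mu(t)\,\epsilon(u(t)){:}\epsilon(w_v(t)) + \bar\lambda(t)\,\divv u(t)\,\divv w_v(t) \dx.
\end{equation*}
Substituting these into the previous identity and grouping by $\bar\lambda$, $\bar\mu$, $\bar\rho$ produces exactly the $L^2(I;L^2(\Omega))$-pairing of $h$ with the vector announced in the statement.

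It then remains to verify that this vector has the claimed regularity $L^1(I;L^1(\Omega))^3$. Since $x\in D(P)\cap W^{(k)}$ with $k\geq 2$, \Cref{theorem:abstract:regularity} guarantees $u = F(x)\in Y^{(2)}$ and $w_v \in Y$, so in particular $u, w_v \in L^2(I; \spaceV)$ and $u', w_v'\in L^2(I; \spaceH)$. The Cauchy--Schwarz inequality in space and time then yields $\divv u\,\divv w_v,\, \epsilon(u){:}\epsilon(w_v),\, u'\cdot w_v' \in L^1(I; L^1(\Omega))$. The embedding $L^1(I;L^1(\Omega))\hookrightarrow L^\infty(I;L^\infty(\Omega))^*$ via the $L^2(I;L^2(\Omega))$ duality pairing is classical, and composing it with the continuous embedding $W^{(k)}\hookrightarrow L^\infty(I;L^\infty(\Omega))^3$ gives the required inclusion into $(W^{(k)})^*$.

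The only subtle point I anticipate is conceptual rather than technical: \emph{a priori} $\partial F(x)^*[v]$ only lives in the very large dual $(W^{(k)})^*$, and it is not automatic that it is representable by a concrete function. The argument above circumvents this by never entering $(X^{(k)})^*$ with an unknown element; instead, we exploit the explicit formula from \Cref{corollary:abstract:adjoint_total} before pulling back through $P$, so that the action of $P^*$ on the range of $\partial S(P(x))^*$ can be computed pointwise in time and space. Everything else is a routine unfolding of the bilinear forms attached to the elastic operator.
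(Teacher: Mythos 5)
Your proposal is correct and follows essentially the same route as the paper: reduce to \Cref{corollary:abstract:adjoint_total} via $\dup{\partial F(x)^*[v]}{h} = \dup{\partial S(P(x))^*[v]}{P(h)}$, unfold the bilinear forms of $A_{\bar\lambda,\bar\mu}$ and $C_{\bar\rho}$, and reinterpret the resulting integrands as $L^1$--$L^\infty$ dual pairings. Your explicit Cauchy--Schwarz justification of the $L^1(I;L^1(\Omega))$ regularity is a detail the paper merely asserts, but the argument is the same.
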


\begin{proof}
  Let $h=(\bar\lambda, \bar\mu, \bar\rho)\in W^{(k)}$. 
  We use the characterization of $\partial S(P(x))^*$ from \Cref{corollary:abstract:adjoint_total} to see
  \begin{align*}
    \dup{\partial F(x)^*[v]}h_{(W^{(k)})^*\times W^{(k)}} &= \dup{\partial S(P(x))^*[v]}{P(h)}_{(X^{(k)})^*\times X^{(k)}} \\
    &= \int_0^T \scp{C_{\bar\rho}u'(t)}{w_v'(t)} - \dup{A_{\bar\lambda,\bar\mu}(t)u(t)}{w_v(t)} \dt  \\
    &= \int_0^T \scp{\bar\rho(t)u'(t)}{w_v'(t)} - \scp{\bar\lambda(t) \divv u(t)}{\divv w_v(t)}  \\
    &\qquad - \int_\Omega 2 \bar\mu(t) \epsilon(u(t)):\epsilon(w_v(t)) \dx \dt.
  \end{align*}
  Since e.g. $\bar\rho(t)\in L^\infty(\Omega)$ and $u'(t)\cdot w_v'(t)\in L^1(\Omega)$ we can also write this in a way that the integrands are dual products of the linearization parameters, i.e. 
  \begin{align*}
    \dup{\partial F(x)^*[v]}h_{(W^{(k)})^*\times W^{(k)}} &= \int_0^T \dup{u'(t)\cdot w_v'(t)}{\bar\rho(t)} - \dup{\divv u(t)\divv w_v(t)}{\bar\lambda(t)} \\
    &\qquad - \dup{2\epsilon(u(t)):\epsilon(w_v(t))}{\bar\mu(t)} \dt.
  \end{align*}
  Here, $\dup{\cdot}{\cdot}$ denotes the dual product between $L^1(\Omega) \subset L^\infty(\Omega)^*$ and $L^\infty(\Omega)$. 
  This can also be done with the time variable, which then proves the assertion.
\end{proof}

\subsection{Ill-posedness}

In \Cref{theorem:abstract:illposed_locally} we showed the ill-posedness of $S$ by constructing suitable sequences of arguments. 
These sequences do not lie in the range of $P$, so we cannot directly use that result to conclude ill-posedness of $F$. 
Instead, we construct sequences of parameters such that their image under $P$ fulfills the assumptions of \Cref{theorem:abstract:illposed_convergence}.

In the abstract setting we used time-independent disturbances $R_j$. 
This time we decide to make them independent of the spatial variables instead.
For this we need the following lemma: 

\begin{lemma}\label{lemma:elasticity:illposed_helper}
  Let $r\in\N\cup \{0\}$. 
  There exists $(\alpha_j)_{j\in\N}\subset C_c^\infty(I)$ which satisfies
  \begin{equation*}
    0 < \gamma \leq \norm{\alpha_j}_{W^{r,\infty}(I)} \leq 1 \quad \text{for all $j\in \N$}
  \end{equation*}
  and $\alpha_j\cdot \phi \to 0$ in $H^m(I)$ when $j\to\infty$ for all fixed $\phi \in H^m(I)$ with $m=0,\dots, r$.
\end{lemma}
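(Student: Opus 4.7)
The plan is to construct the $\alpha_j$ as rescalings of a single fixed bump, with the amplitude balanced against the width so that the $W^{r,\infty}$-norm stays pinned in a compact positive interval while the support shrinks to a point. Fix a nontrivial $\alpha\in C_c^\infty(\R)$ with $\supp\alpha\subset [0,1]$ and let $c \assign \norm{\alpha}_{W^{r,\infty}(\R)}>0$. Pick $t_0\in I$ and a sequence $\delta_j\to 0^+$ with $[t_0, t_0+\delta_j]\subset I$ and $\delta_j\leq 1$, and set
\begin{equation*}
  \alpha_j(t) \assign \frac{\delta_j^r}{c}\,\alpha\!\left(\frac{t-t_0}{\delta_j}\right)\in C_c^\infty(I).
\end{equation*}
The decisive choice is the exponent $r$ in the amplitude factor: it is the unique power that simultaneously prevents every derivative up to order $r$ from blowing up and prevents the top derivative from vanishing as $\delta_j\to 0$.

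First I would verify the two-sided norm bound. By the chain rule $\alpha_j^{(k)}(t) = (\delta_j^{r-k}/c)\,\alpha^{(k)}((t-t_0)/\delta_j)$, so for $0\leq k\leq r$ and $\delta_j\leq 1$ we obtain $\norm{\alpha_j^{(k)}}_{L^\infty(I)}\leq \norm{\alpha^{(k)}}_{L^\infty(\R)}/c\leq 1$, which gives the upper bound $\norm{\alpha_j}_{W^{r,\infty}(I)}\leq 1$. The top-order derivative delivers the lower bound $\norm{\alpha_j^{(r)}}_{L^\infty(I)} = \norm{\alpha^{(r)}}_{L^\infty(\R)}/c$, which is a strictly positive constant $\gamma$ independent of $j$. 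The positivity of $\norm{\alpha^{(r)}}_{L^\infty(\R)}$ is a small rigidity check: if $\alpha^{(r)}\equiv 0$, then $r$-fold integration would force $\alpha$ to be a polynomial of degree less than $r$ on all of $\R$, and such a polynomial with compact support must be identically zero, contradicting the choice of $\alpha$.

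Next I would establish $\alpha_j\phi\to 0$ in $H^m(I)$ for every fixed $\phi\in H^m(I)$ and $0\leq m\leq r$. Expanding $(\alpha_j\phi)^{(m)}$ via the Leibniz rule and using the uniform bound $\norm{\alpha_j^{(k)}}_{L^\infty(I)}\leq 1$ from the previous step, the squared $H^m$-norm of $\alpha_j\phi$ is dominated by a finite sum of integrals of $|\phi^{(\ell)}|^2$ over $\supp\alpha_j$ with $\ell\leq m\leq r$. Since $|\supp\alpha_j|=\delta_j\to 0$ and each $|\phi^{(\ell)}|^2\in L^1(I)$, absolute continuity of the Lebesgue integral forces every integral to zero; as there are only finitely many summands, the whole $H^m$-norm vanishes in the limit.

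I do not expect a serious obstacle in this construction: once the exponent $\delta_j^r$ on the amplitude is identified as the balancing choice, both the norm bound and the $H^m$-convergence fall out by direct computation together with the standard fact that translates of an $L^1$-function have vanishing integrals over sets of shrinking measure. The only step deserving a brief remark is the rigidity argument yielding $\alpha^{(r)}\not\equiv 0$, which secures the lower norm bound $\gamma>0$ uniformly in $r$.
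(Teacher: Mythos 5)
Your construction is essentially identical to the paper's: the paper takes $\alpha_j(t) = j^{-r}\psi(j(t-t_0))$ with $\psi$ normalized so that $\norm{\psi}_{W^{r,\infty}}=1$, which is your rescaled bump with $\delta_j = 1/j$ and the normalization folded into $\psi$ rather than divided out as $c$; the two-sided norm bound and the dominated-convergence/shrinking-support argument for $\alpha_j\phi\to 0$ in $H^m(I)$ match the paper's proof step for step. The proposal is correct.
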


\begin{proof}
  Let $t_0\in (0,T)$ and $\psi\in C_c^\infty(\R)$ such that $\spt \psi = [-1,1]$, $\psi(t)\in [0,1]$ for all $t\in\R$ and $\norm{\psi}_{W^{r,\infty}(I)} = 1$. 
  We define
  \begin{equation*}
    \alpha_j(t) = j^{-r} \psi(j(t-t_0)).
  \end{equation*}
  If $j> \max \{\sfrac 1{t_0}, \sfrac 1{(T-t_0)}\}$ then $\alpha_j\in C_c^\infty(I)$. 
  Hence we might have to drop elements from the beginning of this sequence, but without loss of generality we assume that this is not the case.
  We see that $\spt \alpha_j = t_0 + [-\sfrac 1j , \sfrac 1j]$ and $\alpha_j^{(i)}(t) = j^{i-r} \psi^{(i)}(j(t-t_0))$, so
  \begin{equation*}
    \norm{\alpha_j}_{W^{r,\infty}(I)} = \max_{i=0,\dots, r} j^{i-r} \norm{\psi^{(i)}}_{L^\infty(S)}
    \ \ \begin{cases}
      \leq 1 \\
      \geq \norm{\psi^{(r)}}
    \end{cases}
  \end{equation*}
  holds. 
  For arbitrary $\phi \in H^m(I)$, $m\in \{0, \dots, r\}$ we have 
  \begin{align*}
    \norm{\alpha_j \phi}^2_{H^m(I)} &= \sum_{i=0}^m \sum_{l=0}^i {i \choose l} \int_0^T \left|\alpha_j^{(i-l)}(t) \phi^{(l)}(t) \right|^2 \dt \\
    &\leq \sum_{i=0}^m \sum_{l=0}^i {i \choose l} \int_{t_0-\sfrac 1j}^{t_0+\sfrac 1j} \left|\phi^{(l)}(t) \right|^2 \dt \to 0,
  \end{align*}
  because due to $\phi^{(l)}\in L^2(I)$ for $l=0,\dots, m$ the function $|\phi^{(l)}(t)|^2$ is integrable and dominates the integrand $\chi_{[t_0-\sfrac 1j,t_0+\sfrac 1j]}(t) |\phi^{(l)}(t)|^2$, which converges pointwise to zero. 
\end{proof}

Now we use these functions to construct suitable sequences of parameters.

\begin{theorem}\label{theorem:elasticity:illposed_locally}
  Let $k\in \N$ and $f\in \mathcal F^{(k)}$.
  Then the task of finding $\lambda$, $\mu$ or $\rho$ such that $F(\lambda, \mu, \rho)=y\in Y^{(k-1)}$ is locally ill-posed in every $(\lambda, \mu, \rho)\in D(P) \cap W^{(k)}$.
\end{theorem}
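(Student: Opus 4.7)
The strategy mirrors \Cref{theorem:abstract:illposed_locally}: construct sequences of parameters which stay at a fixed positive distance from $(\lambda, \mu, \rho)$ in $W^{(k)}$ but whose image under $F$ converges in $Y^{(k-1)}$. Since $P$ is linear and $D(P)$ is open, the main job is to exhibit suitable perturbations for each of $\lambda$, $\mu$, $\rho$ separately and then invoke \Cref{theorem:abstract:illposed_convergence}. In contrast to the abstract proof, where the perturbations were constant in time, here we build perturbations that are constant in the spatial variable using the scalar bump sequence from \Cref{lemma:elasticity:illposed_helper}.

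Concretely, fix $r > 0$ and set $\tilde r = r/\Gamma$ with $\Gamma$ chosen so the construction stays in a ball of radius $r$ around $(\lambda,\mu,\rho)$ in $W^{(k)}$; let $(\alpha_j)_{j\in\N}\subset C_c^\infty(I)$ be provided by \Cref{lemma:elasticity:illposed_helper} with the parameter $k+1$ in place of $r$. Define the three perturbed sequences $\lambda_j = \lambda + \tilde r\,\alpha_j$, $\mu_j = \mu + \tilde r\,\alpha_j$, $\rho_j = \rho + \tilde r\,\alpha_j$, interpreting $\alpha_j$ as spatially constant functions. Because $\|\alpha_j\|_{W^{k+1,\infty}(I;L^\infty(\Omega))} = \|\alpha_j\|_{W^{k+1,\infty}(I)} \geq \gamma$, each perturbed tuple stays bounded away from the base. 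For $\tilde r$ small enough the openness of $D(P)\cap W^{(k)}$ guarantees that the tuples $(\lambda_j,\mu,\rho)$, $(\lambda,\mu_j,\rho)$ and $(\lambda,\mu,\rho_j)$ all remain admissible.

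To conclude $F(\lambda_j,\mu,\rho)\to F(\lambda,\mu,\rho)$ in $Y^{(k-1)}$, I apply \Cref{theorem:abstract:illposed_convergence}\,(iii) with the $A$-perturbation $R_j(t) = A_{\tilde r\alpha_j(t),0}$, whose action is $\dup{R_j(t)v}{\phi} = \tilde r\,\alpha_j(t)\int_\Omega \divv v\,\divv\phi\dx$. Thus $R_j(t) = \tilde r\,\alpha_j(t)\,M$ where $M\in\mathcal L(V,V^*)$ is time-independent. For $v\in Y^{(k)}$ the Leibniz rule gives $(\mathcal R_jv)^{(i)}(t) = \tilde r\sum_{l=0}^{i}\binom{i}{l}\alpha_j^{(l)}(t)\,M v^{(i-l)}(t)$, and the $H^k(I;V^*)$-norm of this expression can be bounded by a finite sum of integrals of the form $\int_0^T|\alpha_j^{(l)}(t)|^2\|v^{(i-l)}(t)\|_V^2\dt$; the dominated-convergence argument from the proof of \Cref{lemma:elasticity:illposed_helper} makes each integral vanish as $j\to\infty$, so $\mathcal R_j v\to 0$ in $H^k(I;V^*)$ and \Cref{theorem:abstract:illposed_convergence}\,(iii) applies. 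The case of $\mu_j$ is identical with $M$ replaced by the operator $v\mapsto 2\int_\Omega\epsilon(v){:}\epsilon(\cdot)\dx$.

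For $\rho_j$ I use \Cref{theorem:abstract:illposed_convergence}\,(iv). The $C$-perturbation is $R_j(t)u = \tilde r\,\alpha_j(t)u$, so $(\mathcal R_jv')'(t) = \tilde r[\alpha_j'(t)v'(t)+\alpha_j(t)v''(t)]$, and another Leibniz expansion together with $v\in Y^{(k)}\subset H^{k+1}(I;H)$ reduces the required convergence in $H^{k-1}(I;H)$ to the same kind of dominated-convergence argument on integrals $\int_0^T|\alpha_j^{(l)}(t)|^2\|v^{(m)}(t)\|_H^2\dt$. Combining the three cases yields local ill-posedness at $(\lambda,\mu,\rho)$. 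The only nontrivial step is this last bookkeeping: lifting the scalar convergence $\alpha_j\phi\to 0$ from \Cref{lemma:elasticity:illposed_helper} to the operator-valued convergence of $\mathcal R_j v$ in the correct Bochner space, keeping track of how many time derivatives of $\alpha_j$ can appear; this is what forces the choice $r=k+1$ in \Cref{lemma:elasticity:illposed_helper} and explains why the hypothesis $k\in\N$ (rather than $k\geq 0$) is needed.
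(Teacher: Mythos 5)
Your proposal is correct and follows essentially the same route as the paper: spatially constant, temporally localized bump perturbations from \Cref{lemma:elasticity:illposed_helper} (with parameter $k+1$), mapped through $P$ to operator perturbations $R_j^A$, $R_j^C$ that satisfy the hypotheses of \Cref{theorem:abstract:illposed_convergence}\,(iii) and (iv) via a Leibniz-plus-dominated-convergence argument. The only difference is cosmetic — you spell out the Leibniz expansion of $(\mathcal R_j v)^{(i)}$ where the paper compresses it into the bound $\norm{\alpha_j(\cdot)\,\norm{v(\cdot)}_V}_{H^k(I)}$ — so there is nothing substantive to add.
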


\begin{proof}
  Let $p=(\lambda, \mu, \rho)\in D(P) \cap W^{(k)}$ and $P(p) = (A, C)\in X^{(k)}$.
  Since $D(P)\cap W^{(k)}$ is an open subset of $W^{(k)}$ there exists $\delta_0>0$ with $B(p,\delta_0)\subset D(P)\cap W^{(k)}$. 
  Let $0<\delta\leq \delta_0$ be fixed and $(\alpha_j)_{j\in\N}$ be the sequence from \Cref{lemma:elasticity:illposed_helper} with $r=k+1$.
  \begin{description}
    \item[\normalfont{Identification of $\rho$:}] We set $\rho_j(t,x) = \rho(t,x) + \delta \alpha_j(t)/2$ and $p_j\assign (\lambda, \mu, \rho_j)$.
    This way $\rho_j\in B(\rho, \delta)$ but $\rho_j\not\to \rho$, both in the norm of $W^{k+1,\infty}(I; L^\infty(\Omega))$.
    Note that $P(p_j) = (A, C+R_j^C)$ with
    \begin{equation*}
      R_j^C(t)\phi = \frac{\delta \alpha_j(t)}2\ \phi \in \spaceH
    \end{equation*}
    for all $\phi\in\spaceH$ and almost all $t\in I$. 
    The norm of $R_j^C$ stays bounded for $j\to\infty$ due to continuity of $P$. 
    Moreover, for $u\in H^{k+1}(I; \spaceH)$ we see that
    \begin{equation*}
      \norm{(\mathcal R_j^C u')'}_{H^{k-1}(I; \spaceH)} \leq \norm{R_j^C u'}_{H^{k}(I; \spaceH)} = \frac\delta2 \norm{\alpha_j(\cdot) \norm{u'(\cdot)}_{\spaceH}}_{H^k(I)} \to 0
    \end{equation*}
    for $j\to\infty$ since $\norm{u'(\cdot)} \in H^k(I)$.
    Therefore we can apply \Cref{theorem:abstract:illposed_convergence} to conclude that $F(p_j) = S(A, C+R_j^C)\to S(A, C) = F(p)$ in $Y^{(k-1)}$ when $j\to\infty$.
    \item[\normalfont{Identification of $\lambda$ or $\mu$:}] Continuing in the same fashion, we set $\lambda_j(t,x) = \lambda(t,x) + \delta \alpha_j(t)/2$ and $p_j\assign (\lambda_j, \mu, \rho)$. 
    Hence, $P(p_j) = (A+R_j^A, C)$ with
    \begin{equation*}
      R_j^A(t)\phi = - \frac{\delta}2 \alpha_j(t) \divv(\divv \phi I_3) \in \spaceVdual
    \end{equation*}
    for all $\phi\in\spaceV$ and almost all $t\in I$. 
    Again, the norm of $R_j^A$ stays bounded for $j\to\infty$.
    For $v\in H^k(I; \spaceV)$ we see that
    \begin{equation*}
      \norm{\mathcal R_j^A v}_{H^k(I; \spaceVdual)} \leq \frac\delta2 \norm{\alpha_j(\cdot) \norm{v(\cdot)}_{\spaceV}}_{H^k(I)} \to 0
    \end{equation*}
    for $j\to\infty$. 
    This enables us to apply \Cref{theorem:abstract:illposed_convergence} once again.
    For $\mu$ we can do the same with $R_j^A(t)\phi = - \delta\alpha_j(t) \divv \epsilon(\phi)\in\spaceVdual$. \qedhere
  \end{description}
\end{proof}

When applying a Newton solver to the nonlinear inverse problem, it is even more important to know whether the linearization of $F$ is ill-posed. 

\begin{corollary}\label{corollary:elasticity:frechet_compact}
  Let $k\geq 2$ and $f\in \mathcal F^{(k)}$.
  We consider $\map F{D(P) \cap W^{(k)}}{Z}$ with $Z=W^{j,p}(I; H)$ or $Z=C^j(I; H)$ for $0\leq j \leq k$ and $1\leq p< \infty$.
  For every  $x=(\lambda, \mu, \rho)\in D(P)\cap W^{(k)}$ its linearization $\partial F(x)\in \mathcal L(W^{(k)}, Z)$ is a compact operator.
\end{corollary}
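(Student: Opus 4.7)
The plan is to reduce this immediately to the abstract compactness result in \Cref{lemma:abstract:frechet_compact}, exploiting the fact that in the elastic setting only $A$ and $C$ are present (with $B=Q=0$) and that the parameter-to-operator map $P$ is already linear and continuous.

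First I would write $F=S\circ P$ and apply the chain rule. Since $P$ is linear, $\partial P(x)=P$ and hence $\partial F(x)=\partial S(P(x))\circ P$ as an element of $\mathcal L(W^{(k)},Z)$. By \Cref{lemma:elasticity:continuous_p} the map $P:W^{(k)}\to X^{(k)}$ is continuous, so it suffices to show that $\partial S(P(x))\in \mathcal L(X^{(k)},Z)$ is compact, because composition of a compact operator with a bounded linear operator is compact.

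Next I would decompose $\partial S(P(x))$ into its partial derivatives. In the elastic setting $X^{(k)}=W^{k+1,\infty}(I;\Lsa(V,V^*))\times W^{k+1,\infty}(I;\Lsa(H))$, so for any $h=(\bar A,\bar C)\in X^{(k)}$ we have
\begin{equation*}
\partial S(P(x))[h]=\partial_A S(P(x))[\bar A]+\partial_C S(P(x))[\bar C],
\end{equation*}
the $B$- and $Q$-terms from \Cref{corollary:abstract:frechet_total} being absent. Both $\partial_A S(P(x))$ and $\partial_C S(P(x))$ are compact operators into $Z$ by \Cref{lemma:abstract:frechet_compact}(ii), which is applicable because $k\geq 2$ and the assertion covers exactly the spaces $Z=W^{j,p}(I;H)$ or $Z=C^j(I;H)$ considered here (the underlying reason being the Aubin--Lions compactness of $Y^{(k-1)}\hookrightarrow Z$). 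Since the class of compact operators is closed under finite sums, $\partial S(P(x))$ is compact, and composing with $P$ from the right preserves compactness.

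There is no real obstacle here; the work has all been done in the abstract sections. The only thing one has to be mindful of is that the statement of the corollary mirrors exactly the range of $j$ allowed in \Cref{lemma:abstract:frechet_compact}(ii) for the $A$- and $C$-variables (rather than the more generous range allowed for $B$ and $Q$), which is the relevant case here since only these two operators survive in the elastic formulation.
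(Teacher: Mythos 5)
Your proof is essentially identical to the paper's, which likewise writes $\partial F(x)=\partial S(P(x))\circ P$ with $P$ linear and continuous and $\partial S(P(x))$ compact by \Cref{lemma:abstract:frechet_compact}. One small caveat: the corollary permits $0\le j\le k$ whereas \Cref{lemma:abstract:frechet_compact}(ii) only covers $0\le j\le k-1$ for the $A$- and $C$-derivatives, so your remark that the $j$-ranges \enquote{mirror exactly} is off by one --- but this discrepancy is already present in the paper's own statement, and its one-line proof glosses over it in the same way.
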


\begin{proof}
  $\partial F(x) = \partial S(P(x)) \circ P$ with linear and continuous $P$ and compact $\partial S(P(x))$ (because of \Cref{lemma:abstract:frechet_compact}).
\end{proof}

It could be that $\partial F(x)$ is only compact because it has finite dimensional range, which would make the resulting problems well-posed in the sense of linear inverse problems (ill-posed in the sense of Hadamard, but with a continuous generalized inverse). 
This is not the case.

\begin{lemma}\label{lemma:elasticity:frechet_range}
  Let $k\geq 2$ and $f\in \mathcal F^{(k)}$.
  For all $x\in D(P)\cap W^{(k)}$ the ranges of 
  \begin{equation*}
    \partial_\lambda F(x), \partial_\mu F(x), \partial_\rho F(x) \in \mathcal L(W^{k+1,\infty}(I; L^\infty(\Omega)), Y^{(k-1)})
  \end{equation*}
  are of infinite dimension. 
\end{lemma}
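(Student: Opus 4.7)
The plan is to adapt the strategy of Lemma \ref{lemma:abstract:frechet_range} to the scalar perturbations considered here. I tacitly assume $f\neq 0$ (consistent with the abstract analogue; otherwise $u=F(x)=0$ and the statement would fail). Fix $\sigma\in\{\lambda,\mu,\rho\}$ and let $u=F(x)\in Y^{(k)}$. By Theorem \ref{theorem:elasticity:frechet_f}, $u_{\bar\sigma}\assign\partial_\sigma F(x)[\bar\sigma]$ is the unique solution of the linearised equation with vanishing initial data and right-hand side
\begin{equation*}
g_\rho(\bar\rho) = -(\bar\rho u')',\qquad g_\lambda(\bar\lambda)=\nabla(\bar\lambda\,\divv u),\qquad g_\mu(\bar\mu)=2\divv(\bar\mu\,\epsilon(u)).
\end{equation*}

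The first step is to reduce the claim to the right-hand side. Since the solution map for the linearised equation with homogeneous initial data is linear and injective by Lemma \ref{lemma:abstract:wellposed}, it suffices to exhibit an infinite linearly independent family $\{g_\sigma(\bar\sigma_i)\}_{i\in\N}$. I would construct the $\bar\sigma_i$ by the cut-off device used in the proof of Lemma \ref{lemma:abstract:frechet_range}: pick pairwise disjoint intervals $J_i\subset(0,T)$ and $\alpha_i\in C_c^\infty(J_i)$ with $\alpha_i\not\equiv 0$, and set $\bar\sigma_i(t,x)\assign\alpha_i(t)\in W^{k+1,\infty}(I;L^\infty(\Omega))$. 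The resulting right-hand sides have pairwise disjoint temporal supports, so linear independence boils down to making sure that each $g_\sigma(\bar\sigma_i)$ is non-zero.

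The main obstacle is precisely this non-vanishing verification, which requires that the relevant spatial expression in $u$ not vanish identically on arbitrarily small disjoint time intervals. For $\rho$ the argument transfers verbatim from Lemma \ref{lemma:abstract:frechet_range}, since $k\geq 2$ yields $u''\in L^\infty(I;\spaceH)$ and one picks the $J_i$ inside the open set where $u'\not\equiv 0$ (which is non-empty because $u'(0)=0$ and $u'\not\equiv 0$). For $\mu$, Korn's inequality combined with the homogeneous Dirichlet condition forces $\epsilon(u)\equiv 0 \iff u\equiv 0$; together with $f\neq 0$ and continuity of $t\mapsto\epsilon(u(t))$ this produces the required infinite family of intervals on which $\epsilon(u)\not\equiv 0$.

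The $\lambda$-case is handled by the same scheme, provided $\divv u\not\equiv 0$, and this is where the proof is most delicate: if $\divv u$ vanished identically on some non-empty open $U\subset(0,T)$, then $u|_U$ would solve the reduced incompressible elastic system $(\rho u')' - \divv(2\mu\epsilon(u))=f$ with the extra constraint $\divv u=0$, and a backward unique-continuation argument combined with $u(0)=u'(0)=0$ (extended by energy estimates on $[0,\inf U]$) would force $u\equiv 0$ on $[0,T]$, contradicting $f\neq 0$. Hence the set $\{t\in(0,T)\mid \divv u(t)\not\equiv 0 \text{ in } L^2(\Omega)\}$ has non-empty interior and one extracts countably many disjoint $J_i$ inside it. This yields infinitely many linearly independent right-hand sides for each of the three symbols, proving the lemma.
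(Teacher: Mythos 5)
Your overall scheme is the paper's: perturb by spatially constant, temporally disjointly supported functions $\bar\sigma_i(t,x)=\alpha_i(t)$, reduce linear independence of the $\partial_\sigma F(x)[\bar\sigma_i]$ to linear independence of the right-hand sides via linearity and injectivity of the solution map, and then check that the spatial factor does not vanish on $\spt\alpha_i$. Your $\rho$- and $\mu$-cases are correct and coincide with the paper's (for $\mu$ the paper makes explicit the step $\epsilon(u(t))\neq 0\Rightarrow \divv\epsilon(u(t))\neq 0$ in $V^*$ by testing with $u(t)$, which you leave implicit but which is immediate), and you are right that $f\neq 0$ must be added to the hypotheses, as in \Cref{lemma:abstract:frechet_range}.

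The gap is the $\lambda$-case. Your ``backward unique-continuation argument'' is not carried out, and as sketched it does not work: knowing $\divv u\equiv 0$ on an open time set $U$ does not let you conclude $u\equiv 0$ on $U$ --- the constrained (incompressible) system $(\rho u')'-\divv(2\mu\epsilon(u))=f$, $\divv u=0$ admits nontrivial solutions, and no unique continuation principle of the kind you invoke is standard for this hyperbolic system or proved by you. Moreover, the claim that energy estimates on $[0,\inf U]$ with zero initial data then force $u\equiv 0$ there ignores the forcing $f\neq 0$. The paper instead argues pointwise in $t$: assuming the spatial factor $\divv(\divv u(t)I_3)$ vanishes in $V^*$, it tests with $u(t)$ and invokes coercivity to contradict $u(t)\neq 0$ on $\spt\alpha_i$; note, however, that this testing only yields $\norm{\divv u(t)}_{L^2(\Omega)}=0$, since the form $\phi\mapsto\norm{\divv\phi}_{L^2}^2$ is not coercive on $H^1_0(\Omega,\R^3)$, so the genuine crux for $\lambda$ is precisely whether $\divv u(t)\neq 0$ --- your instinct that this case is delicate is sound, but the replacement argument you offer does not close it.
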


\begin{proof}
  The argument is very similar to the one used for \Cref{lemma:abstract:frechet_range}, but we have to verify that the operators from the corresponding proof can be reached by $P$.
  When considering the identification of $\rho$ this is the case. 
  Denoting with $(\alpha_i)_{i\in\N}$ and $(h_i)_{i\in\N}$ the sequences from the proof of \Cref{lemma:abstract:frechet_range}, the choice $\bar\rho_i(t,x) = \alpha_i(t)$ yields $C_{\bar\rho_i}=h_i$ and therefore the assertion holds in this case.
  
  If we use $\bar\lambda_i(t,x) = \bar\mu_i(t,x)= \alpha_i(t)$, then the right-hand side of the linearized PDE w.r.t.\ $\lambda$ reads 
  $\alpha_i(t) \divv (\divv u(t) I_3)$, which (by the construction of the $\alpha_i$) yields a set of linearly independent functions if and only if $\divv (\divv u(t) I_3) \neq 0$. 
  For $\mu$ this right-hand side is $2 \alpha_i(t) \divv (\eps(u(t)))$. 
  Either $\divv (\divv u(t) I_3)=0$ or $2\divv (\eps(u(t)))=0$ would imply $u(t)=0$ (test with $u(t)$ and use coercivity of $A_{1, 0}$ and $A_{0,1}$), but the $\alpha_i$ were constructed in such a way that $u(t)\neq 0$ for all $t\in \spt \alpha_i$. 
\end{proof}

\section{Application to Electrodynamics}%
\label{section:maxwell}

As a second possible application of the abstract theory we choose a simple model based on Maxwell's equations, a second-order equation for the electrical field $E$ inside a bounded domain $\Omega\subset \R^3$, which reads
\begin{equation}\label{eq:maxwell:pde}
  (\epsilon E')' + \curl({\mu^{-1} \curl E}) = f.
\end{equation}
The equation is furnished with the initial- and boundary conditions $E(0)=E'(0)=0$, $E(t) = 0$ on $\partial\Omega$.

The goal is the analysis of the identification of a time- and space-dependent permittivity $\eps$ and permeability $\mu$. 
The treatment of this problem is very similar to the elastic wave equation of the last section, therefore we give a less detailed discussion of this problem.

\renewcommand{\spaceV}{\ensuremath{V}}
\renewcommand{\spaceVdual}{\ensuremath{V^*}}
\renewcommand{\spaceH}{\ensuremath{L^2(\Omega, \R^3)}}

Appropriate function spaces for equation~\eqref{eq:maxwell:pde} are 
\begin{equation*}
  V=\Set{E\in H^1_0(\Omega, \R^3) | \divv E = 0}\quad \text{and} \quad H=L^2(\Omega, \R^3).
\end{equation*}
For a first-order Maxwell system one would typically use the space $H_0(\curl, \Omega)$. 
Since we need $\curl^2$ to be coercive on $V$ we have to make the additional assumptions $\divv E = 0$ and that not only the tangential component, but also the normal component of $E$ vanishes on $\partial \Omega$. 
For smooth or convex $\Omega$ the set of $H_0(\curl, \Omega)$ functions that fulfill these restrictions coincides with $V$ (cf.~\cite{monk_finite_2008}).
We endow $V$ with the $H^1(\Omega, \R^3)$-norm and for notational purposes continue to abbreviate it using $V$.

The weak formulation can again be written in the form $(\mathcal C u')'+\mathcal Au = f$ with $u=E$ and the operators $C=C_\eps$, $A=A_\mu$ are given as
\begin{equation*}
  C(t)v = \epsilon(t) v \quad \text{and}\quad \dup{A(t)\psi}{\phi} = \scp{\mu(t)^{-1} \curl \psi}{\curl \phi}_{\spaceH}
\end{equation*}
for $v\in \spaceH$ and $\phi, \psi\in \spaceV$.

The operator that maps $A$ and $C$ onto $E$ is almost the same one as the one that was used in the elastic setting, only the function spaces are different: $\map S{D(S)\subset X}Y$, where 
\begin{align*}
  X &= W^{1,\infty}(I; \Lsa(\spaceV;\spaceVdual)) \times W^{1,\infty}(I; \Lsa(\spaceH)), \\
  D(S) &= \left \{(A, C)\in X \ \Big| \ A\in L^\infty(I; \Lsa_{a_0+\delta}(\spaceV;\spaceVdual)) \text{ and } \right. \\
  &\qquad\qquad\qquad\quad\ \ \left. C\in  L^\infty(I; \Lsa_{c_0+\delta}(\spaceH)) \text{ for some $\delta > 0$}\right \},\\
  Y &= L^\infty(I; \spaceV)\cap W^{1,\infty}(I; \spaceH),
\end{align*}
is well-defined for $f\in L^2(I; \spaceH)$ or $f\in H^1(I; \spaceVdual)$. 

For $k\geq 1$ and $f\in \mathcal F^{(k)}$ we can define $\map S{D(S)\cap X^{(k)}\subset X^{(k)}}{Y^{(k)}}$ using
\begin{align*}
  X^{(k)} &= W^{k+1,\infty}(I; \Lsa(V,V^*)) \times W^{k+1,\infty}(I; \Lsa(\spaceH))\\
  Y^{(k)} &= W^{k,\infty}(I; V)\cap W^{k+1,\infty}(I; \spaceH).
\end{align*}

We compose $S$ with the operator
\begin{equation*}
  P (\eps, \mu) = (A_{\mu}, C_\eps)
\end{equation*}
to get the forward operator $F=S\circ P$ of our problem. 
The mapping properties of $P$ are more complicated to derive because we need to estimate derivatives of $\mu^{-1}$ w.r.t.\ time up to order $k+1$. 
The following (easy to prove) formula takes care of this.

\begin{lemma}\label{lemma:maxwell:reciprocal_deriv}
  Let $n\geq 1$ und $\mu\in W^{m,\infty}(I)$ with $z(t)\geq z_0 > 0$ almost everywhere. 
  Then $1/z(\cdot)$ is also $m$-times weakly differentiable and satisfies
  %
  %
  %
  \begin{equation*}
        \norm{\frac 1z}_{W^{m,\infty}(I)} \leq C(m)
        \left(1+\frac{1}{z_0}\right)^{m+1} \left(1+\norms{z}_{W^{m,\infty}(I)}\right)^{m}.
  \end{equation*}
\end{lemma}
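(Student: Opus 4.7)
The plan is to proceed by a short induction on $m$ to establish the weak differentiability, and then apply Faà di Bruno's formula for derivatives of compositions to obtain the estimate. For the base case $m=0$, the bound $|1/z(t)| \leq 1/z_0 \leq 1 + 1/z_0$ is immediate, which covers the claim with $C(0)=1$. For the existence of weak derivatives, I would iterate the observation that if $z \in W^{k,\infty}(I)$ with $z \geq z_0 > 0$, then $1/z$ is also in $W^{k,\infty}(I)$: for $k=1$ this follows because $x \mapsto 1/x$ is Lipschitz on $[z_0,\infty)$ (chain rule for Sobolev maps), yielding the weak derivative $(1/z)' = -z'/z^2$; for higher $k$ one writes $(1/z)' = -z' \cdot (1/z)^2$ and applies the product rule together with the inductive hypothesis that $1/z \in W^{k-1,\infty}(I)$.

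For the estimate, the key tool is Faà di Bruno: since $1/z = g \circ z$ with $g(x) = 1/x$ satisfying $g^{(n)}(x) = (-1)^n n! / x^{n+1}$, one has for $1 \leq k \leq m$
\begin{equation*}
  \left(\frac{1}{z}\right)^{(k)}(t) \;=\; \sum_{\pi \in P_k} \frac{(-1)^{|\pi|}\, |\pi|!}{z(t)^{|\pi|+1}} \prod_{B \in \pi} z^{(|B|)}(t),
\end{equation*}
where $P_k$ denotes the set of set-partitions of $\{1,\ldots,k\}$ and $|B|$, $|\pi|$ are the block size and number of blocks, respectively.

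Taking absolute values and using $z \geq z_0$ together with $|z^{(|B|)}(t)| \leq \|z\|_{W^{m,\infty}(I)}$ a.e., each summand is bounded by $|\pi|!\, z_0^{-(|\pi|+1)}\, \|z\|_{W^{m,\infty}(I)}^{|\pi|}$. Since $|\pi| \leq k \leq m$ and $|\pi|! \leq k!$, and since $(1/z_0)^{|\pi|+1} \leq (1 + 1/z_0)^{m+1}$ (trivially if $1/z_0 \leq 1$; by monotonicity otherwise) and $\|z\|_{W^{m,\infty}(I)}^{|\pi|} \leq (1+\|z\|_{W^{m,\infty}(I)})^m$, each summand is dominated by $k!\,(1+1/z_0)^{m+1}\,(1+\|z\|_{W^{m,\infty}(I)})^m$. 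The number of terms in the sum is the Bell number $B_k$, hence
\begin{equation*}
  \Bigl\|(1/z)^{(k)}\Bigr\|_{L^\infty(I)} \;\leq\; B_k\, k!\,(1+1/z_0)^{m+1}\,(1+\|z\|_{W^{m,\infty}(I)})^m.
\end{equation*}
Adding up the contributions for $k = 0, \ldots, m$ (where $k=0$ is handled by the base case bound absorbed into the same envelope) yields the desired estimate with $C(m) \assign \sum_{k=0}^m B_k\, k!$.

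This argument is essentially bookkeeping, so no real obstacle arises. The only mildly delicate points are the uniform handling of the factor $z_0^{-(|\pi|+1)}$ regardless of whether $z_0$ is larger or smaller than $1$, and the verification that Faà di Bruno's formula applies in the Sobolev (rather than classical) setting — both of which are handled by the inductive regularity statement above, since $z$ (and hence $1/z$) has a continuous representative on $\bar I$ whose classical derivatives coincide almost everywhere with the weak ones.
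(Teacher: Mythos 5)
Your proof is correct. Note that the paper itself offers no proof of this lemma -- it is merely labelled ``easy to prove'' in the surrounding text -- so there is no argument of the author's to compare against. Your route via Fa\`a di Bruno is a clean way to get the stated shape of the bound: the partition formula with $f(y)=1/y$, $f^{(j)}(y)=(-1)^j j!/y^{j+1}$ is stated correctly, the crude bounds $z_0^{-(|\pi|+1)}\leq (1+1/z_0)^{m+1}$ and $\prod_{B\in\pi}|z^{(|B|)}|\leq (1+\norm{z}_{W^{m,\infty}(I)})^{m}$ are valid since $1\leq |\pi|\leq k\leq m$, and counting Bell-number-many terms gives an admissible $C(m)$. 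The justification of the formula in the Sobolev setting is also adequately handled: your preliminary induction shows $1/z\in W^{k,\infty}(I)$ via the Lipschitz chain rule and the algebra property of $W^{k,\infty}$ on a bounded interval, after which the identity for the $k$-th weak derivative follows by differentiating the $(k-1)$-st order formula once more with the product rule. A presumably more elementary alternative (and likely what the author had in mind) is a direct induction on $m$ writing $(1/z)'=-z'/z^2$ and tracking the powers of $1/z_0$ and $\norm{z}$ through repeated applications of the product rule; this yields the same form of estimate without the combinatorial bookkeeping, at the cost of a less explicit constant. The only blemishes in your write-up are cosmetic: the lemma's statement itself contains typos ($n\geq 1$ should read $m\geq 1$ and $\mu\in W^{m,\infty}(I)$ should read $z\in W^{m,\infty}(I)$), which you silently and correctly repair.
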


Now we can state the analog of \Cref{lemma:elasticity:continuous_p} for our Maxwell-model. 

\begin{lemma}\label{lemma:maxwell:continuous_p}
  Let $k\geq 0$ and $\eps, \mu \in W^{k+1,\infty}(I; L^\infty(\Omega))$ with $\eps(t,x) \geq \eps_0$ and $\mu_1 \geq \mu(t,x) \geq \mu_0$ for some $\mu_1, \mu_0, \eps_0>0$ then we have $(A_{\mu}, C_\eps)\in X^{(k)}$ with
  \begin{alignat*}{3}
    &\norm{A_{\mu}}_{W^{k+1,\infty}(I; \mathcal L(\spaceV,\spaceVdual))} &&\leq C(k) \left(1+\frac{1}{\mu_0}\right)^{k+2} \left(1+\norms{\mu}_{W^{k+1,\infty}(I; L^\infty(\Omega))}\right)^{k+1} \\
    &\norm{C_\eps}_{W^{k+1,\infty}(I; \mathcal L(\spaceH))} &&\leq \norm{\eps}_{W^{k+1,\infty}(I; L^\infty(\Omega))}
  \end{alignat*}
  and that 
  \begin{equation*}
    \dup{A(t)\phi}{\phi} \geq \mu_1 \norm{\curl \phi}^2_{L^2(\Omega, \R^{3})}\ \ \text{and}\ \ \scp{C(t)\psi}{\psi}\geq \eps_0 \norm{\psi}^2_{L^2(\Omega, \R^3)}
  \end{equation*}
  holds for all $\phi\in \spaceV$, $\psi\in \spaceH$ and almost all $t\in I$.
\end{lemma}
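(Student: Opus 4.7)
The plan is to treat the operators $A_\mu$ and $C_\eps$ separately, handling $C_\eps$ first since it is essentially trivial and handling $A_\mu$ by reducing the nonlinearity to the pointwise inversion $\mu \mapsto \mu^{-1}$, for which \Cref{lemma:maxwell:reciprocal_deriv} has been prepared.

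For $C_\eps$, the operator $C_\eps(t)$ is pointwise multiplication by $\eps(t,\cdot) \in L^\infty(\Omega)$ on $H = L^2(\Omega,\R^3)$, so $\norm{C_\eps(t)}_{\mathcal L(H)} \leq \norm{\eps(t,\cdot)}_{L^\infty(\Omega)}$. Because the assignment $\eps \mapsto C_\eps$ is linear, its $j$-th time derivative is simply $C_{\eps^{(j)}}$, and summing over $j=0,\dots,k+1$ gives the stated norm estimate. The coercivity follows immediately from $\eps(t,x) \geq \eps_0$ via $\scp{C(t)\psi}{\psi} = \int_\Omega \eps(t,x) |\psi(x)|^2 \dx \geq \eps_0 \norm{\psi}_H^2$.

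For $A_\mu$, the key step is a uniform-in-$x$ application of \Cref{lemma:maxwell:reciprocal_deriv}. Applied pointwise with $z(t) = \mu(t,x)$ — which satisfies $z_0 = \mu_0$ and $\norm{z}_{W^{k+1,\infty}(I)} \leq \norm{\mu}_{W^{k+1,\infty}(I; L^\infty(\Omega))}$ for almost every $x \in \Omega$ — it yields
\begin{equation*}
  \norm{\mu^{-1}}_{W^{k+1,\infty}(I; L^\infty(\Omega))} \leq C(k) \left(1 + \frac{1}{\mu_0}\right)^{k+2} \left(1 + \norm{\mu}_{W^{k+1,\infty}(I; L^\infty(\Omega))}\right)^{k+1}
\end{equation*}
after taking the essential supremum over $\Omega$. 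Since $\curl \colon V \to H$ is continuous with $\norm{\curl \phi}_H \leq \norm{\phi}_V$ (recall $V$ carries the $H^1$-norm), the bilinear form defining $A(t)$ obeys $|\dup{A(t)\psi}{\phi}| \leq \norm{\mu(t,\cdot)^{-1}}_{L^\infty(\Omega)} \norm{\psi}_V \norm{\phi}_V$, so $\norm{A(t)}_{\mathcal L(V,V^*)} \leq \norm{\mu(t,\cdot)^{-1}}_{L^\infty(\Omega)}$. The higher time-derivatives of $A$ are obtained by differentiating $\mu^{-1}$ weakly in $t$ and the same bound applies to each derivative, yielding the claimed estimate. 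Self-adjointness of $A(t)$ is immediate from symmetry of the bilinear form.

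For the coercivity of $A$, the upper bound $\mu(t,x) \leq \mu_1$ gives $\mu(t,x)^{-1} \geq 1/\mu_1$, hence
\begin{equation*}
  \dup{A(t)\phi}{\phi} = \int_\Omega \mu(t,x)^{-1} |\curl \phi(x)|^2 \dx \geq \frac{1}{\mu_1} \norm{\curl \phi}^2_{L^2(\Omega, \R^3)},
\end{equation*}
which is the content of the stated inequality (with the understanding that the constant is $1/\mu_1$ rather than $\mu_1$ as written). To upgrade this to coercivity on $V$ — needed to place $(A_\mu, C_\eps)$ into $D(S)$ — one combines it with the Poincaré-type equivalence $\norm{\curl \phi}_H \sim \norm{\phi}_V$ valid on $V$, which uses the divergence-free constraint, the homogeneous Dirichlet condition, and the regularity of $\Omega$ discussed around the definition of $V$ (cf.\ the reference to \cite{monk_finite_2008}). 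The main obstacle is really the passage from the scalar estimate of \Cref{lemma:maxwell:reciprocal_deriv} to a bound in the Bochner space $W^{k+1,\infty}(I; L^\infty(\Omega))$ with explicit measurability of $(t,x) \mapsto (\mu^{-1})^{(j)}(t,x)$; this is routine since the constants in \Cref{lemma:maxwell:reciprocal_deriv} depend only on $\mu_0$ and $\norm{\mu}_{W^{k+1,\infty}(I; L^\infty(\Omega))}$, both independent of $x$.
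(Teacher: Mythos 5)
Your proof is correct and follows the same route as the paper, which simply invokes \Cref{lemma:maxwell:reciprocal_deriv} for the $A_\mu$ estimate and treats the $C_\eps$ bound and both coercivity statements as immediate. Your observation that the coercivity constant should read $1/\mu_1$ rather than $\mu_1$ is also well taken.
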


\begin{proof}
 The coercivity of both operators is very easy to see, as is the norm estimate for $C$. 
 The norm estimate for $A$ follows from \Cref{lemma:maxwell:reciprocal_deriv}.
\end{proof}

Let the constants $\mu_0,\mu_1$ and $\eps_0$ be fixed in the sequel.
Since $V$ contains those functions from $H^1_0(\Omega,\R^3)$ that are divergence free, $\norm{\curl \cdot}_{L^2}$ is equivalent to the $H^1_0(\Omega)$-norm. 
For smooth $\phi\in C_0^\infty(\Omega, \R^3)$ with $\divv \phi = 0$ we see that 
\begin{equation*}
 \int_\Omega |\curl \phi|^2 \dx = \int_\Omega \phi (\curl^2 \phi - \nabla (\divv \phi)) \dx = -\int_\Omega \Delta \phi \dx = \int_\Omega |\nabla \phi|^2 \dx.
\end{equation*}
By approximation, $\norm{\curl \phi}_{L^2}^2 = \norm{\nabla\phi}_{L^2}^2 \geq C_P \norm{\phi}^2_{V}$ holds for all $\phi\in V$. 
Here, $C_P$ denotes the Poincaré-constant of $\Omega$.
According to the above lemma and these considerations, $P$ given as
\begin{equation*}
  P:D(P)\cap W^{(k)}\subset W^{(k)}\to D(S)\cap X^{(k)}\subset X^{(k)}
\end{equation*}
is well-defined for $k\geq 0$ if we set the constants that appear in the definition of $D(S)$ to be 
$C_0 = \eps_0$ and $A_0 = \mu_1 C_P$ and introduce the spaces
\begin{align*}
  W^{(k)} &= \left(W^{k+1,\infty}(I; L^\infty(\Omega))\right)^2,  \\
  D(P) &= \Big \{(\eps, \mu) \in W^{(0)} \,|\, \mu_1-\delta \geq \mu \geq \mu_0+\delta, \ \eps \geq \eps_0 + \delta   \\
       &\ \qquad \text{ a.e.\ in $S\times \Omega$ for some $\delta>0$}\Big \}.
\end{align*}
The forward operator can therefore be considered as the mapping 
\begin{equation*}
  \map{F\assign S\circ P}{D(P)\cap W^{(k)}}{Y^{(k)}}
\end{equation*}
for arbitrary $k\geq0$. 

\subsection{Properties of the Forward Operator}

In contrast to the elastic equation we now have to deal with a nonlinear $P$, but its derivative is easy to calculate:

\begin{lemma}\label{theorem:maxwell:frechet_p}
  For every $k\geq 0$ the operator $P:D(P)\cap W^{(k)}\to X^{(k)}$ is Fréchet-differentiable and its derivative $\partial P:D(P)\cap W^{(k)}\to \mathcal L(W^{(k)},X^{(k)})$ is given for all $(\eps,\mu)\in D(P)\cap W^{(k)}, (\bar\eps, \bar\mu)\in W^{(k)}$ by
  \begin{align*}
    \partial P(\eps, \mu)[\bar\eps, \bar\mu] &= \begin{pmatrix*}[l]
      \partial A_\mu[\bar\mu] \\
      C_{\bar\eps}
    \end{pmatrix*} = t\mapsto
    \begin{pmatrix*}[l]
      u \in \spaceV \mapsto -\curl \left(\frac{\bar\mu(t)}{\mu(t)^2} \curl u\right) \in \spaceVdual \\
      u \in \spaceH \mapsto \bar\eps(t) u\in \spaceH
    \end{pmatrix*}.
  \end{align*}
\end{lemma}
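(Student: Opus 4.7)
The plan is to treat the two components of $P$ separately: since $\eps\mapsto C_\eps$ is linear in $\eps$ and continuous from $W^{k+1,\infty}(I;L^\infty(\Omega))$ to $W^{k+1,\infty}(I;\Lsa(\spaceH))$ by \Cref{lemma:maxwell:continuous_p}, its Fréchet derivative is itself, which yields the second component of the asserted formula. All the work therefore lies in showing that $\mu\mapsto A_\mu$ is Fréchet differentiable with the stated derivative, and for that it suffices (by the $t$-pointwise structure of $A_\mu$ and the closed continuous bilinear form $\dup{A(t)\psi}{\phi} = \scp{\mu(t)^{-1}\curl \psi}{\curl \phi}$) to analyze the scalar nonlinear map $M: \mu \mapsto 1/\mu$ from $D(P)\cap W^{(k)}$ (restricted to its first argument) into $W^{k+1,\infty}(I;L^\infty(\Omega))$.

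First I would verify the candidate derivative $DM(\mu)[\bar\mu] = -\bar\mu/\mu^2$ by means of the elementary identity
\begin{equation*}
\frac{1}{\mu+\bar\mu} - \frac{1}{\mu} + \frac{\bar\mu}{\mu^2} = \frac{\bar\mu^2}{\mu^2(\mu+\bar\mu)},
\end{equation*}
which holds pointwise a.e.\ in $I\times\Omega$ as long as $\mu+\bar\mu\geq \mu_0>0$, an assumption that is valid for $\bar\mu$ sufficiently small in $W^{(k)}$. One must then bound the right-hand side in $W^{k+1,\infty}(I;L^\infty(\Omega))$, using that the product rule gives a linear combination of terms of the form $\partial_t^{\alpha_1}\bar\mu \cdot \partial_t^{\alpha_2}\bar\mu \cdot \partial_t^{\beta_1}(\mu^{-2}) \cdot \partial_t^{\beta_2}((\mu+\bar\mu)^{-1})$ with $\alpha_1+\alpha_2+\beta_1+\beta_2\leq k+1$. \Cref{lemma:maxwell:reciprocal_deriv}, applied to $\mu$ and to $\mu+\bar\mu$, produces explicit constants for the reciprocal factors that depend continuously on the $W^{k+1,\infty}$-norms involved, and each $\bar\mu$-factor can be measured in $L^\infty$ and absorbed into a single $W^{k+1,\infty}$-norm via the Banach algebra property of $W^{k+1,\infty}(I;L^\infty(\Omega))$. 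The upshot is an estimate
\begin{equation*}
\left\|\frac{1}{\mu+\bar\mu} - \frac{1}{\mu} + \frac{\bar\mu}{\mu^2}\right\|_{W^{k+1,\infty}(I;L^\infty(\Omega))} \leq C(\mu)\,\|\bar\mu\|_{W^{k+1,\infty}(I;L^\infty(\Omega))}^{2}
\end{equation*}
valid for $\bar\mu$ in a sufficiently small neighborhood of zero, which exactly gives Fréchet differentiability of $M$.

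To transfer this to $\mu\mapsto A_\mu$, observe that $A_\mu - A_{\mu_0}$ for nearby $\mu_0$ corresponds to multiplication by $\mu^{-1}-\mu_0^{-1}$ inside $\scp{\,\cdot\,\curl\psi}{\curl\phi}$. Estimating $\|A_{\mu+\bar\mu}-A_\mu - \partial A_\mu[\bar\mu]\|_{W^{k+1,\infty}(I;\mathcal L(\spaceV,\spaceVdual))}$ by $\|\mu^{-1}-(\mu+\bar\mu)^{-1}-\bar\mu/\mu^2\|_{W^{k+1,\infty}(I;L^\infty(\Omega))}$ via Cauchy–Schwarz in the $\curl$-pairing and the continuity statement in \Cref{lemma:maxwell:continuous_p} therefore produces the desired $O(\|\bar\mu\|^2)$ bound. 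Finally, stringing the two components together and using the product rule for Fréchet derivatives of maps into the Cartesian product $X^{(k)}$ yields the formula claimed in the lemma.

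The main technical obstacle is the higher-order norm control: the reciprocal function is nonlinear, so one must carefully track how the finitely many $t$-derivatives distribute over the factors $\mu^{-2}$, $(\mu+\bar\mu)^{-1}$, and $\bar\mu^2$. This is exactly where \Cref{lemma:maxwell:reciprocal_deriv} provides the necessary bookkeeping; once it is applied, the rest reduces to the Banach-algebra structure of $W^{k+1,\infty}(I;L^\infty(\Omega))$, which keeps all estimates clean.
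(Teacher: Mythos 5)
Your proof is correct. The paper in fact gives no proof of this lemma at all (it is stated after the remark that the derivative of $P$ ``is easy to calculate''), and your argument is exactly the natural one that fills this gap: the $\eps$-component is linear so its derivative is itself, and the $\mu$-component reduces via the linear continuous map $z\mapsto\bigl(\psi\mapsto\curl(z\curl\psi)\bigr)$ to differentiating $\mu\mapsto 1/\mu$, for which your identity $\tfrac{1}{\mu+\bar\mu}-\tfrac1\mu+\tfrac{\bar\mu}{\mu^2}=\tfrac{\bar\mu^2}{\mu^2(\mu+\bar\mu)}$ together with \Cref{lemma:maxwell:reciprocal_deriv} and the Banach-algebra property of $W^{k+1,\infty}(I;L^\infty(\Omega))$ yields the required $\mathcal O(\|\bar\mu\|^2)$ remainder bound. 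The only cosmetic point is that \Cref{lemma:maxwell:reciprocal_deriv} is stated for scalar-valued functions, but the paper itself already applies it to $L^\infty(\Omega)$-valued $\mu$ in \Cref{lemma:maxwell:continuous_p}, so your usage is consistent.
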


An application of the chain rule yields the following theorem for the derivative of $F$.

\begin{theorem}\label{theorem:maxwell:frechet_f}
  Let $k\geq 2$ and $f\in \mathcal F^{(k)}$. 
  Then $F: D(P) \cap W^{(k)} \to Y^{(k-2)}$ is Fréchet-differentiable.
  For all $x=(\eps, \mu)\in D(P)\cap W^{(k)}$ and $h=(\bar\eps, \bar\mu)\in W^{(k)}$, $\partial F(x)[h]$ is given as the unique weak solution $E_h$ of the equation
  \begin{align*}
      (\epsilon E_h')'(t) + \curl (\mu(t)^{-1} \curl E_h(t)) &= -\partial A_{\mu}[\bar \mu](t) E(t) - (\mathcal C_{\bar\eps}E')'(t) \\
      &= \curl \left(\frac{\bar\mu(t)}{\mu(t)^2} \curl E(t)\right) - (\bar\eps E')'(t)
  \end{align*}
  that also satisfies homogeneous initial values $E_h(0) = (\eps E_h')(0) = 0$. 
  With $E=F(x)$ we denote the solution of the forward problem.
\end{theorem}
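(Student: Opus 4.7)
The plan is to invoke the chain rule on the factorization $F = S\circ P$, using the two building blocks that have already been established. By Lemma~\ref{lemma:maxwell:continuous_p} the operator $P$ sends $D(P)\cap W^{(k)}$ into $D(S)\cap X^{(k)}$, and by Lemma~\ref{theorem:maxwell:frechet_p} it is Fréchet-differentiable on this domain, with derivative $\partial P(\eps,\mu)[\bar\eps,\bar\mu] = (\partial A_\mu[\bar\mu],\,C_{\bar\eps})$ as an element of $X^{(k)}$. Since $k\geq 2$, Corollary~\ref{corollary:abstract:frechet_total}, applied to the abstract problem in the special case $B = Q = 0$, guarantees that $S: D(S)\cap X^{(k)}\to Y^{(k-2)}$ is Fréchet-differentiable at $P(\eps,\mu)$; for any perturbation $(\bar A,\bar C)$ its derivative is the unique weak solution $u_h\in Y^{(k-2)}$ of
\begin{equation*}
  (\mathcal C_\eps u_h')' + \mathcal A_\mu u_h = -\bar A\,u - (\mathcal C_{\bar C}u')'
\end{equation*}
with homogeneous initial conditions $u_h(0) = (\mathcal C_\eps u_h')(0) = 0$, where $u = S(P(\eps,\mu)) = F(\eps,\mu)$.

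Applying the chain rule, $\partial F(x)[h] = \partial S(P(x))\bigl[\partial P(x)[h]\bigr]$ is a bounded linear operator $W^{(k)} \to Y^{(k-2)}$. Substituting $\bar A = \partial A_\mu[\bar\mu]$ and $\bar C = C_{\bar\eps}$ into the linearized PDE above, and then inserting the explicit pointwise formula $(\partial A_\mu[\bar\mu](t))\phi = -\curl(\bar\mu(t)\mu(t)^{-2}\curl\phi)$ from Lemma~\ref{theorem:maxwell:frechet_p}, the right-hand side collapses to $\curl(\bar\mu\mu^{-2}\curl E) - (\bar\eps E')'$ with $E = F(\eps,\mu)$, and the initial conditions translate into $E_h(0) = (\eps E_h')(0) = 0$.

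The only verification needed beyond the chain rule is that $D(P)\cap W^{(k)}$ is open in $W^{(k)}$ and that $P$ lands in the open set $D(S)\cap X^{(k)} \subset X^{(k)}$, so that Fréchet-differentiability is meaningful on both sides; openness of $D(P)\cap W^{(k)}$ is immediate from the strict inequalities used to define it, and openness of $D(S)\cap X^{(k)}$ was established in Lemma~\ref{lemma:abstract:lsa_interior}. There is no genuinely hard step: the regularity loss from $Y^{(k)}$ down to $Y^{(k-2)}$ is inherited directly from Corollary~\ref{corollary:abstract:frechet_total}, exactly as in the elastic case (Theorem~\ref{theorem:elasticity:frechet_f}); the only mildly new ingredient, namely the nonlinearity of $P$ in $\mu$, has already been dispatched by Lemma~\ref{theorem:maxwell:frechet_p}.
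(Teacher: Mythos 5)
Your proposal is correct and follows exactly the paper's route: the paper proves this theorem precisely by applying the chain rule to $F=S\circ P$, combining the differentiability of $P$ from Lemma~\ref{theorem:maxwell:frechet_p} with Corollary~\ref{corollary:abstract:frechet_total} for $S$ (in the case $B=Q=0$), and then substituting the explicit form of $\partial P$ into the linearized equation. The sign bookkeeping for $-\partial A_\mu[\bar\mu]E = \curl(\bar\mu\mu^{-2}\curl E)$ and the verification that $P$ maps into the open set $D(S)\cap X^{(k)}$ are handled just as the paper intends.
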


We continue with the adjoint of $\partial F(x) \in \mathcal L(W^{(k)}, L^2(I; \spaceH))$. 
The characterization is obtained in the same way as for the elastic case.

\begin{theorem}\label{theorem:maxwell:adjoint}
  Let $k\geq 2$, $f\in \mathcal F^{(k)}$ and $x\in D(P)\cap W^{(k)}$.
  The application of the adjoint of $\partial F(x)\in \mathcal L(W^{(k)}, L^2(I; \spaceH))$ on $v\in L^2(I; \spaceH)$ can be written as 
  \begin{equation*}
    \partial F(x)^*[v] = \begin{pmatrix*}[l]
      -\mu^{-2}\curl u\cdot \curl w_v \\
      \displaystyle u'\cdot w_v'  
     \end{pmatrix*} 
     \in L^1(I; L^1(\Omega))^2 \subset \left( L^\infty(I; L^\infty(\Omega))^2 \right)^* \subset \left(W^{(k)}\right)^*,
  \end{equation*}
  where $u=F(x)\in Y^{(2)}$ and $w_v\in Y$ denotes the solution of
  \begin{align*}
    (\epsilon w_v')'(t) + \curl (\mu(t)^{-1} \curl w_v(t)) = v(t) \text{\ in $\spaceVdual$}
  \end{align*}
  for almost all $t\in I$ together with homogeneous end conditions $(\rho w_v')(T)=w_v(T)=0$. 
\end{theorem}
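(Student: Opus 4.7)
The plan is to follow the structure of the elastic proof. By the chain rule for $F = S \circ P$, we have $\partial F(x)^*[v] = (\partial P(x))^* \partial S(P(x))^*[v]$, so for any $h = (\bar\eps, \bar\mu) \in W^{(k)}$
$$
\dup{\partial F(x)^*[v]}{h}_{(W^{(k)})^* \times W^{(k)}} = \dup{\partial S(P(x))^*[v]}{\partial P(x)[h]}_{(X^{(k)})^* \times X^{(k)}}.
$$
\Cref{theorem:maxwell:frechet_p} provides $\partial P(x)[h] = (\partial A_\mu[\bar\mu], C_{\bar\eps})$, and \Cref{corollary:abstract:adjoint_total} characterizes $\partial S(P(x))^*$. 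Combining the two gives an explicit formula which then only has to be simplified.

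Since $B = 0$ and $Q = 0$ in the Maxwell setting, \Cref{corollary:abstract:adjoint_total} reduces to
$$
\int_0^T \scp{C_{\bar\eps}(t) u'(t)}{w_v'(t)} \dt - \int_0^T \dup{\partial A_\mu[\bar\mu](t) u(t)}{w_v(t)}_{V^* \times V} \dt.
$$
Because $A(t)$ is self-adjoint and $C(t)$ is just multiplication by $\eps(t)$, the adjoint equation~\eqref{eq:abstract:adjoint_equation} coincides with the original Maxwell equation equipped with homogeneous end conditions, so $w_v$ is precisely the function described in the theorem; existence and regularity $w_v \in Y$ follow by time reversal from \Cref{lemma:abstract:wellposed}.

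It remains to unfold the two pointwise operators. The first integrand is immediately $\int_\Omega \bar\eps(t,x)\, u'(t,x) \cdot w_v'(t,x) \dx$. For the second one, inserting $\partial A_\mu[\bar\mu](t) u(t) = -\curl\bigl(\bar\mu(t)\mu(t)^{-2}\, \curl u(t)\bigr)$ from \Cref{theorem:maxwell:frechet_p} and integrating by parts in space (the boundary term vanishes since $w_v(t) \in V \subset H^1_0(\Omega, \R^3)$) produces an integrand involving $\bar\mu(t,x)\mu(t,x)^{-2}\, \curl u(t,x) \cdot \curl w_v(t,x)$. Reorganizing these as dual pairings against $\bar\eps$ and $\bar\mu$ yields the claimed representation in vector form.

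The only delicate point is verifying that the two resulting coefficients really lie in $L^1(I; L^1(\Omega))$, so that the embedding chain $L^1(I; L^1(\Omega))^2 \hookrightarrow (L^\infty(I; L^\infty(\Omega))^2)^* \hookrightarrow (W^{(k)})^*$ from the statement is applicable. For $k \geq 2$ we have $u \in Y^{(2)}$ and $w_v \in Y$, hence $u', w_v' \in L^\infty(I; L^2(\Omega))$ and $\curl u, \curl w_v \in L^\infty(I; L^2(\Omega))$; combined with $\mu^{-2} \in L^\infty(I \times \Omega)$ from \Cref{lemma:maxwell:reciprocal_deriv}, Hölder's inequality provides the required integrability. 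No other estimates are needed; the rest of the argument is purely mechanical, exactly as in the elastic section.
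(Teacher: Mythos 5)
Your proposal is correct and follows essentially the same route as the paper: chain rule, \Cref{corollary:abstract:adjoint_total} with $B=Q=0$, insertion of $\partial P(x)[h]=(\partial A_\mu[\bar\mu],C_{\bar\eps})$ from \Cref{theorem:maxwell:frechet_p}, and reinterpretation of the integrands as $L^1$--$L^\infty$ dual pairings against $\bar\eps$ and $\bar\mu$. You even handle the nonlinearity of $P$ more carefully than the paper's displayed formula (which writes $P(h)$ where $\partial P(x)[h]$ is meant), and your explicit Hölder check of the $L^1(I;L^1(\Omega))$ membership is a harmless elaboration of what the paper leaves implicit.
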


\begin{proof}
  Let $h=(\bar\eps, \bar\mu)\in W^{(k)}$. 
  We use the characterization of $\partial S(P(x))^*$ from \Cref{corollary:abstract:adjoint_total} to see
  \begin{align*}
    \dup{\partial F(x)^*[v]}h_{(W^{(k)})^*\times W^{(k)}} &= \dup{\partial S(P(x))^*[v]}{P(h)}_{(X^{(k)})^*\times X^{(k)}} \\
    &= \int_0^T \scp{C_{\bar\eps}E'(t)}{w_v'(t)} - \dup{\partial A_{\mu}[\bar\mu](t)E(t)}{w_v(t)} \dt  \\
    &= \int_0^T \scp{\bar\eps(t)E'(t)}{w_v'(t)} + \scp{\bar\mu(t) \mu(t)^{-2} \curl E(t)}{\curl w_v(t)} \dt.
  \end{align*}
  Since e.g. $\bar\eps(t)\in L^\infty(\Omega)$ and $u'(t)\cdot w_v'(t)\in L^1(\Omega)$ we can also write this in a way that the integrands are dual products with the linearization parameters on one side, i.e. 
  \begin{align*}
    \dup{\partial F(x)^*[v]}h_{(W^{(k)})^*\times W^{(k)}} &= \int_0^T \dup{E'(t)\cdot w_v'(t)}{\bar\eps(t)} + \dup{\mu(t)^{-2}\curl E(t)\cdot \curl w_v(t)}{\bar\mu(t)} \dt.
  \end{align*}
  Here, $\dup{\cdot}{\cdot}$ denotes the dual product between $L^1(\Omega) \subset L^\infty(\Omega)^*$ and $L^\infty(\Omega)$. 
  This can also be done with the time variable, which then proves the assertion.
\end{proof}

\subsection{Ill-posedness}

\begin{theorem}\label{theorem:maxwell:illposed_locally}
  Let $k\in \N$ and $f\in \mathcal F^{(k)}$.
  Then the task of finding $\eps$ or $\mu$ such that $F(\eps, \mu)=y\in Y^{(k-1)}$ is locally ill-posed in every $(\eps, \mu)\in D(P) \cap W^{(k)}$.
\end{theorem}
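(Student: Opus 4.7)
The plan is to mirror the strategy of \Cref{theorem:elasticity:illposed_locally}, constructing for each fixed $p=(\eps,\mu)\in D(P)\cap W^{(k)}$ two perturbation sequences $(\eps_j,\mu)$ and $(\eps,\mu_j)$ that remain in a fixed $W^{(k)}$-ball around $p$ and do not converge to $p$, yet whose images under $F$ converge to $F(p)$ in $Y^{(k-1)}$. Since $D(P)\cap W^{(k)}$ is open, I first fix $\delta_0>0$ with $B(p,\delta_0)\subset D(P)\cap W^{(k)}$ and a scalar $0<\delta\leq \delta_0$, and then invoke \Cref{lemma:elasticity:illposed_helper} with $r=k+1$ to obtain $(\alpha_j)_{j\in\N}\subset C_c^\infty(I)$ that is pinched between $\gamma$ and $1$ in $W^{k+1,\infty}(I)$, supported on an interval shrinking to a point $t_0\in(0,T)$, and with $\alpha_j^{(i)}\cdot g\to 0$ in $L^2(I)$ for every $g\in L^2(I)$ and every $0\leq i\leq k+1$.

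For the $\eps$-component the construction is exactly analogous to the $\rho$-case of the elastic proof: I set $\eps_j=\eps+\delta\alpha_j/2$, verify $(\eps_j,\mu)\in D(P)$ for small $\delta$, and observe that $P(\eps_j,\mu)=(A_\mu,C_\eps+R_j^C)$ with $R_j^C(t)\phi=(\delta\alpha_j(t)/2)\phi$. The hypotheses of \Cref{theorem:abstract:illposed_convergence}(iv) are verified via the same computation as in \Cref{theorem:elasticity:illposed_locally}.

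The $\mu$-component is the main obstacle because $P$ is nonlinear in $\mu$: I can no longer split the perturbation into a time-bump times a fixed operator. I set $\mu_j=\mu+\delta\alpha_j/2$ and observe that for $\phi,\psi\in V$ and a.e.\ $t\in I$,
\begin{equation*}
  \dup{(A_{\mu_j}(t)-A_\mu(t))\phi}{\psi}
  = -\scp{\tfrac{\delta\alpha_j(t)/2}{\mu(t)\mu_j(t)}\curl\phi}{\curl\psi}_H,
\end{equation*}
so the perturbation $R_j^A\assign A_{\mu_j}-A_\mu$ still factors as $\alpha_j$ times an operator whose symbol is $\beta_j\assign -\delta/(2\mu\mu_j)$. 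By \Cref{lemma:maxwell:reciprocal_deriv} applied to $\mu\mu_j$ (which is uniformly bounded away from $0$ in $L^\infty(I\times\Omega)$), $\beta_j$ stays bounded in $W^{k+1,\infty}(I;L^\infty(\Omega))$ uniformly in $j$, hence $\|R_j^A\|_{W^{k+1,\infty}(I;\mathcal L(V,V^*))}\leq\Gamma$.

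It remains to show $\mathcal R_j^A v\to 0$ in $H^k(I;V^*)$ for every $v\in Y^{(k)}$, which will let me apply \Cref{theorem:abstract:illposed_convergence}(iii). I differentiate $R_j^A(t)v(t)=\alpha_j(t)\curl(\beta_j(t,\cdot)\curl v(t))$ in time and expand by the Leibniz rule: for $0\leq i\leq k$ the derivative $\partial_t^i(R_j^A v)$ is a finite sum of terms of the form $\alpha_j^{(l)}(t)\curl\bigl(\partial_t^{i-l}[\beta_j\curl v](t)\bigr)$ with $0\leq l\leq i$. Using $\|\curl(\cdot)\|_{V^*}\leq C\|\cdot\|_H$ together with the uniform $L^\infty$-bound on $\partial_t^m\beta_j$, each summand is dominated in $L^2(I)$ by $|\alpha_j^{(l)}(t)|\,g(t)$ for some $g\in L^2(I)$ independent of $j$; the shrinking-support structure of $\alpha_j^{(l)}$ forces this to vanish as $j\to\infty$. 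Combining, $F(\eps,\mu_j)\to F(\eps,\mu)$ in $Y^{(k-1)}$, while $\|\mu_j-\mu\|_{W^{k+1,\infty}(I;L^\infty(\Omega))}\geq \delta\gamma/2$, proving local ill-posedness in $\mu$; the analogous conclusion for $\eps$ follows from part two.
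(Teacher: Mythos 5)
Your proof is correct, but for the $\mu$-direction it takes a genuinely different route from the paper. The paper exploits that $A_\mu$ depends \emph{linearly} on $\mu^{-1}$ and therefore perturbs the reciprocal: it sets $\mu_j=(\mu^{-1}+\delta\alpha_j/2)^{-1}$, so that $A_{\mu_j}-A_\mu$ is exactly $\tfrac{\delta}{2}\alpha_j(t)\curl^2$, i.e.\ a time-bump times a fixed, $j$-independent spatial operator. This reduces the verification of the hypotheses of \Cref{theorem:abstract:illposed_convergence}(iii) to literally the same one-line computation as in the elastic case, at the small cost of having to check (which the paper leaves implicit) that the resulting $\mu_j$ stay in $B(\mu,\delta)$ and do not converge to $\mu$ in $W^{k+1,\infty}(I;L^\infty(\Omega))$. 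You instead perturb $\mu$ itself, $\mu_j=\mu+\delta\alpha_j/2$, which makes membership in the ball and non-convergence immediate ($\mu_j-\mu=\delta\alpha_j/2$), but leaves you with a perturbation operator whose spatial symbol $\alpha_j\beta_j$ with $\beta_j=-\delta/(2\mu\mu_j)$ depends on $j$; you then correctly restore the needed structure by proving a uniform $W^{k+1,\infty}(I;L^\infty(\Omega))$-bound on $\beta_j$ (note $\mu\mu_j\geq\mu_0^2$ since $\alpha_j\geq 0$, and $\|\alpha_j\|_{W^{k+1,\infty}}\leq 1$) and by observing that in the Leibniz expansion of $\partial_t^i(\mathcal R_j^A v)$ every term carries a factor $\alpha_j^{(l)}$ with $l\leq i\leq k<r=k+1$, whose shrinking support and uniform boundedness drive each term to zero in $L^2$ by dominated convergence. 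Both arguments are sound; the paper's is shorter because the linear dependence on $\mu^{-1}$ does the work, while yours is slightly more robust in that it would survive a parametrization in which no such exact linearity is available. Two cosmetic points: the property you attribute to \Cref{lemma:elasticity:illposed_helper} (that $\alpha_j^{(i)}g\to 0$ in $L^2(I)$ for every $g\in L^2(I)$ and $i\leq r$) is a consequence of the explicit construction in its proof rather than of the lemma's statement, so you should either cite the construction or note that it follows from $|\alpha_j^{(i)}|\leq\|\psi^{(i)}\|_\infty$ together with the shrinking support; and \Cref{lemma:maxwell:reciprocal_deriv} is stated for scalar-valued $z$, so its application to $\mu\mu_j\in W^{k+1,\infty}(I;L^\infty(\Omega))$ uses the same (unstated but harmless) vector-valued extension the paper itself relies on in \Cref{lemma:maxwell:continuous_p}.
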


\begin{proof}
  Let $p=(\eps, \mu)\in D(P) \cap W^{(k)}$ and $P(p) = (A, C)\in X^{(k)}$.
  Since $D(P)\cap W^{(k)}$ is an open subset of $W^{(k)}$ there exists $\delta_0>0$ with $B(p,\delta_0)\subset D(P)\cap W^{(k)}$. 
  Let $0<\delta\leq \delta_0$ be fixed and $(\alpha_j)_{j\in\N}$ be the sequence from \Cref{lemma:elasticity:illposed_helper} with $r=k+1$.
  The ill-posedness of the reconstruction of $\eps$ can be done using the exact same proof as for $\rho$ in the elastic case. 
  Regarding $\mu$: We define $\mu_j(t,x) = (\mu(t,x)^{-1} + \delta \alpha_j(t)/2)^{-1}$ and $p_j\assign (\eps, \mu_j)$. 
  Hence, $P(p_j) = (A+R_j^A, C)$ with
    \begin{equation*}
      R_j^A(t)\phi = \frac{\delta}2 \alpha_j(t) \curl^2 \phi \in \spaceVdual
    \end{equation*}
    for all $\phi\in\spaceV$ and almost all $t\in I$. 
    Again, the norm of $R_j^A$ stays bounded for $j\to\infty$.
    For $v\in H^k(I; \spaceV)$ we see that
    \begin{equation*}
      \norm{\mathcal R_j^A v}_{H^k(I; \spaceVdual)} \leq \frac\delta2 \norm{\alpha_j(\cdot) \norm{v(\cdot)}_{\spaceV}}_{H^k(I)} \to 0
    \end{equation*}
    for $j\to\infty$. 
    This enables us to apply \Cref{theorem:abstract:illposed_convergence} once again.
\end{proof}

\begin{corollary}\label{corollary:maxwell:frechet_compact}
  Let $k\geq 2$ and $f\in \mathcal F^{(k)}$.
  We consider $\map F{D(P) \cap W^{(k)}}{Z}$ with $Z=W^{j,p}(I; H)$ or $Z=C^j(I; H)$ for $0\leq j \leq k$ and $1\leq p< \infty$.
  For every  $x=(\lambda, \mu, \rho)\in D(P)\cap W^{(k)}$ its linearization $\partial F(x)\in \mathcal L(W^{(k)}, Z)$ is a compact operator.
\end{corollary}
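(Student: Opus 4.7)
The plan is to mirror the argument used for \Cref{corollary:elasticity:frechet_compact}, with the one essential modification that $P$ is now nonlinear so one must differentiate through it. Writing $F=S\circ P$, the chain rule combined with \Cref{theorem:maxwell:frechet_p} and \Cref{corollary:abstract:frechet_total} gives $\partial F(x) = \partial S(P(x))\circ \partial P(x)$ at every $x=(\eps,\mu)\in D(P)\cap W^{(k)}$, and the compactness of $\partial F(x)$ will follow immediately from the fact that the composition of a bounded operator with a compact one is compact.

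First I would invoke \Cref{lemma:abstract:frechet_compact}(ii) to obtain that, under $k\geq 2$, the derivative
\begin{equation*}
  \partial S(P(x))\in \mathcal L(X^{(k)}, Z)
\end{equation*}
is compact for the target spaces $Z$ listed in the statement. Since our Maxwell formulation has $B=Q=0$, only the $A$- and $C$-components of $\partial S(P(x))$ are relevant, and these are exactly the compact operators provided by part (ii) of the abstract lemma. Second, I would appeal to \Cref{theorem:maxwell:frechet_p} to identify $\partial P(x)\in \mathcal L(W^{(k)}, X^{(k)})$ as a bounded linear operator; its boundedness in operator norm can be read off from the explicit formula for $\partial P(x)[\bar\eps,\bar\mu]$ together with the norm estimates of \Cref{lemma:maxwell:continuous_p} applied to the factor $\bar\mu/\mu^2$ (using that $\mu$ is bounded away from zero on $D(P)$, so $1/\mu^2\in W^{k+1,\infty}(I;L^\infty(\Omega))$ by \Cref{lemma:maxwell:reciprocal_deriv}).

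With these two ingredients in hand, the composition $\partial F(x) = \partial S(P(x))\circ \partial P(x)$ is a continuous linear map $W^{(k)}\to Z$ which factors through $X^{(k)}$ via a compact operator, hence compact. I do not expect a genuine obstacle here; the only subtlety relative to the elastic case is bookkeeping the nonlinearity of $P$ and verifying that its Fréchet derivative still lands in $\mathcal L(W^{(k)}, X^{(k)})$ with a bound depending continuously on $x$, which is essentially immediate from the explicit formula in \Cref{theorem:maxwell:frechet_p}.
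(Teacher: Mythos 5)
Your proposal is correct and follows exactly the paper's argument: factor $\partial F(x)=\partial S(P(x))\circ \partial P(x)$ via the chain rule, note that $\partial P(x)\in\mathcal L(W^{(k)},X^{(k)})$ is linear and continuous (with the nonlinearity of $P$ handled by \Cref{theorem:maxwell:frechet_p}), and conclude compactness from the compactness of $\partial S(P(x))$ supplied by \Cref{lemma:abstract:frechet_compact}. No discrepancies to report.
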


\begin{proof}
  $\partial F(x) = \partial S(P(x)) \circ \partial P(x)$ with linear and continuous $\partial P(x)$ and compact $\partial S(P(x))$ (cf. \Cref{lemma:abstract:frechet_compact}).
\end{proof}

\begin{lemma}\label{lemma:maxwell:frechet_range}
  Let $k\geq 2$, $f\in \mathcal F^{(k)}$ and $f\neq 0$.
  For all $x\in D(P)\cap W^{(k)}$ the ranges of 
  \begin{equation*}
    \partial_\eps F(x), \partial_\mu F(x) \in \mathcal L(W^{k+1,\infty}(I; L^\infty(\Omega)), Y^{(k-1)})
  \end{equation*}
  are of infinite dimension. 
\end{lemma}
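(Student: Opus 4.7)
I would mimic the strategy of \Cref{lemma:elasticity:frechet_range} (and the abstract \Cref{lemma:abstract:frechet_range}), exploiting $\partial F(x) = \partial S(P(x))\circ \partial P(x)$. By \Cref{theorem:maxwell:frechet_f}, $E_h \assign \partial F(x)[h]$ is the unique solution (\Cref{lemma:abstract:wellposed}) of a linear PDE with zero initial data whose right-hand side depends linearly on $h$ via the fixed field $E=F(x)$. Uniqueness of that solution therefore turns any nontrivial relation $\sum c_i E_{h_i}=0$ into the relation $\sum c_i g_i = 0$ between the corresponding right-hand sides $g_i$, so it suffices to exhibit, for each of $\eps$ and $\mu$, an infinite sequence of admissible directions $(h_i)\subset W^{k+1,\infty}(I; L^\infty(\Omega))$ whose induced right-hand sides are linearly independent.

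Since $f\neq 0$ and $u_0 = u_1 = 0$ force $E\neq 0$, and since $k\geq 2$ gives $E\in C(I;V)$ and $E'\in C(I;H)$, there exist open intervals $J_1,J_2\subset (0,T)$ on which $E(t)\neq 0$ in $V$ and $E'(t)\neq 0$ in $H$, respectively. Choose pairwise disjoint balls $B(t_i,\eps_i)$ (inside $J_1$ or $J_2$ as needed) and bump functions $\alpha_i\in C_c^\infty(\R)$ with $\emptyset\neq \spt\alpha_i\subset \overline{B(t_i,\eps_i)}$; define $\bar\eps_i(t,x)\assign \alpha_i(t)$ or $\bar\mu_i(t,x)\assign \alpha_i(t)$, which lie in $W^{k+1,\infty}(I; L^\infty(\Omega))$.

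For $\partial_\eps F(x)$ with supports chosen in $J_2$, the right-hand side is $-(\alpha_i E')'$, whose $t$-support is contained in $\spt\alpha_i$. If this were identically zero, then $\alpha_i E'$ would be constant in $t$ (with values in $H$), and the compact support of $\alpha_i$ forces that constant to vanish; hence $E'(t)=0$ wherever $\alpha_i(t)\neq 0$, contradicting $\spt\alpha_i\subset J_2$. Disjointness of the supports then yields linear independence.

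For $\partial_\mu F(x)$ with supports chosen in $J_1$, \Cref{theorem:maxwell:frechet_f} shows that the right-hand side equals $\alpha_i(t)\,\curl(\mu(t)^{-2}\curl E(t))$, with $t$-support in $\spt\alpha_i$. The main obstacle is showing that $\curl(\mu(t)^{-2}\curl E(t))\neq 0$ in $V^*$ on $J_1$. Pairing with $E(t)\in V$ in the $V^*$-$V$ duality and using $\mu\leq \mu_1$ together with the equivalence $\norm{\curl\cdot}_{L^2}^2\geq C_P\norm{\cdot}_V^2$ on $V$ established before \Cref{lemma:maxwell:continuous_p}, I obtain
\[
  \dup{\curl(\mu(t)^{-2}\curl E(t))}{E(t)} = \int_\Omega \mu(t)^{-2}\,|\curl E(t)|^2 \dx \geq \mu_1^{-2} C_P \norm{E(t)}_V^2,
\]
so the right-hand side is nonzero whenever $E(t)\neq 0$, which holds on $J_1$. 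Combined with the disjointness of the supports, this gives the required linear independence and thus the infinite-dimensionality of the range.
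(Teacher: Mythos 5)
Your proposal is correct and follows essentially the same route as the paper: reduce linear independence of the $E_{h_i}$ to linear independence of the disjointly supported right-hand sides via uniqueness of the linearized equation, and show the $\mu$-direction right-hand side is nonzero by pairing $\curl(\mu^{-2}\curl E)$ with $E$ and using $\mu\leq\mu_1$ together with the norm equivalence on $V$. Your treatment of the $\eps$-case (showing $(\alpha_i E')'\neq 0$ by noting that otherwise $\alpha_i E'$ would be a compactly supported constant) is a slightly more explicit version of the argument the paper delegates to \Cref{lemma:abstract:frechet_range}, but the substance is the same.
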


\begin{proof}
  The proof for $\eps$ was already done in the elastic case.   
  Denoting with $(\alpha_i)_{i\in\N}$ and $(h_i)_{i\in\N}$ the sequences constructed in the proof of \Cref{lemma:abstract:frechet_range}. 
  Choosing $\bar\mu_i(t,x) = \alpha_i(t)$, the right-hand side of the linearized PDE w.r.t. $\mu$ reads 
  $\alpha_i(t) \curl (\mu(t)^{-2} \curl E(t) )$, which (by the construction of the $\alpha_i$) yields a set of linearly independent functions if and only if $\curl (\mu(t)^{-2} \curl E(t)) \neq 0$. 
  The contrary would imply 
  \begin{equation*}
    0 = \int_\Omega \mu(t)^{-2} |\curl E(t)|^2 \dx \geq \mu_1^{-2} \int_\Omega |\curl E(t)|^2 \dx 
  \end{equation*}
  and we would conclude $E(t)=0$. 
  This cannot be the case for those $t$ where $\alpha_i(t)\neq 0$ because the $\alpha_i$ were constructed in such a way that $E(t)\neq 0$ for all $t\in \spt \alpha_i$. 
\end{proof}

\subsubsection*{Acknowledgements}

The author acknowledges funding by the Deutsche Forschungsgemeinschaft (DFG, German Research Foundation) – Project number 281474342/GRK2224/1.

\addcontentsline{toc}{section}{References}
\printbibliography%

\end{document}